\theoremstyle{plain}
\newtheorem{theorem}{Theorem}
\newtheorem{lemma}{Lemma}
\newtheorem{corollary}{Corollary}
\theoremstyle{remark}
\newtheorem{remark}{Remark}
\theoremstyle{definition}
\newtheorem*{acknowledgement}{Acknowledgement}
\newtheorem*{convention}{Convention}
\numberwithin{equation}{section}
\renewcommand{\geq}{\geqslant}
\renewcommand{\leq}{\leqslant}
\DeclareMathOperator{\PSL}{PSL}
\DeclareMathOperator{\GL}{GL}
\DeclareMathOperator{\SL}{SL}
\DeclareMathOperator{\SO}{SO}
\DeclareMathOperator{\oO}{O}
\DeclareMathOperator{\U}{U}
\DeclareMathOperator{\Z}{Z}
\DeclareMathOperator{\gP}{P}
\DeclareMathOperator{\M}{M}
\DeclareMathOperator{\sgn}{sgn}
\DeclareMathOperator{\vol}{vol}
\DeclareMathOperator{\re}{Re}
\DeclareMathOperator{\im}{Im}
\DeclareMathOperator{\sech}{sech}
\DeclareMathOperator{\height}{ht}
\DeclareMathOperator{\tr}{tr}
\DeclareMathOperator{\Gal}{Gal}
\newcommand{\eps}{\varepsilon}
\newcommand{\fin}{_{\text{fin}}}
\newcommand{\ov}[1]{\overline{#1}}
\newcommand{\abs}[1]{\left| #1 \right|}
\newcommand{\rest}{\!\!\restriction}
\newcommand{\RR}{\mathbb{R}}
\newcommand{\CC}{\mathbb{C}}
\newcommand{\QQ}{\mathbb{Q}}
\newcommand{\ZZ}{\mathbb{Z}}
\newcommand{\NN}{\mathbb{N}}
\newcommand{\HH}{\mathbb{H}}
\newcommand{\Aa}{\mathbb{A}}
\newcommand{\MM}{\mathbb{M}}
\newcommand{\FF}{\mathcal{F}}
\newcommand{\Hh}{\mathcal{H}}
\newcommand{\N}{\mathcal{N}}
\newcommand{\Hb}{\bm{\mathcal{H}}}
\newcommand{\ib}{\bm{i}}
\newcommand{\Hs}{\mathscr{H}}
\newcommand{\ma}{\mathfrak{a}}
\newcommand{\mb}{\mathfrak{b}}
\newcommand{\md}{\mathfrak{d}}
\newcommand{\mk}{\mathfrak{k}}
\newcommand{\ml}{\mathfrak{l}}
\newcommand{\mm}{\mathfrak{m}}
\newcommand{\mn}{\mathfrak{n}}
\newcommand{\mo}{\mathfrak{o}}
\newcommand{\mpr}{\mathfrak{p}}
\newcommand{\mq}{\mathfrak{q}}
\newcommand{\bj}{\mathbf{j}}
\newcommand{\bk}{\mathbf{k}}
\newcommand{\bl}{\mathbf{l}}
\newcommand{\bn}{\mathbf{n}}
\newcommand{\bp}{\mathbf{p}}
\newcommand{\bq}{\mathbf{q}}
\newcommand{\bz}{\mathbf{z}}
\newcommand{\SLZ}{\SL_2(\ZZ)}
\newcommand{\SLR}{\SL_2(\RR)}
\newcommand{\GLR}{\GL_2(\RR)}
\newcommand{\GLC}{\GL_2(\CC)}
\newcommand{\SLZH}{{\SLZ\backslash\Hh^2}}
\newcommand{\abcd}{\left(\begin{matrix}a&b\\c&d\end{matrix}\right)}
\newcommand{\sabcd}{\bigl(\begin{smallmatrix}a&b\\c&d\end{smallmatrix}\bigr)}
\newcommand{\yx}{\left(\begin{matrix}y&x\\&1\end{matrix}\right)}
\newcommand{\syx}{\bigl(\begin{smallmatrix}y&x\\&1\end{smallmatrix}\bigr)}
\newcommand{\yxt}{\left(\begin{matrix}\tilde y&\tilde x\\&1\end{matrix}\right)}
\newcommand{\syxt}{\bigl(\begin{smallmatrix}\tilde y&\tilde x\\&1\end{smallmatrix}\bigr)}
\newcommand{\ts}{\left(\begin{matrix}t&s\\&1\end{matrix}\right)}
\newcommand{\sts}{\bigl(\begin{smallmatrix}t&s\\&1\end{smallmatrix}\bigr)}
\newcommand{\tst}{\left(\begin{matrix}\tilde t&\tilde s\\&1\end{matrix}\right)}
\newcommand{\stst}{\bigl(\begin{smallmatrix}\tilde t&\tilde s\\&1\end{smallmatrix}\bigr)}
\begin{document}

\author{Valentin Blomer}
\author{Gergely Harcos}
\author{P\'eter Maga}
\author{Djordje Mili\'cevi\'c}

\address{Mathematisches Institut, Bunsenstr. 3-5, D-37073 G\"ottingen, Germany}\email{vblomer@math.uni-goettingen.de}
\address{Alfr\'ed R\'enyi Institute of Mathematics, Hungarian Academy of Sciences, POB 127, Budapest H-1364, Hungary}\email{gharcos@renyi.hu, magapeter@gmail.com}
\address{MTA R\'enyi Int\'ezet Lend\"ulet Automorphic Research Group}\email{gharcos@renyi.hu, magapeter@gmail.com}
\address{Central European University, Nador u. 9, Budapest H-1051, Hungary}\email{harcosg@ceu.edu}
\address{Bryn Mawr College, Department of Mathematics, 101 North Merion Avenue, Bryn Mawr, PA 19010, USA}\email{dmilicevic@brynmawr.edu}

\title{The sup-norm problem for $\GL(2)$ over number fields}

\thanks{First author supported in part by the Volkswagen Foundation and NSF grant DMS-1128155 while enjoying the hospitality of the Institute for Advanced Study. Second and third author supported by NKFIH (National Research, Development and Innovation Office) grants NK~104183, ERC\underline{\phantom{ }}HU\underline{\phantom{ }}15~118946, K~119528, and by the MTA R\'enyi Int\'ezet Lend\"ulet Automorphic Research Group. Second author also supported by NKFIH grant K~101855 and ERC grant AdG-321104. Third author also supported by the Premium Postdoctoral Fellowship of the Hungarian Academy of Sciences. Fourth author supported by NSF grant DMS-1503629 and ARC grant DP130100674, and he thanks the Max Planck Institute for Mathematics for their support and exceptional research infrastructure.}

\keywords{sup-norm, automorphic form, arithmetic manifold, amplification, pre-trace formula, diophantine analysis, geometry of numbers}

\begin{abstract}
We solve the sup-norm problem for spherical Hecke--Maa{\ss} newforms of square-free level for the group $\GL(2)$ over a number field, with a power saving over the local geometric bound simultaneously in the eigenvalue and the level aspect. Our bounds feature a Weyl-type exponent in the level aspect, they reproduce or improve upon all known special cases, and over totally real fields they are as strong as the best known hybrid result over the rationals.
\end{abstract}

\subjclass[2000]{Primary 11F72, 11F55, 11J25}

\setcounter{tocdepth}{2}

\maketitle

\section{Introduction}

\subsection{The sup-norm problem} The sup-norm problem has taken a prominent position in recent years at the interface of automorphic forms and analytic number theory. It is inspired by a classical question in analysis about comparing two norms on an infinite-dimensional Hilbert space: given an eigenfunction $\phi$ on a locally symmetric space $X$ with a large Laplace eigenvalue $\lambda$ and $\|\phi\|_2=1$, what can be said about its sup-norm $\|\phi\|_\infty$?

This question is closely connected to the multiplicity of eigenvalues \cite{Sa1}, and it is motivated by the correspondence principle of quantum mechanics, where the high energy limit $\lambda \rightarrow \infty$ provides a connection between classical and quantum mechanics. The sup-norm of an eigenform with large eigenvalue gives some information on the distribution of its mass on $X$, which sheds light on the question to what extent these eigenstates can localize (``scarring''). Despite a lot of work from different points of view, in the case of a classically chaotic Hamiltonian (for instance when $X$ is a compact hyperbolic manifold), the relation between the
classical mechanics and the quantum mechanics in the semi-classical limit is currently not well-understood. This goes by the name \emph{quantum chaos}. We refer the reader to the excellent surveys \cite{Sa3,Sa2} and the references therein for an introduction to this topic and further details.

Purely analytic techniques can be used to give a best-possible solution to the sup-norm problem on a general compact locally symmetric space $X$ of dimension $d$ and rank $r$: one has \cite{Sa1}
\begin{equation}\label{generic}
\|\phi\|_{\infty} \ll_X \lambda^{(d-r)/4},
\end{equation}
and this bound is sharp as it is attained, for instance, for the round sphere. (The symbol $\ll$ is introduced formally at the end of Section~\ref{section1}.) The bound is local in nature, in that its proof is insensitive to the global geometry of $X$, and in general it still allows for significant concentration of mass at individual points. In many cases, in particular for compact hyperbolic manifolds, a stronger bound is expected. The \emph{sup-norm problem} aims at decreasing the exponent in \eqref{generic} or in a refined version thereof.

The beauty of the sup-norm problem lies in particular in the fact that it is amenable to \emph{arithmetic} techniques when the manifold is equipped with additional arithmetic structure. Two classical examples in dimension $2$ are the round sphere $X = \mathcal{S}^2 = \SO_3(\RR)/\SO_2(\RR)$, realized as a quotient of the projective group of units in the Hamilton quaternions, and
the modular surface $X=\SLZH$, where $\Hh^2$ denotes the Poincar\'e upper half-plane of complex numbers on which $\SLR$ acts by hyperbolic isometries. In both cases, there is an arithmetically defined family of Hecke operators commuting with the Laplacian, so that it makes sense to consider joint eigenfunctions. A combination of analytic and arithmetic techniques led to a significant improvement of \eqref{generic} for joint Hecke--Laplace eigenfunctions on these two arithmetic surfaces~\cite{IS, VdK}: namely,
\[\|\phi\|_{\infty} \ll_{\eps} \lambda^{5/24 +\eps}\]
holds for every $\eps > 0$. For applications, for instance in connection with Faltings' delta function \cite{JK1, JK2, JK3}, it is also important to consider the dependence of the implied constant in \eqref{generic} with respect to $X$, in particular as $X$ varies through a sequence of covers. A typical situation is the case of the congruence covers
\begin{equation}\label{congruencecover}
\Gamma_0(N)\backslash\Hh^2 \rightarrow \SLZH,
\end{equation}
where $\Gamma_0(N)$ is the usual Hecke congruence subgroup consisting of $2\times 2$ integral unimodular matrices with lower left entry divisible by $N$.

Following the original breakthrough of Iwaniec and Sarnak~\cite{IS}, a lot of effort went into proving good upper bounds (and also lower bounds, but this is not the focus of the present paper) for joint eigenfunctions, in a great variety of situations and with various applications in mind: see for instance \cite{BHM, BHo, BM, BP, BMa, DS, HT1, HT2, HRR, Ki, Mar, Sa, St, Te, X}. The results fall roughly into two categories. On the one hand, one can try to establish bounds as strong as possible. Somewhat reminiscent of the \emph{subconvexity problem} in the theory of $L$-functions, this often leads eventually to a ``natural'' exponent that marks the limit of techniques of analytic number theory. For example, in the case of the congruence cover \eqref{congruencecover} for $N$ a square-free integer, Templier~\cite{Te} proved the important benchmark result
\begin{equation}\label{templier}
\|\phi\|_{\infty} \ll_\eps \lambda^{5/24 +\eps}N^{1/3+\eps},
\end{equation}
improving simultaneously\footnote{See also Remark~\ref{rem4} in Subsection~\ref{parabolicsection}.} in both aspects on the generic local bound $\lambda^{1/4}N^{1/2}$. On the other hand, one can also confine oneself to some small numerical improvement over the trivial bound, but use techniques that work on very general spaces $X$. Here the most general available result is due to Marshall~\cite{Mar} for semisimple split Lie groups over totally real fields and their totally imaginary quadratic extensions (CM-fields).

\subsection{General number fields} In this paper, we address both points of view, and for the first time we address the sup-norm problem for the group $\GL_2$ over a general number field $F$ of degree $n = r_1 + 2r_2$ over $\QQ$, with $r_1$ real embeddings and $r_2$ conjugate pairs of complex embeddings. From the perspective of automorphic forms, this is certainly the natural framework, and there is little reason to treat the ground field $\QQ$ separately. The underlying manifold is then a quotient of the product of $r_1$ copies of the upper half-plane $\Hh^2$ and $r_2$ copies of the upper half-space $\Hh^3$, so it has dimension $d = 2r_1 + 3r_2$ and rank $r=r_1+r_2$ (cf.\ \eqref{generic}). As is well-known, the passage from $\QQ$ to a general number field introduces two abelian groups, the finite class group and the (except for the imaginary quadratic case) infinite unit group. As has been observed in many contexts (e.g.\ in the context of cubic hypersurfaces \cite{BV} and the Ramanujan conjecture \cite{BB}), these groups cause considerable technical difficulties for arguments of analytic number theory; the general strategy is always to use an adelic treatment to deal with issues of the class group and to use carefully chosen units in order to work with algebraic integers whose size is comparable in all archimedean embeddings. Our paper provides a general adelic counting scheme for such situations, see Section~\ref{sec-counting}.

However, in our case the difficulties go much deeper than dealing with the class group and the unit group. As soon as $F$ has a complex place, the formalism of the amplified pre-trace formula leads to counting integral matrices $\gamma\in\M_2(\mo_F)$, which lie suitably close to a certain maximal compact subgroup of $\GL_2(F_\infty)$, and whose entries are described by conditions involving real and imaginary parts at each complex place separately. If $F$ is not a CM-field, there is no global complex conjugation (see e.g.\ \cite{Ok}), and hence the global counting techniques that work over number fields like $\QQ$ or $\QQ(i)$ break down in the general situation. In fact, the maximal compact subgroups of $\GL_2(F_\infty)$ cannot be defined over $F$ unless $F$ is a totally real field or a CM-field.

Another difficulty is signified by a fundamental difference between $\PSL_2(\RR)$ and $\PSL_2(\CC)$. On the one hand, every arithmetic Fuchsian subgroup of $\PSL_2(\RR)$ is commensurable with $\SO^{+}(L)$ for a suitable lattice $L$ in a quadratic space $V$ of signature $(2,1)$, upon identifying $\SO^+(V)$ with $\PSL_2(\RR)$. On the other hand, an arithmetic Kleinian subgroup of $\PSL_2(\CC)$ is commensurable with $\SO^{+}(M)$ for a suitable lattice $M$ in a quadratic space $W$ of signature $(3,1)$, upon identifying $\SO^+(W)$ with $\PSL_2(\CC)$, if and only if it contains a
non-elementary Fuchsian subgroup~\cite[Theorem~10.2.3]{MR}. These special Kleinian subgroups are already known to behave distinctively for the sup-norm problem~\cite{Mil}, and they can be described in terms of the invariant trace fields and quaternion algebras; in particular, their trace field is a quadratic extension of the maximal totally real subfield.

For a general number field $F$, these structural features make the sup-norm problem in many ways a very different problem. Therefore, we introduce a number of new devices into the argument to leverage the specific interplay between the maximal compact subgroups of $\GL_2(F_\infty)$ and the arithmetic of~$F$. In the hardest situation in our counting problem, $F$ is not totally real, and the field element $\xi:=\tr(\gamma)^2/\det(\gamma)$ is bounded in $F_\infty$ and very close to being totally real. In this case, we combine two observations that appear to be novel in this context. On the one hand, we exploit a certain \emph{rigidity of number fields} (see Section~\ref{rigidity}) to show that $\xi$ lies in a \emph{proper} subfield of $F$. However, the denominator of $\xi$ is arithmetically controlled by our specific \emph{amplifier} (see Section~\ref{amplifier}), so $\xi\in F$ must be an algebraic integer. This is already a very strong conclusion when coupled with the boundedness of $\xi$ in $F_{\infty}$; however, except for special number fields $F$, we do not know how to deal with the non-parabolic cases $\xi\neq 4$. On the other hand, by artificially extending the spectrum, we can improve the performance of the pre-trace formula on the geometric side so that $\gamma\in\M_2(\mo_F)$ is also localized modulo some auxiliary ideal $\mq$. Specifically, we can ensure that $\gamma$ is \emph{locally parabolic} modulo $\mq$. As a result, $\xi\in 4+\mq$, which forces $\xi=4$ when the norm of $\mq$ is large. In conclusion, in the hardest situation we can eliminate all but parabolic matrices, which are relatively simple to count. We refer the reader to Lemma~\ref{j=2} for a precise version of this argument, as well as to Lemma~\ref{improvement} for another application of the realness rigidity of number fields.

The precise setup of extending the spectrum and hence localizing $\gamma$ modulo $\mq$ is described in Sections~\ref{section1} and \ref{section2}, with a special view toward treating the units in $\mo_F$ efficiently. Indeed, there is a natural ambiguity of
$\det(\gamma)$ by units modulo squared units, while our congruence conditions force the units that appear here to be quadratic
residues modulo $\mq$; we can choose $\mq$ in such a way that these units are automatically squared units. Thus the success of our method rests on three pillars: passage to a suitably chosen congruence subgroup, a carefully designed amplifier equipped with arithmetic features as described in Subsection~\ref{napart}, and the rigidity results for number fields mentioned above. At the technical level, we rely heavily on Atkin--Lehner operators (see Section~\ref{atkinlehner}) and the geometry of numbers (see Section~\ref{section4}), which allow an efficient counting of the matrices $\gamma$ in Section~\ref{countingmatrices}.

In retrospect, the general idea of extending the spectrum to thin out the geometric side of the pre-trace formula is not unprecedented, the most spectacular example being Iwaniec's approach \cite{Iw1} to the Ramanujan conjecture for the metaplectic group (see also \cite{Sp} for another example). We believe that our variation of it, based on arithmetic properties of a certain congruence subgroup and the underlying number field, introduces a novel and flexible tool into the machinery of the sup-norm problem that may be useful in other situations.

As another useful feature, our argument also uses positivity more strongly than the previous treatments. Rather than carrying out an exact spectral average, we use positivity of our operators to establish a \emph{pre-trace inequality}. This streamlines the argument substantially, e.g.\ we do not even have to mention Eisenstein series and oldforms. A similar idea in the context of infinite volume subgroups was used by Gamburd~\cite{Ga}.

Finally, we mention that in Section~\ref{fourierbound} we develop a uniform Fourier bound for spherical Hecke--Maa{\ss} newforms for the
group $\GL_2$ over a number field, which might be of independent interest.

\subsection{Main results} Our main result is a solution of the sup-norm problem for $\GL_2$ over any number field simultaneously in the eigenvalue and the level aspect, provided the level is square-free. In certain cases, we recover a Weyl-type saving, the strongest bound one can expect with the current technology. To formulate our results, we introduce the tuple
\[\lambda := (\lambda_1, \ldots, \lambda_{r_1}, \lambda_{r_1 + 1}, \ldots, \lambda_{r_1 + r_2})\]
of Laplace eigenvalues at the $r_1$ real places and the $r_2$ complex places, and we write
\begin{equation}\label{lambda-norm}
|\lambda|_{\infty}:=|\lambda|_{\RR}\cdot|\lambda|_{\CC},\qquad
|\lambda|_{\RR}:=\prod_{j=1}^{r_1} \lambda_j, \qquad |\lambda|_{\CC}:=\prod_{j=r_1+1}^{r_1+r_2} \lambda_j^2.
\end{equation}
As usual, empty products are defined to be $1$.
We also denote by $\N\mn$ the norm of an integral ideal $\mn$ (see Section~\ref{section1} for further notation).

In classical language, we are looking at a cusp form $\phi$ on a congruence manifold $X$ (see Section~\ref{section1} for precise definitions). The connected components of $X$ correspond to the ideal classes of $F$: they are left quotients of $(\Hh^2)^{r_1}\times(\Hh^3)^{r_2}$ by $\Gamma_0(\mn)$ and related level $\mn$ subgroups (cf.\ \cite{Sh}). Assuming that $\|\phi\|_2=1$ holds with respect to the \emph{probability measure} coming from invariant measures on $\Hh^2$ and $\Hh^3$, the generic local bound reads
\[\|\phi\|_\infty\ll_{F,\eps}|\lambda|_{\infty}^{1/4+\eps} (\N\mn)^{1/2+\eps}.\]

\begin{theorem}\label{thm2} Let $\phi$ be an $L^2$-normalized Hecke--Maa{\ss} cuspidal newform on $\GL_2$ over $F$ of square-free level~$\mn$ and trivial central character. Suppose that $\phi$ is spherical at the archimedean places. Then for any $\eps>0$ we have
\[\|\phi\|_\infty\ll_{F,\eps}
|\lambda|_{\infty}^{5/24+\eps} (\N\mn)^{1/3+\eps} + |\lambda|_{\RR}^{1/8+\eps} |\lambda|_{\CC}^{1/4+\eps} (\N\mn)^{1/4+\eps}.\]
\end{theorem}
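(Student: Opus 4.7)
The plan is to apply an amplified pre-trace inequality to the spherical Hecke--Maa{\ss} newform $\phi$, evaluated at a point $z_0$ realizing (up to $\eps$) the sup-norm. First I would select a nonnegative test function $k_\infty$ on $\GL_2(F_\infty)$ whose spherical transform concentrates on a neighborhood of the archimedean parameters of $\phi$ and which is localized near the identity in an anisotropic box of size $\sim\lambda_j^{-1/2}$ at the $j$-th archimedean place. Paired with an arithmetic amplifier $T=\sum_\ell \alpha_\ell T_\ell$ of length $L$ supported on Hecke operators at primes of moderate size coprime to $\mn$ (Section~\ref{amplifier}), positivity of the amplified spectral expansion produces a pre-trace inequality of the schematic form
\[L^2\,|\phi(z_0)|^2 \ll \sum_{\gamma}\bigl|(k_\infty\star T)(z_0^{-1}\gamma z_0)\bigr|,\]
where $\gamma$ runs over an arithmetic set coming from the Hecke correspondences of $T$ inside a suitable congruence subgroup.

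Next, following the introduction, I would enlarge the spectrum by replacing $\Gamma_0(\mn)$ with a subgroup of shape $\Gamma_0(\mn)\cap\Gamma_1(\mq)$ for a carefully chosen auxiliary square-free ideal $\mq$ (Section~\ref{napart}). This is essentially free on the spectral side and in return imposes extra mod-$\mq$ congruence conditions on the diagonal entries of the $\gamma$'s that survive on the geometric side. The counting of these $\gamma$ is organized by determinant class modulo squares of units; the $\mq$-conditions force the units appearing in this ambiguity to be quadratic residues mod $\mq$, and the design of $\mq$ guarantees they are actually squares. For each class, an upper bound for the number of admissible $\gamma$ inside the archimedean window is furnished by the adelic counting scheme of Section~\ref{sec-counting}, Atkin--Lehner operators (Section~\ref{atkinlehner}), and the geometry of numbers (Section~\ref{section4}), ultimately via Section~\ref{countingmatrices}.

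The principal obstacle is the ``square-determinant'' contribution, arising from the Hecke identity $\lambda(\mpr)^2=\lambda(\mpr^2)+1$. Here $\tau := \tr(\gamma)/\sqrt{\det(\gamma)}\in F$ has small imaginary part at every archimedean place, since $\gamma/\sqrt{\det(\gamma)}$ has been localized near a maximal compact subgroup of $\GL_2(F_\infty)$. I would then invoke the realness rigidity of number fields (Section~\ref{rigidity}) together with the arithmetic features of the amplifier to confine $\tau$ to $\mo_{\hat K}$ for a proper subfield $\hat K\subset F$ and to force its conjugates to be bounded, leaving only finitely many values of $\tau$ in play. Shrinking the archimedean window further and using the mod-$\mq$ conditions should upgrade this to the equality $\tau=\pm 2$, reducing the square-determinant case to a tractable analysis of genuinely parabolic matrices as in Lemma~\ref{j=2}.

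With the square-determinant case under control, the remaining non-square contribution is counted by the same Atkin--Lehner/geometry-of-numbers package, producing a geometric estimate whose dependence on $L$, on the archimedean localization, and on $\N\mn$ can be tracked explicitly. Choosing $L$ to balance the amplifier diagonal against the off-diagonal yields the Templier-type bound $|\lambda|_\infty^{5/24+\eps}(\N\mn)^{1/3+\eps}$, while the fact that rigidity offers no extra saving at the complex places means that those contribute only with their local bound, producing the alternative summand $|\lambda|_\RR^{1/8+\eps}|\lambda|_\CC^{1/4+\eps}(\N\mn)^{1/4+\eps}$; taking the better of the two in every aspect gives the theorem. The hardest step will be the uniform execution of the square-determinant analysis: the rigidity argument must be calibrated so that it survives the simultaneous eigenvalue/level optimization and interacts cleanly with both the amplifier and the $\mq$-twist.
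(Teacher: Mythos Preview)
Your general architecture is right and matches the paper: amplified pre-trace inequality, the $\mq$-twist to normalize the unit ambiguity in the determinant, Atkin--Lehner reduction to a good fundamental domain, and matrix counting via the geometry of numbers. Where you go astray is in the choice of the key counting input for Theorem~\ref{thm2}.

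The paper proves Theorem~\ref{thm2} \emph{without} the realness-rigidity machinery. In the endgame (Section~\ref{endgame}) the generic counts $M_3(L,1,\delta)$ and $M_3(L,2,\delta)$ are bounded by Lemma~\ref{newlemma}, a purely geometry-of-numbers estimate; the rigidity Lemmata~\ref{improvement} and \ref{j=2} are invoked only for Theorem~\ref{thm3}. Your plan to force $\tau=\tr(\gamma)/\sqrt{\det(\gamma)}=\pm 2$ via Lemma~\ref{aplusd} and the $a\equiv d\pmod{\mq}$ congruence is exactly the Theorem~\ref{thm3} route, and it produces bounds carrying an explicit dependence on $m=[F:K]$ (see the $L^{2m-3}$ and $L^{2m-2}$ terms in the Theorem~\ref{thm3} endgame). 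For $m\geq 3$ that route cannot recover the Weyl-type level exponent $1/3$ of Theorem~\ref{thm2}, and for $m=1$ (totally real $F$) rigidity is vacuous, so your scheme simply does not yield Theorem~\ref{thm2} in general.

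Relatedly, your explanation of the second summand is off. The term $|\lambda|_\RR^{1/8}|\lambda|_\CC^{1/4}(\N\mn)^{1/4}$ does not appear because ``rigidity offers no extra saving at the complex places''; it falls out directly from the middle term $L^4|\delta|_\RR^{1/2}(\N\mn)^{-1/2}$ in Lemma~\ref{newlemma}'s bound for $M_3(L,2,\delta)$, after inserting $|\delta|_\CC\geq|T|_\CC^{-2}$ and taking $|\delta|_\RR$ as large as allowed. So to prove Theorem~\ref{thm2}, replace your rigidity paragraph by the elementary counting of Lemma~\ref{newlemma}; keep the rigidity argument for when you attack Theorem~\ref{thm3}.
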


We emphasize that this result is new with any exponent less than $1/2$ over $\N\mn$, any exponent less than $1/4$ over $|\lambda|_{\RR}$, and for any number field $F$ other than $\QQ$ and $\QQ(i)$ (cf.\ \cite{Te,BHM}). In particular, for totally real fields this is the proper analogue of \eqref{templier}. In view of the above remarks on the difficulties with general number fields, it is remarkable that the methods in the level aspect -- which historically appeared to be the harder parameter -- are flexible enough to produce a Weyl-type exponent in a general setup.

For a general number field $F$, the strength of Theorem~\ref{thm2} in the eigenvalue aspect $|\lambda|_{\infty}$ depends on the relative sizes of $|\lambda|_{\RR}$ and $|\lambda|_{\CC}$. It is particularly strong for totally real fields; for other fields, it fails to solve the sup-norm problem when $|\lambda|_{\CC}$ gets large relative to $|\lambda|_{\RR}$ and $\N\mn$. The next theorem, in which $F_0$ denotes the maximal totally real subfield of $F$, fixes this issues by saving in all aspects for any number field other than a totally real field.

\begin{theorem}\label{thm3} Suppose that $[F:F_0]\geq 2$. Then, under the same assumptions as in Theorem~\ref{thm2}, we have
\[\|\phi\|_\infty\ll_{F,\eps}\bigl(|\lambda|^{1/2}_{\infty}\N\mn\bigr)^{\frac{1}{2}-\frac{1}{8[F:F_0]-4}+\eps}.\]
\end{theorem}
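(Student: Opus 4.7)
My plan is to apply the amplified pre-trace inequality machinery developed earlier in the paper, balancing the amplifier length $L$ against the norm of an auxiliary spectral-extension ideal $\mq$, and exploiting the realness rigidity of $F$ which becomes nontrivial precisely because $[F:K]\geq 2$. I would begin from the pre-trace inequality at a point $z_0$ where $|\phi(z_0)|\asymp\|\phi\|_\infty$, with a localized archimedean test function of spectral scale $R\asymp|\lambda|_\infty^{-1/4}$ and a Hecke amplifier of length $L$ on the enlarged manifold of level $\mn\mq$; this bounds $L^2\|\phi\|_\infty^2$ by a geometric sum over matrices $\gamma$ localized near $z_0$, which I would split into a scalar part, a non-central contribution with $\det(\gamma)$ a perfect square in $\mo_F^{\times}/(\mo_F^{\times})^2$, and a non-central non-square-determinant part.

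The heart of the argument is the square-determinant case: $\tr(\gamma)/\sqrt{\det(\gamma)}\in F$ has small imaginary part at each complex place, and by the realness rigidity of Section~\ref{rigidity} combined with the arithmetic features of the amplifier (Sections~\ref{amplifier} and~\ref{napart}) it must lie in $\mo_{\hat K}$ for a proper subfield $\hat K\subsetneq F$ with bounded conjugates. Here $[F:K]\geq 2$ is essential, since otherwise this rigidity statement is empty. Enlarging $\N\mq$ to sharpen the spectral localization, Lemma~\ref{j=2} forces $\tr(\gamma)/\sqrt{\det(\gamma)}=\pm 2$ by discreteness of the lattice $\mo_{\hat K}$, reducing to the parabolic count handled in Section~\ref{parabolicsection}. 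The non-square-determinant case is controlled via the counting of Section~\ref{countingmatrices} driven by the amplifier's arithmetic properties (the Hecke relation that makes the square case hardest also makes this complementary case comparatively easy).

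Finally I would optimize $L$ and $\N\mq$ against $|\lambda|_\infty$ and $\N\mn$ so as to minimize the resulting bound. The $[F:K]$-dependence enters through the rank of the lattice $\mo_{\hat K}$, which is bounded above by $[F:\QQ]/[F:K]$, and through the corresponding geometry-of-numbers estimates of Section~\ref{section4}: the spectral extension needed to trigger the rigidity collapse depends on how sparse $\mo_{\hat K}$ is, while the cost of that extension depends on how the counting in the non-square case grows with $\N\mq$. The balance between these two effects produces the denominator $8[F:K]-4$ in the final exponent. The main obstacle will be carrying out this optimization uniformly in $F$ and tracking the $[F:K]$-dependence through the counting, together with choosing $\mq$ with the arithmetic properties needed to avoid the unit and quadratic-residue ambiguities highlighted in the introduction, so that the congruence conditions modulo $\mq$ sharpen the geometric sum rather than being neutralized by ambient ambiguity in the determinant.
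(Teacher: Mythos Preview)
Your overall architecture is right (pre-trace inequality on the enlarged space, amplifier, rigidity for the square-determinant case), but there is a genuine misconception about the mechanism, and the proposed optimization would not produce the stated exponent.

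The ideal $\mq$ is \emph{not} a parameter to be optimized: in the paper it is fixed once and for all with $\N\mq\asymp 1$ (see \eqref{sizeq}). Its role is purely arithmetic, not spectral: it forces any unit that is a quadratic residue mod $\mq$ to be a square (so the determinant can be normalized to $m$ exactly), and it supplies the congruence $a-d\in\mq$. In Lemma~\ref{j=2} this congruence is used only to derive $\xi^2-4\in\mq$, which together with $|\xi^2-4|_\infty\ll 1$ rules out the finitely many non-parabolic values of $\xi=\tr(\gamma)/\sqrt{\det\gamma}$ once $\N\mq$ exceeds a fixed constant. Growing $\N\mq$ further buys nothing here, while it would cost a genuine factor of $[K\fin:K^\flat\fin]\gg(\N\mq)^2$ in \eqref{ampbound3}. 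So the sentence ``optimize $L$ and $\N\mq$'' and the claim that the balance between spectral extension and non-square counting yields $8[F:K]-4$ are both off the mark.

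The $[F:K]$-dependence enters not through the rank of $\mo_{\hat K}$ but through the rigidity Lemma~\ref{aplusd}: the discriminant of the degree-$m$ minimal polynomial of $\xi$ over $K$ has $n(m-1)$ archimedean factors, which yields the exponent $8(m-1)$ in \eqref{unlessconclusion} and the powers $L^{2m}$, $L^{2m+1}$ in Lemma~\ref{improvement} (the $j=1$ analogue, which you do not mention but which is equally essential). Feeding these counts into \eqref{simpleampbound} and optimizing $L$ \emph{alone}, with $L\asymp\min\bigl((|T|_\infty\N\mn)^{1/(4m-2)},\,|T|_\infty^{1/(4m-4)}\bigr)$, is what produces $8m-4$. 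Two further ingredients are missing from your outline: the Fourier bound of Lemma~\ref{lemma7} is needed to cover the range where $|y|_\infty$ is large (the amplified bound is only applied on $\FF(\mn)$ with $|y|_\infty$ small), and the final step combines \eqref{thm2final} with the Theorem~\ref{thm2} bound \eqref{thm1stronger} to absorb the secondary term $|T|_\infty^{-1/(8m-8)}$.
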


In the special case $[F:F_0]=2$ this bound reads
\[\|\phi\|_\infty\ll_{F,\eps}|\lambda|_{\infty}^{5/24+\eps}(\N\mn)^{5/12+\eps},\]
so Theorem~\ref{thm3} improves on \cite[Theorems~2--3]{BHM} even in the case $F = \QQ(i)$, and the proof differs substantially in several aspects.
Further, Theorem~\ref{thm3} with any exponent less than $1/4$ over $|\lambda|_{\infty}$ is new for any non-CM-field.
For a sequence of fields with $[F:F_0]\to\infty$, the exponents of $|\lambda|_{\infty}$ and $\N\mn$ degenerate to $1/4$ and $1/2$, respectively, but this defect only impacts the $|\lambda|_{\CC}$-aspect due to the uniform exponents in Theorem~\ref{thm2}. It would be desirable to treat all number fields on equal footing (as was accomplished in other contexts such as \cite{BB,BV}), but for that a new idea (or a completely new method) would be needed to handle more efficiently the difficulties described in the previous subsection.

Recently, Assing~\cite{As} extended Theorems~\ref{thm2} and \ref{thm3} to arbitrary level and central character by combining the ideas of the present paper with the methods of Saha~\cite{Sa}. In Assing's results, the dependence on the Laplace eigenvalues and the square-free part of the level is the same as ours, and this is coupled with a rather good dependence on the (remaining) square part of the level and the conductor of the central character.

\begin{convention}
In this paper, we regard the number field $F$ as being fixed, and we allow all implied constants to depend on it (unless we emphasize the opposite).
\end{convention}

\begin{acknowledgement} We thank the referee for a thorough reading and several constructive remarks.
\end{acknowledgement}

\section{Basic setup and notation}\label{section1}

Let $F$ be a number field of degree $n = r_1 + 2r_2$ over $\QQ$ with ring of integers $\mo$ and different ideal $\md$.
The completions $F_v$ at the various places $v$ are equipped with canonical norms (or modules) as in \cite{We}. In particular, at an archimedean place $v$ we have $|x|_v = |x|^{[F_v:\RR]}$, where $|\cdot|$ denotes the usual absolute value. We reserve the symbol $\mpr$ to the prime ideals of $\mo$, and we use it to label the non-archimedean places of $F$ in the usual way. For each prime $\mpr$, we fix a uniformizer $\varpi_\mpr\in\mo_\mpr$ of $\mpr\mo_\mpr$. As usual, we define the adele ring of $F$ as a restricted direct product
\[\Aa:=F_\infty\times\Aa\fin,\qquad
F_\infty:=\prod_{v\mid\infty}F_v,\qquad\Aa\fin:=\sideset{}{'}\prod_\mpr F_\mpr,\]
and we write accordingly
\[|x|_\Aa:=|x_\infty|_\infty\cdot|x\fin|\fin,\qquad
|x_\infty|_\infty:=\prod_{v\mid\infty}|x_v|_v,\qquad|x\fin|\fin:=\prod_\mpr|x_\mpr|_\mpr\]
for the module of an idele $x\in\Aa^\times$. We further decompose the archimedean module as
\begin{equation}\label{RnormCnorm}
|x_\infty|_\infty=|x_\infty|_{\RR}\cdot|x_\infty|_{\CC},\qquad
|x_\infty|_{\RR}:=\prod_{\text{$v$ real}}|x_v|,\qquad |x_\infty|_{\CC}:=\prod_{\text{$v$ complex}}|x_v|^2.
\end{equation}
This is consistent with \eqref{lambda-norm}. We introduce the following notation for the closure of $\mo$ in $\Aa\fin$:
\begin{equation}\label{hatmonew}\hat\mo:=\prod_\mpr\mo_\mpr.\end{equation}

We call a field element $x\in F^\times$ \emph{totally positive} if $x_v>0$ holds at every real place $v$. We denote the group of totally positive field elements by $F^\times_+$, and the group of totally positive units by $\mo^\times_+$. We choose a set of representatives $\theta_1,\dots,\theta_h\in\Aa\fin^\times$ for the ideal classes of $F$; without loss of generality, they lie in $\hat\mo$.

As mentioned in the introduction, we can fix a square-free ideal $\mq\subseteq\mo$ in such a way that the only elements of $\mo^\times$ that are quadratic residues modulo $\mq$ are the elements of $(\mo^\times)^2$. Indeed, if $u$ is a \emph{non-square} unit, then $F\bigl(\sqrt{u}\bigr)/F$ is one of the finitely many quadratic extensions corresponding to the square classes in $\mo^\times/(\mo^\times)^2$. Moreover, for any prime $\mpr$ that is inert in this extension, $u$ is a quadratic non-residue modulo $\mpr$. So if we choose an inert prime for each of the mentioned extensions, and $\mq$ is divisible by all these primes, then $\mq$ has the required property. We fix such an ideal $\mq$ once and for all, with the additional requirement that
\begin{equation}\label{sizeq}
\N\mq\geq 300^n.
\end{equation}
We can clearly think of $\mq$ as a function of $F$. (For concreteness, we could pick $\mq$ so that its norm is minimal, and with additional constraints we could even pin down $\mq$ uniquely).

We fix a square-free ideal $\mn\subseteq\mo$, and we consider the corresponding global Hecke congruence subgroup
\begin{equation}\label{Kdef}
K:=\prod_v K_v\qquad\quad\text{with}\qquad\quad
K_v:=\begin{cases}
\oO_2(\RR)&\text{for $v$ real},\\
\U_2(\CC)&\text{for $v$ complex},\\
\GL_2(\mo_\mpr)&\text{for $\mpr\nmid\mn$},\\
K_0(\mpr\mo_\mpr)&\text{for $\mpr\mid\mn$},
\end{cases}
\end{equation}
where
\[K_0(\mpr\mo_\mpr):=\left\{\abcd\in\GL_2(\mo_\mpr):\ c\in\mpr\mo_\mpr\right\}\]
is the subgroup of $\GL_2(\mo_\mpr)$ consisting of the matrices whose lower left is entry divisible by $\mpr\mo_\mpr$.
As explained in the introduction, we need to enlarge our spectrum a bit. With this in mind, we introduce
\begin{equation}\label{Kdef3}
K^\flat:=\prod_v K^\flat_v\qquad\quad\text{with}\qquad\quad
K^\flat_v:=\begin{cases}
K_v&\text{for $v\nmid\mq$},\\
K_1(\mpr\mo_\mpr)&\text{for $\mpr\mid \mq$},
\end{cases}
\end{equation}
where
\[K_1(\mpr\mo_\mpr):=\left\{\abcd\in\GL_2(\mo_\mpr):\ a-d\in\mpr\mo_\mpr,\ c\in\mpr\mo_\mpr\right\}\]
is the subgroup of $K_0(\mpr\mo_\mpr)$ consisting of the matrices whose diagonal entries are congruent to each other modulo $\mpr\mo_\mpr$.

We fix a Haar measure on $\GL_2(\Aa)$, and we use it to define the Hilbert space $L^2(\tilde X)$, where $\tilde X$ is the finite volume coset space\footnote{For any ring $R$, we denote by $\Z(R)$ the matrix group $\bigl\{\bigl(\begin{smallmatrix}a&0\\0&a\end{smallmatrix}\bigr): a\in R^\times\bigr\}$.}
\begin{equation}\label{Xtildedef}
\tilde X:=\GL_2(F)\backslash\GL_2(\Aa)\slash\Z(F_\infty).
\end{equation}
All the other $L^2$-spaces in this paper will be regarded or defined as Hilbert subspaces of $L^2(\tilde X)$.
We consider a spherical Hecke--Maa{\ss} newform $\phi$ on $\GL_2$ over $F$ of level $\mn$ and trivial central character. By definition,
$\phi:\GL_2(\Aa)\to\CC$ is a left $\GL_2(F)$-invariant and right $\Z(\Aa)K$-invariant function that generates an irreducible cuspidal representation $\pi$ of $\GL_2$ over $F$ of conductor $\mn$. It spans the one-dimensional newspace $\pi^K$, and it corresponds to a pure tensor $\otimes_v\phi_v$ of local newvectors $\phi_v\in\pi_v^{K_v}$ (cf.\ \cite[Cor~2]{Mi}, \cite[Th.~4]{Fl}, \cite[Th.~1]{Ca}). In particular, $\phi$ is a cuspidal eigenfunction of the Hecke algebra for $K$ (as defined in Section~\ref{section2}). Inspired by Venkatesh~\cite[Subsection~2.3]{Ve1}, we regard $\phi$ as a square-integrable function on the coset spaces
\begin{equation}\label{Xdef}
X:=\tilde X\slash K\qquad\text{and}\qquad X^{\flat}:=\tilde X\slash K^{\flat}.
\end{equation}
More precisely, we identify $L^2(X)$ and $L^2(X^{\flat})$ with the right $K$-invariant and right $K^{\flat}$-invariant subspaces of $L^2(\tilde X)$ defined above.

In adelic treatments, one usually divides by $\Z(\Aa)$ instead of $\Z(F_\infty)$, especially if the central character
is assumed to be trivial. Dividing by the smaller group $\Z(F_\infty)$ in \eqref{Xtildedef} makes the spaces in \eqref{Xdef} larger and separates
the infinite part and the finite part nicely. The cost to pay is that one has to deal with a bigger automorphic spectrum:
instead of the trivial central character, one needs to consider all ideal class characters as central characters\footnote{This
subtlety enters in \eqref{Hecke}, where we assume that $\gcd(\ml,\mm)$ is a product of principal prime ideals.}. Introducing the ideal $\mq$, i.e.\ switching from $X$ to $X^\flat$, allows one to work with Hecke operators of smaller support (see Section~\ref{section2}), which is immensely beneficial for our matrix counting scheme (see Section~\ref{countingmatrices}). However, this has a similar effect (already for $F=\QQ$) of enlarging the automorphic spectrum. Indeed, the resulting quotients $X$ and $X^\flat$ are orbifolds with finitely many connected components. The connected components of $X$ correspond to the ideal classes of $F$, while those of $X^\flat$ correspond to certain cosets of the ray class group modulo $\mq$. More concretely, reduction modulo $\mq$ embeds $U:=\mo^\times/(\mo^\times)^2$ into $V:=(\mo/\mq)^\times/(\mo/\mq)^{\times 2}$ by our choice of $\mq$, and each connected component of $X$ is covered by exactly $[V:U]$ connected components of $X^\flat$ under the natural covering map $X^\flat\to X$.

For a ramified place $v$ (i.e.\ for $v=\mpr$ dividing the level $\mn$), the matrix $A_v:=\bigl(\begin{smallmatrix}&1\\\varpi_\mpr&\end{smallmatrix}\bigr)\in\GL_2(F_v)$ normalizes $K_v$. The group $K_v^\ast$ generated by $K_v$ and $A_v$ contains the center $\Z(F_v)$, and $K_v^\ast/\Z(F_v)K_v$ has order $2$. By multiplicity one and the assumption that the central character of $\phi$ is trivial, we infer that the right action of $K_v^\ast$ on $\phi$ is given by a character $K_v^\ast\to\{\pm 1\}$. It follows that $\abs{\phi}$ is right invariant by the global Atkin--Lehner group $K^\ast:=\prod_v K_v^\ast$, where we put $K_v^\ast:=\Z(F_v)K_v$ for all unramified places $v$, including the archimedean ones. That is, for the purpose of studying the sup-norm $\|\phi\|_{\infty}$, we can regard $\abs{\phi}$ as a square-integrable function on the coset space
\begin{equation}\label{Xdef2}
X^\ast:=\GL_2(F)\backslash\GL_2(\Aa)\slash K^\ast.
\end{equation}
We emphasize again that we regard $L^2(X^\ast)$, $L^2(X)$, $L^2(X^\flat)$ as Hilbert subspaces of $L^2(\tilde X)$, and we assume that $\|\phi\|_2=1$ holds in all these spaces.

We allow all implied constants depend on the number field $F$, hence also on the auxiliary ideal $\mq$ chosen above for $F$. Accordingly, $A\ll B$ means that $|A|\leq C|B|$ holds for a constant $C=C(F)>0$ depending on $F$, while $A\ll_S B$ means the same for a constant $C=C(F,S)>0$ depending on $F$ and $S$. If $S$ is a list of quantities including $\eps$, then it is implicitly meant that the bound holds for any sufficiently small $\eps>0$. The relation $A\asymp B$ means that $A\ll B$ and $B\ll A$ hold simultaneously, while $A\asymp_S B$ means that $A\ll_S B$ and $B\ll_S A$ hold simultaneously. Finally, inspired by \cite{BHM,HT2}, we shall use the notation
\begin{equation}\label{notationformula}
A \preccurlyeq B \qquad \overset{\text{def}}\Longleftrightarrow \qquad A \ll_\eps B|\lambda|_\infty^\eps(\N\mn)^{\eps},
\end{equation}
which will be in force for the rest of the paper.

\section{Hecke algebras and the idea of amplification}\label{section2}

While our newform $\phi$ lives naturally on the space $X$, and in fact $|\phi|$ is well-defined even on the space $X^{\ast}$, it is convenient to view $\phi$ as a function on $X^{\flat}$ which is equipped with more suitable operators for the purpose of amplification.

The groups $\Z(F_\infty)K$ and $\Z(F_\infty)K^\flat$ contain the central subgroup (cf.\ \eqref{hatmonew}, \eqref{Kdef}, \eqref{Kdef3})
\begin{equation}\label{hatmo}
Z:=\Z(F_\infty\hat\mo)=\prod_{v\mid\infty}\Z(F_v)\prod_{\mpr}\Z(\mo_\mpr),
\end{equation}
hence we can identify $X$ with $\Gamma\backslash G\slash K$, and $X^\flat$ with $\Gamma\backslash G\slash K^\flat$, where
\begin{equation}\label{groups}
G:=\GL_2(\Aa)\slash Z,\qquad \Gamma:=\GL_2(F)Z\slash Z\cong\GL_2(F)\slash\Z(\mo).
\end{equation}
In particular, we can identify the functions on $X$ (resp.\ $X^\flat$) with those functions on $G$ that are left $\Gamma$-invariant and right $K$-invariant (resp.\ right $K^\flat$-invariant).
Accordingly, we have an inclusion of Hilbert spaces, each defined via the Haar measure that we fixed on $\GL_2(\Aa)$,
\begin{equation}\label{L2spaces}
L^2(X)\leq L^2(X^\flat)\leq L^2(\Gamma\backslash G)\leq L^2(\tilde X).
\end{equation}

We define the $\Z(F_\infty)$-invariant norm
\[\|g_\infty\|:=\prod_{v\mid\infty}\frac{|a_v|^2+|b_v|^2+|c_v|^2+|d_v|^2}{2|a_vd_v-b_vc_v|},\qquad g_\infty=\abcd\in\GL_2(F_\infty),\]
and we say that $f:G\to\CC$ is a \emph{rapidly decaying smooth} function if the following properties hold for $g=g_\infty g\fin$, where $g_\infty\in\GL_2(F_\infty)$ and $g\fin\in\GL_2(\Aa\fin)$:
\begin{itemize}
\medskip
\item $f(g)$ is compactly supported in $g\fin$, and it is locally constant in $g\fin$ for any fixed $g_\infty$;
\medskip\smallskip
\item $\|g_\infty\|^{N}f(g)$ is bounded for any $N>0$, and $f(g)$ is $C^\infty$ in $g_\infty$ for any fixed $g\fin$.
\medskip
\end{itemize}
We denote by $C(G)$ the convolution algebra of these rapidly decaying smooth functions on $G$. For $f\in C(G)$ and $\psi\in L^2(\Gamma\backslash G)$, we consider the function $R(f)\psi\in L^2(\Gamma\backslash G)$ given by
\begin{equation}\label{Rf1}
(R(f)\psi)(x):=\int_{G}f(y)\psi(xy)\,dy=\int_{G}f(x^{-1}y)\psi(y)\,dy=\int_{\Gamma\backslash G}\Bigl(\sum_{\gamma\in\Gamma}f(x^{-1}\gamma y)\Bigr)\psi(y)\,dy.
\end{equation}
That is, $R(f)$ is an integral operator on $\Gamma\backslash G$ with kernel
\begin{equation}\label{Rf2}
k_f(x,y):=\sum_{\gamma\in\Gamma}f(x^{-1}\gamma y).
\end{equation}
Then $R(f_1\ast f_2)=R(f_1)R(f_2)$ for $f_1,f_2\in C(G)$, and the adjoint of $R(f)$ equals $R(\check f)$ with
\[\check f(g):=\ov{f(g^{-1})},\qquad g\in G.\]

We shall define convenient subalgebras of $C(G)$ in terms of the restricted product decomposition
\begin{equation}\label{groups2}
G=\sideset{}{'}\prod_v G_v,\qquad
G_v:=\begin{cases}
\GL_2(F_v)/\Z(F_v)&\text{for $v\mid\infty$},\\
\GL_2(F_\mpr)/\Z(\mo_\mpr)&\text{for $v\nmid\infty$}.
\end{cases}\end{equation}
We choose a Haar measure on each of the groups $\GL_2(F_v)$ so that their product is the Haar measure we fixed on $\GL_2(\Aa)$ earlier, and the measure of $\GL_2(\mo_\mpr)$ within $\GL_2(F_\mpr)$ is $1$. We define $C(G_v)$ and its action $f_v\mapsto R(f_v)$ on $L^2(\Gamma\backslash G)$ similarly as for $C(G)$, but with integration over $G_v$ instead of $G$. The restricted tensor product of these algebras is the $\CC$-span of pure tensors $\otimes_v f_v$ such that $f_v\in C(G_v)$ for all places $v$ and $f_\mpr$ is the characteristic function of $\GL_2(\mo_\mpr)$ for all but finitely many primes $\mpr$. We regard this product as a subalgebra of $C(G)$ in the usual way, namely by identifying $\otimes_v f_v$ with the function $x\mapsto\prod_v f_v(x_v)$ so that also $R(\otimes_v f_v)=\prod_v R(f_v)$; the products are finite in the sense that the factors equal the identity for all but finitely many $v$'s.

We write $\hat\GL_2(\mo_\mpr)$ for $\M_2(\mo_\mpr)\cap\GL_2(F_\mpr)$, and we define $\hat K_0(\mpr\mo_\mpr)$ as the subsemigroup of $\hat\GL_2(\mo_\mpr)$ consisting of the matrices with lower left entry divisible by $\mpr$ and upper left entry coprime to $\mpr$.
In accordance with \eqref{Kdef} and \eqref{Kdef3}, we consider the following two open subsemigroups of $G$:
\begin{equation}\label{Mdef1}
M:=\prod_v M_v\qquad\quad\text{with}\qquad\quad
M_v:=\begin{cases}
G_v&\text{for $v\mid\infty$},\\
\hat\GL_2(\mo_\mpr)/\Z(\mo_\mpr)&\text{for $\mpr\nmid\mn$},\\
\hat K_0(\mpr\mo_\mpr)/\Z(\mo_\mpr)&\text{for $\mpr\mid\mn$};
\end{cases}
\end{equation}
\begin{equation}\label{Mdef3}
M^\flat:=\prod_v M^\flat_v\qquad\quad\text{with}\qquad\quad
M^\flat_v:=\begin{cases}
M_v&\text{for $v\nmid\mq$},\\
K_1(\mpr\mo_\mpr)/\Z(\mo_\mpr)&\text{for $\mpr\mid \mq$}.
\end{cases}
\end{equation}
Note that $M$ (resp.\ $M^\flat$) is left and right invariant by $K$ (resp.\ $K^\flat$). Finally, we define the \emph{Hecke algebra} for $K$, and the \emph{unramified Hecke algebra} at $\mq$ for $K^\flat$, as the restricted tensor products
\[\Hs:=\sideset{}{'}\bigotimes_v\Hs_v\qquad\quad\text{with}\qquad\quad\Hs_v:=C(K_v\backslash M_v\slash K_v),\]
\[\Hs^\flat:=\sideset{}{'}\bigotimes_v\Hs^\flat_v\qquad\quad\text{with}\qquad\quad\Hs^\flat_v:=C(K^\flat_v\backslash M^\flat_v\slash K^\flat_v).\]
These algebras have a unity element, unlike $C(G)$ or $C(G_v)$.

Note that $\Hs$ acts on $L^2(X)$, and $\Hs^\flat$ acts on $L^2(X^\flat)$, through $f\mapsto R(f)$. There is a $\CC$-algebra embedding $\iota:\Hs^\flat\hookrightarrow\Hs$ such that any $f\in\Hs^\flat$ acts on the subspace $L^2(X)$ of $L^2(X^\flat)$ exactly as $\iota(f)\in\Hs$ does. We define this embedding as $\iota:=\otimes_v\iota_v$, by choosing an appropriate $\CC$-algebra embedding $\iota_v:\Hs^\flat_v\hookrightarrow\Hs_v$ at each place $v$. For $v\nmid\mq$, the local factors $\Hs^\flat_v$ and $\Hs_v$ are equal, so we choose $\iota_v$ to be the identity map. For $v\mid\mq$, the local factor $\Hs^\flat_v$ is isomorphic to $\CC$, so there is a unique choice for $\iota_v$. From now on, we use the usual convention that the subscript ``$\infty$'' collects the local factors at $v\mid\infty$, while the subscript ``fin'' collects the local factors at $v\nmid\infty$. Then, in particular, we can talk about the $\CC$-algebra embedding $\iota\fin:\Hs^\flat\fin\hookrightarrow\Hs\fin$. Under this embedding, thinking of $\Hs\fin$ (resp.\ $\Hs^\flat\fin$) as a subalgebra of $C(K\fin\backslash G\fin\slash K\fin)$ (resp.\ $C(K^\flat\fin\backslash G\fin\slash K^\flat\fin)$), the constant function $\vol(K^\flat\fin)^{-1}$ on a double coset $K^\flat\fin gK^\flat\fin\subset M^\flat\fin$ becomes the constant function $\vol(K\fin)^{-1}$ on the double coset $K\fin gK\fin\subset M\fin$. In the next two paragraphs, we introduce the Hecke operators for $X$ in terms of $\Hs\fin$, and the (unramified at $\mq$) Hecke operators for $X^\flat$ in terms of $\Hs^\flat\fin$. Our presentation is based to some extent on \cite[Section~2]{Sh} and \cite[Ch.~3]{Sh2}.

For any nonzero ideal $\mm\subseteq\mo$, we consider the Hecke operator $T_\mm:=R(t_\mm)$ on $L^2(X)$, where $t_\mm\in\Hs\fin$ is given by
\begin{equation}\label{tmdef}
t_\mm(x):=\begin{cases}
(\N\mm)^{-1/2}\vol(K\fin)^{-1}&\text{for $x\in M\fin$ and $(\det x)\mo=\mm$},\\
0&\text{otherwise}.\end{cases}
\end{equation}
We note that here the determinant of $x\in\M_2(\hat\mo)/\Z(\hat\mo)$ is an element of $\hat\mo/(\hat\mo^\times)^2$ rather than an element of $\hat\mo$, but still it determines a unique ideal in $\mo$ that we denoted by $(\det x)\mo$. We also need the supplementary operator $S_\mm:=R(s_\mm)$ on $L^2(X)$, with $s_\mm\in\Hs\fin$ defined as follows.
First we assume that $\mm$ is coprime to the level $\mn$.
We take any finite idele $\mu\in\Aa\fin^\times$ representing $\mm$, i.e.\ $\mm=\mu\mo$, and then we put
\begin{equation}\label{smdef}
s_\mm(x):=\begin{cases}
\vol(K\fin)^{-1}&\text{for $x\in \bigl(\begin{smallmatrix}\mu&\\&\mu\end{smallmatrix}\bigr)K\fin/\Z(\hat\mo)$},\\
0&\text{otherwise}.\end{cases}
\end{equation}
The function $s_\mm\in\Hs\fin$ is independent of the representative $\mu$, because $K\fin$ contains $\Z(\hat\mo)$, and
$S_\mm$ agrees with the right action of $\bigl(\begin{smallmatrix}\mu&\\&\mu\end{smallmatrix}\bigr)$. In particular, $S_\mm$ commutes with the Hecke operators. Using that $L^2(X)$ consists of functions invariant under $\Z(FF_\infty\hat\mo)$, we see that $S_\mm$ only depends on the ideal class of $\mm$, and it is the identity map whenever $\mm$ is principal. If $\mm$ and $\mn$ are not coprime, then we define $s_\mm$ (hence also $S_\mm$) to be zero. The Hecke operators commute with each other as a consequence of the following multiplicativity relation, valid for all nonzero ideals $\ml,\mm\subseteq\mo$ (cf.\ \cite[(2.12)]{Sh}):
\begin{equation}\label{Heckeoriginal}
t_\ml\ast t_\mm=\sum_{\mk|\gcd(\ml,\mm)}s_\mk\ast t_{\ml\mm/\mk^2},\qquad\text{therefore,}\qquad T_\ml T_\mm=\sum_{\mk|\gcd(\ml,\mm)}S_\mk T_{\ml\mm/\mk^2}.
\end{equation}
In addition, if $\mm$ is coprime to the level $\mn$, then we have that $t_\mm=s_\mm\ast\check t_\mm$, whence $T_\mm$ is a normal operator, and it is even self-adjoint when $\mm$ is principal.

For any ideal $\mm\subseteq\mo$ coprime to $\mq$, we define the functions $t^\flat_\mm,s^\flat_\mm\in\Hs^\flat\fin$
in the same way as $t_\mm,s_\mm\in\Hs\fin$, but with $K\fin$ and $M\fin$ replaced by $K^\flat\fin$ and $M^\flat\fin$ (cf.\ \eqref{tmdef}--\eqref{smdef}). In particular,
\begin{equation}\label{tmdefflat}
t^\flat_\mm(x):=\begin{cases}
(\N\mm)^{-1/2}\vol(K^\flat\fin)^{-1}&\text{for $x\in M^\flat\fin$ and $(\det x)\mo=\mm$},\\
0&\text{otherwise}.\end{cases}
\end{equation}
The corresponding operators on $L^2(X^\flat)$ are $T^\flat_\mm:=R(t^\flat_\mm)$ and $S^\flat_\mm:=R(s^\flat_\mm)$.
Then in fact $\iota\fin(t^\flat_\mm)=t_\mm$ and $\iota\fin(s^\flat_\mm)=s_\mm$ under the $\CC$-algebra embedding $\iota\fin:\Hs^\flat\fin\hookrightarrow\Hs\fin$, hence \eqref{Heckeoriginal} implies the analogous relations
\[t^\flat_\ml\ast t^\flat_\mm=\sum_{\mk|\gcd(\ml,\mm)}s^\flat_\mk\ast t^\flat_{\ml\mm/\mk^2},\qquad\text{therefore,}\qquad T^\flat_\ml T^\flat_\mm=\sum_{\mk|\gcd(\ml,\mm)}S^\flat_\mk T^\flat_{\ml\mm/\mk^2}.\]
An important special case is when $\gcd(\ml,\mm)$ is a product of \emph{principal prime ideals} not dividing $\mn\mq$. In this case, the above relations simplify to
\begin{equation}\label{Hecke}
t^\flat_\ml\ast t^\flat_\mm=\sum_{\mk|\gcd(\ml,\mm)}t^\flat_{\ml\mm/\mk^2},\qquad\text{therefore,}\qquad T^\flat_\ml
T^\flat_\mm=\sum_{\mk|\gcd(\ml,\mm)}T^\flat_{\ml\mm/\mk^2}.
\end{equation}
In addition, if $\mm$ is coprime to $\mn\mq$, then we have that $t^\flat_\mm=s^\flat_\mm\ast\check t^\flat_\mm$, whence $T^\flat_\mm$ is a normal operator, and it is even self-adjoint when $\mm$ is principal.

Let $f\in\Hs$ be arbitrary. As $\phi\in L^2(X)$ is a newform of level $\mn$, we have $R(f)\phi=c(f)\phi$ for some $c(f)\in\CC$. The same conclusion also holds for $f\in\Hs^\flat$, because in this case $\iota(f)\in\Hs$, and $R(f)\phi=R(\iota(f))\phi$. Moreover, if $f=\otimes_v f_v$ is a pure tensor from $\Hs^\flat$, then $R(f_v)\phi=c(f_v)\phi$ for some $c(f_v)\in\CC$, and $c(f)=\prod_v c(f_v)$; there is a similar decomposition $c(f)=c(f_\infty)c(f\fin)$ for partial tensors $f=f_\infty\otimes f\fin\in\Hs^\flat_\infty\otimes\Hs^\flat\fin$.
In particular, $\phi$ is an eigenfunction of each Hecke operator $T_\mm$ with eigenvalue
\begin{equation}\label{lambdadef}
\lambda(\mm):=c(t_\mm),
\end{equation}
and for $\mm$ coprime $\mq$ it is also an eigenfunction of $T^\flat_\mm$ with the same eigenvalue.

Now we describe along these lines the idea of \emph{amplification}, a technique pioneered by Duke, Friedlander, Iwaniec, and Sarnak~\cite{DFI,FI,IS} to prove efficient bounds for automorphic $L$-functions on the critical line, and also for $|\phi(g)|$ at a given $g\in\GL_2(\Aa)$. Assume that $f\in\Hs^\flat$ is such that the operator $R(f)$ on $L^2(X^\flat)$ is positive. Then the eigenvalue $c(f)$ is nonnegative, and the orthogonal decomposition $L^2(X^\flat)=(\CC\phi)+(\CC\phi)^\perp$ is $R(f)$-invariant (because $R(f)$ is self-adjoint). Any $\psi\in L^2(X^\flat)$ decomposes uniquely as $\psi=\psi_1+\psi_2$, where $\psi_1\in\CC\phi$ and $\psi_2\in(\CC\phi)^\perp$, and therefore
\[\langle R(f)\psi,\psi\rangle=\langle R(f)\psi_1,\psi_1\rangle+\langle R(f)\psi_2,\psi_2\rangle\geq \langle R(f)\psi_1,\psi_1\rangle.\]
On the right hand side, we have explicitly $\psi_1=\langle\psi,\phi\rangle\phi$, hence flipping the two sides we obtain
\[c(f)|\langle\psi,\phi\rangle|^2\leq\langle R(f)\psi,\psi\rangle.\]
The inner products and also $R(f)\psi$ can be expressed as integrals over $X^\flat$ (cf.\ \eqref{L2spaces}--\eqref{Rf2}), yielding
\[c(f)\int_{X^{\flat}\times X^{\flat}} \phi(x)\ov{\phi(y)}\,\psi(y)\ov{\psi(x)}\,dx dy\leq \int_{X^{\flat}\times X^{\flat}} k_f(x,y)\,\psi(y)\ov{\psi(x)}\,dx dy.\]

We can use this inequality to estimate the value $|\phi(g)|$ at the given point $g\in\GL_2(\Aa)$ as follows.
Note that the integrals are over a rather concrete space: an orbifold with finitely many connected components. We take a basis of open neighborhoods $\{V\}\subset X^{\flat}$ of the point $\Gamma gZK^{\flat} \in X^{\flat}$ (the image of the coset $gZ\in G$), and we let $\psi=\psi_V\in L^2(X^\flat)$ run through the corresponding characteristic functions. Then we get by continuity, as $V$ approaches the point $\Gamma gZK^{\flat}\in X^{\flat}$,
\[c(f)\bigl(|\phi(g)|^2+o(1)\bigr)\vol(V\times V)\leq\bigl(k_f(g,g)+o(1)\bigr)\vol(V\times V).\]
We conclude
\begin{equation}\label{ampbound1}
c(f)|\phi(g)|^2\leq k_f(g,g)=\sum_{\gamma\in\Gamma}f(g^{-1}\gamma g).
\end{equation}
This is the pre-trace inequality mentioned in the introduction.
The idea of amplification is to find, in terms of $\phi$, a positive operator $R(f)$ as above such that $c(f)$ is relatively large, while the right hand side is relatively small. By dividing the last inequality by $c(f)$, we see that such an operator gives rise to an upper bound for $|\phi(g)|$. We note that the above argument goes back to Mercer~\cite{Me}; see especially the end of Section~6 in his paper, and see also \cite[Section~98]{RSz} for a modern account.

\section{Iwasawa decomposition modulo Atkin--Lehner operators}\label{atkinlehner}

In the next two sections, we establish a nice fundamental domain for the space $X^{\ast}$ (cf.\ \eqref{Xdef2}), which is the natural habitat of $|\phi|$. We start by developing a variant of the usual Iwasawa decomposition for $\GL_2(F_v)$. The results are probably known to experts.

First we recall the action of $\GLR$ on the hyperbolic plane $\Hh^2$, and the action of $\GLC$ on the hyperbolic 3-space $\Hh^3$.
We identify $\Hh^2$ with a half-plane in the set of complex numbers $\CC=\RR+\RR i$,
\begin{equation}\label{h2}\Hh^2:=\{x+yi:x\in\RR,\ y>0\}\subset\CC.\end{equation}
A matrix $\sabcd\in\GL_2^+(\RR)$ of positive determinant maps a point $P\in\Hh^2$ to
$(aP+b)(cP+d)^{-1}\in\Hh^2$, while $\bigl(\begin{smallmatrix}-1&\\&1\end{smallmatrix}\bigr)\in\GLR$ maps it to $-\ov{P}\in\Hh^2$. This determines a transitive action of $\GLR$ on $\Hh^2$, and by examining the stabilizer of the point $i\in\Hh^2$, we see that
\begin{equation}\label{eq1}\Hh^2\cong \GL_2(\RR)\slash \Z(\RR)\oO_2(\RR).\end{equation}
Similarly, we identify $\Hh^3$ with a half-space in the set of Hamilton quaternions $\HH=\RR+\RR i+\RR j+\RR k$,
\begin{equation}\label{h3}\Hh^3:=\{x+yj:x\in\CC,\ y>0\}\subset\HH.\end{equation}
A matrix $\sabcd\in\GL_2^+(\CC)$ of positive real determinant maps a point $P\in\Hh^3$ to $(aP+b)(cP+d)^{-1}\in\Hh^3$, while any central element $\bigl(\begin{smallmatrix}a&\\&a\end{smallmatrix}\bigr)\in\GLC$ fixes it. This determines a transitive action of $\GLC$ on $\Hh^3$, and by examining the stabilizer of the point $j\in\Hh^3$, we see that
\begin{equation}\label{eq2}\Hh^3\cong \GL_2(\CC)\slash \Z(\CC)\U_2(\CC).\end{equation}

The following two lemmas provide explicit local Iwasawa decompositions; in particular, Lemma~\ref{lemma1} (with $y>0$) explicates the isomorphisms \eqref{eq1} and \eqref{eq2}. Recall that $K_v^{\ast}=\Z(F_v)K_v$ for $v\mid\infty$.

\begin{lemma}\label{lemma1}
Let $v\mid\infty$ be an archimedean place. Any matrix $\sabcd\in\GL_2(F_v)$ can be decomposed as
\begin{equation}\label{eq3}\abcd=\yx k,\end{equation}
where\footnote{For any ring $R$, we denote by $\gP(R)$ the matrix group $\bigl\{\bigl(\begin{smallmatrix}a&b\\0&d\end{smallmatrix}\bigr): a,d\in R^\times, b\in R\bigr\}$.} $\syx\in\gP(F_v)$ and $k\in K_v^\ast$. Moreover, the absolute value of $y$ is uniquely determined by
\begin{equation}\label{eq4}|y|=\frac{|ad-bc|}{|c|^2+|d|^2}.\end{equation}
\end{lemma}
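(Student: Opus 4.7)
My plan is to handle the two assertions — existence of the decomposition and the formula for $|y|$ — separately. For existence, I would run a $QR$-type (Gram--Schmidt) argument adapted to $K_v^\ast$; for the formula, I would interpret $|y|$ as the ``height'' of the point $g \cdot P_0$ in the hyperbolic model, where $P_0 = i$ at a real place (in view of \eqref{eq1}) or $P_0 = j$ at a complex place (in view of \eqref{eq2}), and compute this height from the two sides of \eqref{eq3}.

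For the decomposition, at a real place I would choose $k \in K_v = \oO_2(\RR)$ that rotates the bottom row $(c,d) \in \RR^2$ of $g$ into a vector of the form $(0, r)$; then $gk^{-1}$ is upper triangular, say $gk^{-1} = \bigl(\begin{smallmatrix}y_1&x_1\\0&r\end{smallmatrix}\bigr)$, and factoring out $rI \in \Z(F_v)$ on the right gives $g = \bigl(\begin{smallmatrix}y_1/r & x_1/r \\ 0 & 1\end{smallmatrix}\bigr) \cdot (rI)k$, with $(rI)k \in \Z(F_v)K_v = K_v^\ast$. At a complex place the same argument works verbatim with $\U_2(\CC)$ in place of $\oO_2(\RR)$, the role of the rotation being played by a suitable unitary matrix normalizing the row $(c,d) \in \CC^2$.

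For the formula, since $K_v^\ast$ is precisely the stabilizer of $P_0$, the image $g \cdot P_0$ depends only on the coset $gK_v^\ast$, and in particular its height is an invariant of that coset. From the decomposition $g = \syx k$ this invariant equals the height of $\syx \cdot P_0 = x + yP_0$, namely $|y|$. On the other hand, a standard M\"obius calculation of $g \cdot P_0 = (aP_0 + b)(cP_0 + d)^{-1}$ (multiplying numerator and denominator by the conjugate of $cP_0 + d$) shows that the height of $g \cdot P_0$ equals $|ad - bc|/(|c|^2 + |d|^2)$. Equating the two expressions yields \eqref{eq4}. The only mild subtlety arises at a complex place where $\det g$ may have arbitrary complex phase while the M\"obius action on $\Hh^3$ is defined for positive-real determinant; this is handled by multiplying $g$ on the right by a suitable scalar in $\Z(F_v) \subset K_v^\ast$ to normalize the determinant, which changes neither $gK_v^\ast$ nor $|y|$.
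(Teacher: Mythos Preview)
Your proof is correct. Both existence and the formula for $|y|$ go through as you describe; the only point to be careful about is the action of matrices with non-positive-real determinant, and you address this correctly via the scalar normalization (at real places one can alternatively factor out $\bigl(\begin{smallmatrix}-1&\\&1\end{smallmatrix}\bigr)$).

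The route differs from the paper's in both halves. For existence, the paper simply invokes the transitivity of $\GL_2(F_v)$ on $\Hh^2$ (resp.\ $\Hh^3$) together with the stabilizer identifications \eqref{eq1}--\eqref{eq2}, whereas you give an explicit QR/Gram--Schmidt construction; your argument is more self-contained, the paper's is shorter. For the formula \eqref{eq4}, the paper takes a purely matrix-algebraic approach: it multiplies both sides of \eqref{eq3} by their conjugate transposes, reads off $|u|^2=|c|^2+|d|^2$ from the $(2,2)$-entry (where $k\in\bigl(\begin{smallmatrix}u&\\&u\end{smallmatrix}\bigr)K_v$), and then takes determinants to get $|ad-bc|^2=|u|^4|y|^2$. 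This avoids any M\"obius computation and the determinant-phase issue altogether. Your geometric argument via the height of $g\cdot P_0$ is equally valid and perhaps more conceptual, but requires the separate treatment of the complex case that the $gg^\ast$ trick sidesteps.
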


\begin{proof}
Existence with a unique $y>0$ is clear from our remarks above, especially from \eqref{eq1} and \eqref{eq2}. Equation \eqref{eq4} is well-known: we multiply both sides of \eqref{eq3} with its conjugate transpose. We have $k\in\bigl(\begin{smallmatrix}u&\\&u\end{smallmatrix}\bigr)K_v$ for some $u\in F_v^\times$, and then we get
\[\left(\begin{matrix}|a|^2+|b|^2&\ast\\\ast&|c|^2+|d|^2\end{matrix}\right)=
\left(\begin{matrix}|u|^2&\\&|u|^2\end{matrix}\right)\left(\begin{matrix}|y|^2+|x|^2&x\\\ov{x}&1\end{matrix}\right).\]
It follows that $|u|^2=|c|^2+|d|^2$, while taking the determinant of both sides reveals that
$|ad-bc|^2=|u|^4 |y|^2$, and the claim follows.
\end{proof}

\begin{lemma}\label{lemma2}
Let $v=\mpr$ be a non-archimedean place. Any matrix $\sabcd\in\GL_2(F_v)$ can be decomposed as
\begin{equation}\label{eq5}\abcd=\yx k,\end{equation}
where $\syx\in\gP(F_v)$ and $k\in K_v^\ast$. Moreover, the $\mpr$-adic absolute value of $y$ is uniquely determined by
\begin{equation}\label{eq6}|y|_v=\begin{cases}
|(ad-bc)/\gcd(c,d)^2|_v&\text{when $|c|_v<|d|_v$ or $\mpr\nmid\mn$},\\
|\varpi_\mpr(ad-bc)/\gcd(c,d)^2|_v&\text{when $|c|_v\geq|d|_v$ and $\mpr\mid\mn$}.
\end{cases}\end{equation}
Here, $\gcd(c,d)$ stands for any generator of the $\mo_\mpr$-ideal $c\mo_\mpr+d\mo_\mpr$. Similarly, the image of $k$ in the group
$K_v^\ast/\Z(F_v)K_v$ is uniquely determined, namely
\begin{equation}\label{eq6b}k\in\begin{cases}
\Z(F_v)K_v&\text{when $|c|_v<|d|_v$ or $\mpr\nmid\mn$},\\
\Z(F_v)K_vA_v&\text{when $|c|_v\geq|d|_v$ and $\mpr\mid\mn$}.
\end{cases}\end{equation}
\end{lemma}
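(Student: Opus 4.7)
The plan is to reduce existence to an explicit construction of $k$ based on the bottom row $(c,d)$ of the matrix, and to reduce uniqueness to a general fact about upper-triangular elements of $K_v^\ast$. The three cases of the formula \eqref{eq6} reflect whether the bottom row, after suitable scaling, already belongs to $K_0(\mpr\mo_\mpr)$ or whether a twist by the Atkin--Lehner element $A_v$ is needed.

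For existence, let $g$ be a generator of the $\mo_\mpr$-ideal $c\mo_\mpr+d\mo_\mpr$, so that $(c/g,d/g)\in\mo_\mpr^2$ is unimodular. I would split into three cases. First, when $\mpr\nmid\mn$, Bézout lets us complete $(c/g,d/g)$ to some $k_0\in\SL_2(\mo_\mpr)$ with this row as its bottom row; then $k:=gk_0\in\Z(F_v)K_v$, and a direct computation of $\syx=\sabcd k^{-1}$ gives $y=(ad-bc)/g^2$, matching \eqref{eq6}. Second, when $\mpr\mid\mn$ and $|c|_v<|d|_v$, we have $g=d$ up to a unit and $c/d\in\mpr\mo_\mpr$, so $k_0:=\bigl(\begin{smallmatrix}1&0\\c/d&1\end{smallmatrix}\bigr)\in K_0(\mpr\mo_\mpr)=K_v$; again $k:=d\,k_0\in\Z(F_v)K_v$ works, and the same computation yields $y=(ad-bc)/d^2$. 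Third, when $\mpr\mid\mn$ and $|c|_v\geq|d|_v$, I right-multiply by $A_v^{-1}=\bigl(\begin{smallmatrix}0&1/\varpi_\mpr\\1&0\end{smallmatrix}\bigr)$, replacing the bottom row by $(d,c/\varpi_\mpr)$. The new row satisfies $|d|_v\leq|c|_v<|c/\varpi_\mpr|_v$, so the previous case applies; unwinding gives $\sabcd=\syxt k_0 A_v$ with $k_0 A_v\in\Z(F_v)K_v A_v$ and $\tilde y=-\varpi_\mpr(ad-bc)/c^2$, matching the second line of \eqref{eq6}.

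For uniqueness, suppose $\syx k_1=\syxt k_2$. Then $M:=k_2k_1^{-1}=\syxt^{-1}\syx=\bigl(\begin{smallmatrix}y/\tilde y&(x-\tilde x)/\tilde y\\0&1\end{smallmatrix}\bigr)$ is simultaneously upper triangular and an element of $K_v^\ast$. I claim $M\in\Z(F_v)K_v$, which would immediately give \eqref{eq6b} and, since the bottom-right entry of $M$ equals $1$, force $y/\tilde y$ to be a unit, yielding \eqref{eq6}. The key observation is that no upper-triangular element can lie in the other coset $\Z(F_v)K_v A_v$: if $\alpha\bigl(\begin{smallmatrix}p&q\\r&s\end{smallmatrix}\bigr)A_v$ is upper triangular with $\bigl(\begin{smallmatrix}p&q\\r&s\end{smallmatrix}\bigr)\in K_0(\mpr\mo_\mpr)$, computing the product forces $s\varpi_\mpr=0$, i.e.\ $s=0$; but then $\det\bigl(\begin{smallmatrix}p&q\\r&s\end{smallmatrix}\bigr)=-qr$, and the unit condition combined with $r\in\mpr\mo_\mpr$ is contradictory. (In the unramified case $\mpr\nmid\mn$, the $A_v$-coset does not even arise, and the same computation directly gives $y/\tilde y\in\mo_\mpr^\times$.)

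The main technical obstacle, as in any Iwahori-type Iwasawa decomposition, is the interplay between $A_v$ and the condition $|c|_v\geq|d|_v$ in case three: one must verify that swapping rows via $A_v^{-1}$ really does put us into the previous case and track the resulting factor of $\varpi_\mpr$ correctly. Once this is done, the rest of the argument is essentially a pair of $2\times 2$ matrix computations together with the elementary fact that an element of $K_0(\mpr\mo_\mpr)$ must have its diagonal entries units.
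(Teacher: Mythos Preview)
Your proposal is correct and follows essentially the same route as the paper: exhibit an explicit decomposition by completing the (scaled) bottom row to an element of $K_v$, pass through the Atkin--Lehner element $A_v$ when $|c|_v\geq|d|_v$ and $\mpr\mid\mn$, and for uniqueness show that an upper-triangular element of $K_v^\ast$ must lie in $\Z(F_v)K_v$. The only cosmetic differences are that the paper handles the unramified case in two sub-cases ($|c|_v\leq|d|_v$ versus $|c|_v>|d|_v$) rather than in one Bézout step, and for uniqueness it simply records that $\gP(F_v)\cap K_v^\ast=\Z(F_v)\gP(\mo_\mpr)$ ``by inspection'' where you spell out the contradiction with the $A_v$-coset.
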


\begin{proof}
First we show that a decomposition of the form \eqref{eq5} exists with a $y$-coordinate satisfying \eqref{eq6} and a $k$-component satisfying \eqref{eq6b}. We start with the decomposition, valid for $d\neq 0$,
\[\abcd=\left(\begin{matrix}(ad-bc)/d^2&b/d\\&1\end{matrix}\right)
\left(\begin{matrix}d&\\c&d\end{matrix}\right).\]
This is of the form \eqref{eq5} as long as $|c|_v\leq|d|_v$ and $\mpr\nmid\mn$, or $|c|_v<|d|_v$ and $\mpr\mid\mn$ (because $\mn$ is square-free). Moreover, the $y$-coordinate equals here $(ad-bc)/d^2$, and also $c\mo_\mpr+d\mo_\mpr=d\mo_\mpr$ by $|c|_v\leq|d|_v$, which verifies \eqref{eq6} for this particular case. Similarly, the $k$-component equals here $\bigl(\begin{smallmatrix}d&\\c&d\end{smallmatrix}\bigr)\in\Z(F_v)K_v$, so that \eqref{eq6b} holds as well. The two cases in which we have established \eqref{eq5} can be summarized as the case of $|c/w|_v\leq|d|_v$, where we put
\[w:=\begin{cases}
1&\text{for $\mpr\nmid\mn$},\\
\varpi_\mpr&\text{for $\mpr\mid\mn$}.
\end{cases}\]

Assume now that we are in the complementary case $|c/w|_v>|d|_v$ (including the case $d=0$), so that in particular $|c|_v\geq|d|_v$. We consider the decomposition
\[\abcd=\left(\begin{matrix}b&a/w\\d&c/w\end{matrix}\right)
\left(\begin{matrix}&1\\w&\end{matrix}\right).\]
By our initial case, the first factor on the right hand side has a suitable decomposition
\[\left(\begin{matrix}b&a/w\\d&c/w\end{matrix}\right)=\yx k\]
with $k\in\Z(F_v)K_v$, $x\in F_v$, and
\begin{equation}\label{eq7}
y:=\frac{(ad-bc)/w}{(c/w)^2}=\frac{w(ad-bc)}{c^2}.
\end{equation}
Therefore,
\[\abcd=\yx k\left(\begin{matrix}&1\\w&\end{matrix}\right),\]
and this is a suitable decomposition upon regarding the product of the last two factors as a single element $\tilde k\in K_v^\ast$.
Moreover, $c\mo_\mpr+d\mo_\mpr=c\mo_\mpr$ by $|c|_v\geq|d|_v$, hence \eqref{eq7} verifies \eqref{eq6} for this particular case.
Similarly, $k\in\Z(F_v)K_v$ shows that $\tilde k\in\Z(F_v)K_v$ or
$\tilde k\in\Z(F_v)K_vA_v$ depending on whether $\mpr\nmid\mn$ or $\mpr\mid\mn$, and so $\eqref{eq6b}$ holds as well.

Now we prove that the $\mpr$-adic absolute value $|y|_v$ and the image of $k$ in the group $K_v^\ast/\Z(F_v)K_v$ are constant along all decompositions \eqref{eq5} of a given matrix $\sabcd\in\GL_2(F_v)$. In other words,
\[\left(\begin{matrix}y_1&x_1\\&1\end{matrix}\right)k_1=\left(\begin{matrix}y_2&x_2\\&1\end{matrix}\right)k_2\]
for $y_1,y_2\in F_v^\times$, $x_1,x_2\in F_v$, $k_1,k_1\in K_v^\ast$ implies that
\[y_1/y_2\in\mo_\mpr^\times\qquad\text{and}\qquad k_2k_1^{-1}\in\Z(F_v)K_v.\]
Rearranging, we get that
\[\left(\begin{matrix}y_1/y_2&(x_1-x_2)/y_2\\&1\end{matrix}\right)=\left(\begin{matrix}y_2&x_2\\&1\end{matrix}\right)^{-1}\left(\begin{matrix}y_1&x_1\\&1\end{matrix}\right)=k_2k_1^{-1}.\]
In particular, both sides lie in $\gP(F_v)\cap K_v^\ast$ which, by inspection, equals $\Z(F_v)\gP(\mo_\mpr)$. It follows that
$y_1/y_2\in\mo_\mpr^\times$ and $k_2k_1^{-1}\in\Z(F_v)\gP(\mo_\mpr)\subset\Z(F_v)K_v$. The proof is complete.
\end{proof}

\section{Geometry of numbers and the fundamental domain}\label{section4}

We start this section with a simple but useful observation about balancing infinite ideles with units.

\begin{lemma}\label{balancing} Let $y,z\in F_\infty^\times$ be two infinite ideles such that $|y|_\infty=|z|_\infty$. Then for any positive integer $m$, there is an $m$-th powered unit $t\in(\mo^\times)^m$ such that
\[|ty_v|_v\asymp_m |z_v|_v,\qquad v\mid\infty.\]
\end{lemma}

\begin{proof} We fix $m$, and we look for $t\in(\mo^\times)^m$ in the form $t=u^m$ with $u\in\mo^\times$. The infinite idele $s:=z/y\in F_\infty^\times$ satisfies $|s|_\infty=1$, and the conclusion can be rewritten as
\[m\log(|u|_v)=\log(|s_v|_v)+O_m(1),\qquad v\mid\infty.\]
Let us introduce the notation
\begin{equation}\label{logarithmicvector}
l(x):=\bigl(\log(|x_v|_v)\bigr)_{v\mid\infty}\in\prod_{v\mid\infty}\RR,\qquad x\in F_\infty^\times.
\end{equation}
Then, $\{l(u):u\in\mo^\times\}$ is a lattice in the hyperplane
\begin{equation}\label{hyperplane}
W:=\Bigl\{w\in \prod_{v\mid\infty}\RR:\ \sum_{v\mid\infty} w_v=0\Bigr\}
\end{equation}
by Dirichlet's unit theorem (cf.~\cite[p.~93]{We}). As the vector $l(s)/m$ lies in $W$, there exists a lattice point $l(u)$ within $O(1)$ distance from it. Multiplying by $m$, we get the required conclusion in the stronger form
\[m\log(|u|_v)=\log(|s_v|_v)+O(m),\qquad v\mid\infty.\]
The proof is complete.
\end{proof}

\begin{remark}\label{fundamentaldomainremark}
Applying Lemma~\ref{balancing} with $m=1$ (or its proof if more geometric features are needed), we see that $F_\infty^\times\slash\mo^\times$ has a fundamental domain lying in $\{y\in F_\infty^\times:\text{$|y_v|\asymp |y_\infty|^{1/n}$ for all $v\mid\infty$}\}$. We recall here that $n=[F:\QQ]$, and $|y_v|_v=|y_v|$ when $v$ is a real place, but $|y_v|_v=|y_v|^2$ when $v$ is a complex place. Switching to $m=2$, we see the same for $F_\infty^\times\slash(\mo^\times)^2$, or (mutatis mutandis) for $F_\infty^\times\slash\mo^\times_+$ and $F^\times_+\slash\mo^\times_+$ (cf.\ Section~\ref{section1}).
\end{remark}

By Lemmata~\ref{lemma1} and \ref{lemma2}, any global matrix $g\in\GL_2(\Aa)$ can be decomposed (non-uniquely) as
\begin{equation}\label{eq8}g=\yx k,\end{equation}
where $\syx\in\gP(\Aa)$ and $k\in K^\ast$. Moreover, its height
\[\height(g):=|y|_{\Aa}=\prod_v|y_v|_v\]
and the image of $k$ in the group $K^\ast/\Z(\Aa)K\cong\prod_{\mpr\mid\mn}\{\pm 1\}$
are uniquely determined. By using the ideal class representatives $\theta_1,\dots,\theta_h\in\Aa\fin^\times$ introduced in Section~\ref{section1}, we can refine \eqref{eq8} and obtain a convenient fundamental domain for $X^\ast$ (cf.\ \eqref{Xdef2}).

\begin{lemma}\label{lemma3} Any $g\in\GL_2(\Aa)$ can be decomposed as
\begin{equation}\label{eq12}g=\ts\yx\left(\begin{matrix}\theta_i&\\&1\end{matrix}\right)k,\end{equation}
where $\sts\in\gP(F)$, $\syx\in\gP(F_\infty)$, and $k\in K^\ast$. Moreover, the possible modules $|y|_\infty$ that occur for a given $g\in\GL_2(\Aa)$ are essentially constant, namely $\height(g)\asymp|y|_\infty$.
\end{lemma}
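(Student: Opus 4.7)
The plan is to chain the pointwise Iwasawa decompositions of Lemmas~\ref{lemma1}~and~\ref{lemma2} into an adelic one, and then absorb the finite part of the resulting $\gP$-factor into a product of a rational element, a fixed ideal class representative, and something that can be folded into $K^\ast$.

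First I would apply Lemmas~\ref{lemma1}~and~\ref{lemma2} at every place to obtain an adelic Iwasawa decomposition $g=\syxt k^0$ with $\syxt\in\gP(\Aa)$ and $k^0\in K^\ast$; the decomposition is trivial at almost all primes (where $g_v$ already lies in $K_v$), and each local modulus $|\tilde y_v|_v$ is unambiguous by those lemmas.

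Next I would split off a rational factor on the finite side. The idele $\tilde y\fin\in\Aa\fin^\times$ generates a fractional ideal whose class coincides with that of a unique representative $\theta_i$, yielding $\tilde y\fin=t\theta_i u$ for some $t\in F^\times$ and some $u\in\hat\mo^\times$. Density of $F$ in $\Aa\fin$ (equivalently, $F+t\theta_i\hat\mo=\Aa\fin$) then supplies an $s\in F$ with $w:=(\tilde x\fin-s)/(t\theta_i)\in\hat\mo$, and a direct multiplication yields the finite identity
\[\syxt\fin=\bigl(\begin{smallmatrix}t\theta_i&s\\&1\end{smallmatrix}\bigr)\bigl(\begin{smallmatrix}u&w\\&1\end{smallmatrix}\bigr),\]
whose right factor lies in $\gP(\hat\mo)\subset K\fin\subset K^\ast\fin$. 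Setting $y:=\tilde y_\infty/t\in F_\infty^\times$ and $x:=(\tilde x_\infty-s)/t\in F_\infty$ matches the archimedean side via $\sts_\infty\yx=\syxt_\infty$, so assembling all places gives the advertised decomposition $g=\sts\yx\bigl(\begin{smallmatrix}\theta_i&\\&1\end{smallmatrix}\bigr)k$ with $k\in K^\ast$, where $\sts$ is embedded diagonally and $\bigl(\begin{smallmatrix}\theta_i&\\&1\end{smallmatrix}\bigr)$ is embedded at the finite places only.

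For the height statement, Lemmas~\ref{lemma1}~and~\ref{lemma2} make $\height(g)=|\tilde y|_\Aa$ an invariant of $g$. Using $\tilde y_\infty=ty$ and $\tilde y\fin=t\theta_i u$ with $u\in\hat\mo^\times$, the product formula $|t|_\Aa=1$ for $t\in F^\times$ gives
\[\height(g)=|t|_\infty|y|_\infty\cdot|t|\fin|\theta_i|\fin=|y|_\infty|\theta_i|\fin.\]
Since the $\theta_i$ range over a fixed finite set, $|\theta_i|\fin$ is bounded away from $0$ and $\infty$; this yields $\height(g)\asymp|y|_\infty$ uniformly over all choices made in the construction. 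The main point of friction is the bookkeeping that ties the chosen finite data $(t,s,\theta_i)$ to the correct archimedean factor $\yx$: once $t$ and $s$ are pinned down by the finite identity, the archimedean piece is forced by the requirement that $\sts_\infty\yx$ reproduce $\syxt_\infty$.
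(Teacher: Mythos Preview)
Your proof is correct and follows essentially the same route as the paper's: an adelic Iwasawa decomposition via Lemmas~\ref{lemma1}--\ref{lemma2}, then splitting the finite $\gP$-factor using an ideal class representative and strong approximation, with the height computation coming from the product formula. The only cosmetic difference is that you separate the archimedean and finite components explicitly, whereas the paper writes $\tilde y=t\theta_iyy'$ and $\tilde x=s+t\theta_i(x+x')$ globally and then notes that $\syx$ and $\bigl(\begin{smallmatrix}\theta_i&\\&1\end{smallmatrix}\bigr)$ commute; one small point worth making explicit is that your height identity $\height(g)=|y|_\infty|\theta_i|\fin$ holds for \emph{every} decomposition of the form \eqref{eq12} (since any such is an instance of \eqref{eq8} with upper-left entry $ty\theta_i$), not only for the one you built.
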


\begin{proof} By \eqref{eq8}, we have a decomposition
\[g=\left(\begin{matrix}\tilde y&\tilde x\\&1\end{matrix}\right)\tilde k,\]
where $\bigl(\begin{smallmatrix}\tilde y&\tilde x\\&1\end{smallmatrix}\bigr)\in\gP(\Aa)$ and $\tilde k\in K^\ast$.
We can write $\tilde y=t\theta_iyy'$ with some $t\in F^\times$, $y\in F_\infty^\times$, $y'\in\hat\mo^\times$, and a unique index $i\in\{1,\dots,h\}$ depending on the fractional ideal $\tilde y\fin\mo$. Here $\hat\mo$ is as in \eqref{hatmonew}. In addition, as $F+F_\infty$ is dense in $\Aa$ (see~\cite[Cor.~2 to Th.~3 in Ch.~IV-2]{We}), we can write $\tilde x=s+t\theta_i(x+x')$ with some $s\in F$, $x\in F_\infty$, and $x'\in\hat\mo$. Therefore,
\[\left(\begin{matrix}\tilde y&\tilde x\\&1\end{matrix}\right)
=\left(\begin{matrix}t\theta_i&s\\&1\end{matrix}\right)\left(\begin{matrix}yy'&x+x'\\&1\end{matrix}\right)
=\ts\left(\begin{matrix}\theta_i&\\&1\end{matrix}\right)\yx\left(\begin{matrix}y'&x'\\&1\end{matrix}\right).\]
On the right hand side, the second and third factors commute, while the fourth factor (call it $k'$) lies in $K^\ast$, hence \eqref{eq12} follows
with $k:=k'\tilde k\in K^\ast$.

The second statement is straightforward by the fact that \eqref{eq12} is an instance of the Iwasawa decomposition \eqref{eq8}. Indeed,
\[\height(g)=|ty\theta_i|_\Aa=|y\theta_i|_\Aa=|y|_\infty|\theta_i|_\Aa\asymp|y|_\infty.\]
Here we used that $|t|_\Aa=1$ due to $t\in F^\times$ (see~\cite[Th.~5 in Ch.~IV-4]{We}).
\end{proof}

In contrast, the module $|y|_\infty$ can vary considerably if we allow any matrix $\gamma\in\GL_2(F)$ in place of $\sts\in\gP(F)$, and it will be useful for us to take $|y|_\infty$ as large as possible in this more general context.

\begin{lemma}\label{lemma4} Any $g\in\GL_2(\Aa)$ can be decomposed as
\begin{equation}\label{eq9}g=\gamma\yx\left(\begin{matrix}\theta_i&\\&1\end{matrix}\right)k,\end{equation}
where $\gamma\in\GL_2(F)$, $\syx\in\gP(F_\infty)$, and $k\in K^\ast$. Moreover, among the possible modules $|y|_\infty$ that occur for a given $g\in\GL_2(\Aa)$, there is a maximal one.
\end{lemma}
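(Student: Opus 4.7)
The existence of a decomposition of the form \eqref{eq9} is immediate from Lemma~\ref{lemma3}, since any $\sts \in \gP(F)$ is an element of $\GL_2(F)$. The substance of the claim is therefore the existence of a \emph{maximum} for $|y|_\infty$. My first step would be to reduce this to a statement about heights of adelic matrices: from any decomposition \eqref{eq9}, one obtains $\gamma^{-1}g = \syx \bigl(\begin{smallmatrix}\theta_i&\\&1\end{smallmatrix}\bigr)k$, which is of the form \eqref{eq12} with trivial $\gP(F)$-factor, so Lemma~\ref{lemma3} applied to $\gamma^{-1}g$ yields the exact identity $\height(\gamma^{-1}g) = |y\theta_i|_\Aa = |y|_\infty |\theta_i|_\fin$. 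Since the $h$ values $|\theta_i|_\fin$ are strictly positive and bounded above, it suffices to show that $\height(\gamma^{-1}g)$ attains a maximum as $\gamma$ ranges over $\GL_2(F)$.

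Next, I would make the dependence of $\height(\gamma^{-1}g)$ on $\gamma$ explicit. Writing $\gamma^{-1} = \sabcd$, the bottom row of $\gamma^{-1}g_v$ equals $(c,d)g_v$, and Lemmata~\ref{lemma1} and \ref{lemma2} give $|y_v|_v = |\det(\gamma^{-1}g)|_v/N_v\bigl((c,d)g_v\bigr)$, where $N_v(\alpha,\beta) := (|\alpha|^2+|\beta|^2)^{[F_v:\RR]}$ at archimedean places and $N_v(\alpha,\beta) := \max(|\alpha|_v,|\beta|_v)^2$ at non-archimedean places, modified by the bounded factor $|\varpi_\mpr|_v^{-1}$ in the second subcase of \eqref{eq6}. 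Taking the product over $v$ and using $|\det\gamma|_\Aa = 1$ by the product formula, this yields
\[\height(\gamma^{-1}g) = \frac{|\det g|_\Aa}{\phi(c,d)}, \qquad \phi(c,d) := \prod_v N_v\bigl((c,d)g_v\bigr),\]
and $\phi$ is $F^\times$-scale-invariant (again by the product formula), hence descends to a positive function on $\mathbb{P}^1(F)$.

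Maximizing $\height(\gamma^{-1}g)$ thus becomes the problem of minimizing $\phi$ on $F^2\setminus\{0\}$ modulo $F^\times$, which is the heart of the matter. My plan here is a Minkowski-style argument: since $g \in \GL_2(\Aa)$ has component in $\GL_2(\mo_\mpr)$ at almost all finite places, $\phi(c,d)$ is comparable, uniformly in $(c,d) \in F^2\setminus\{0\}$, to the standard projective square-height $H(c,d)^2 := \prod_v\max(|c|_v,|d|_v)^2$, with ratio bounded in terms of $g$ and the finitely many primes dividing $\mn$. Northcott's finiteness theorem then implies that $\{(c,d) \in F^2\setminus\{0\} : H(c,d) \leq B\}/F^\times$ is finite for every $B>0$, and the same conclusion for $\phi$ follows. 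In particular the positive infimum of $\phi$ on $\mathbb{P}^1(F)$ is attained. I expect the clean verification of this comparability and discreteness to be the main technical obstacle, but it is a classical geometry-of-numbers input of the same flavor that will be invoked throughout Section~\ref{section4}.

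Combining the attained infimum of $\phi$ with the finitely many possible values of $|\theta_i|_\fin$, $i \in \{1,\dots,h\}$, shows that the set of achievable $|y|_\infty$ values is a discrete subset of $\RR_{>0}$, bounded above, and hence attains its maximum.
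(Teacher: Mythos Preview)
Your approach is correct and takes a genuinely different route from the paper's. You recast the problem as minimizing a height-like function $\phi$ on $\mathbb{P}^1(F)$ and invoke Northcott's finiteness theorem, whereas the paper fixes one decomposition \eqref{eq9} and compares it directly with any alternative: writing $\tilde\gamma^{-1}\gamma=\sabcd$ and normalizing so that $c\mo+d\mo=\theta_m\mo$, the paper computes heights via Lemmata~\ref{lemma1}--\ref{lemma3} to obtain the explicit inequality $\prod_{v\mid\infty}(|c_vy_v|^2+|c_vx_v+d_v|^2)^{[F_v:\RR]}\ll_\mn 1$ whenever $|\tilde y|_\infty>|y|_\infty$, and then argues by hand (treating $c=0$ and $c\neq 0$ separately) that only finitely many fractional ideals $c\mo$, $d\mo$ and hence finitely many values of $|\tilde y|_\infty$ can arise. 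Your argument is more conceptual and packages the finiteness cleanly into Northcott; the paper's is more self-contained and keeps explicit track of the dependence on $\mn$ and on the fixed point $(x,y)$, which is in the same spirit as the quantitative geometry-of-numbers input in Lemma~\ref{lemma5}.

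One small wrinkle in your write-up: ``discrete and bounded above'' does not by itself force the maximum to be attained (e.g.\ $\{1-1/n:n\geq 1\}$). What your argument actually yields, via Northcott applied to $H$ and the comparability $\phi\asymp_g H^2$, is the stronger statement that only finitely many classes $[c:d]\in\mathbb{P}^1(F)$ satisfy $\phi(c,d)\leq B$ for any fixed $B$; combined with the finitely many values $|\theta_i|\fin$, this shows that only finitely many achievable values of $|y|_\infty$ exceed any given threshold, and \emph{that} is what guarantees the maximum. State it this way and the argument is complete.
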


\begin{proof} By Lemma~\ref{lemma3}, or alternatively by Lemma~\ref{lemma1} and strong approximation for $\SL_2(\Aa)$, a decomposition of the form \eqref{eq9} certainly exists. Let us now fix any decomposition as in \eqref{eq9}, and consider the alternative decompositions
\[g=\tilde\gamma\yxt\left(\begin{matrix}\theta_j&\\&1\end{matrix}\right)\tilde k,\]
where $\tilde\gamma\in\GL_2(F)$, $\syxt\in\gP(F_\infty)$, and $\tilde k\in K^\ast$. It suffices to show that there are only finitely many possible values of $|\tilde y|_\infty$ that exceed $|y|_\infty$. So we assume that $|\tilde y|_\infty>|y|_\infty$, and rearrange the terms to get
\[\tilde\gamma^{-1}\gamma\yx\left(\begin{matrix}\theta_i&\\&1\end{matrix}\right)=
\yxt\left(\begin{matrix}\theta_j&\\&1\end{matrix}\right)(\tilde k k^{-1}).\]
Then, with the notation $\sabcd:=\tilde\gamma^{-1}\gamma\in\GL_2(F)$ and $k':=\tilde k k^{-1}\in K^\ast$, we have
\[\abcd\yx\left(\begin{matrix}\theta_i&\\&1\end{matrix}\right)=\yxt\left(\begin{matrix}\theta_j&\\&1\end{matrix}\right)k'.\]
Multiplying both sides by a suitable matrix in $\Z(F)$, we can further achieve that the greatest common divisor $c\mo+d\mo$ equals $\theta_m\mo$ for some $m$. Now we calculate the height of both sides, using Lemmata~\ref{lemma1}--\ref{lemma3}. On the right hand side, we get $\asymp|\tilde y|_\infty$ by Lemma~\ref{lemma3}. On the left hand side, the product of the local factors $|ad-bc|_v$ coming from \eqref{eq4} and \eqref{eq6} equals $1$ by $ad-bc\in F^\times$ (cf.~\cite[Th.~5 in Ch.~IV-4]{We}). The product of the other factors coming from \eqref{eq6} is $\asymp_\mn 1$ due to finitely many possibilities for the pair $(\theta_i,\theta_m)$ and the fact that $\syx$ has no finite components. So we can focus on the remaining factors coming from \eqref{eq4}, and we conclude that
\[\frac{|y|_\infty}{\prod_{v\mid\infty}(|c_vy_v|^2+|c_vx_v+d_v|^2)^{[F_v:\RR]}}\asymp_\mn|\tilde y|_\infty.\]
Along these lines, we also see that the fractional ideals $c\mo$ and $d\mo$ together with the denominator on the left hand side determine $|\tilde y|_\infty$ up to $\ll_\mn 1$ choices, so it suffices to show that these quantities only take on finitely many different values.
At any rate, the right hand side exceeds $|y|_\infty$ by assumption, hence we immediately get
\begin{equation}\label{eq10}\prod_{v\mid\infty}(|c_vy_v|^2+|c_vx_v+d_v|^2)^{[F_v:\RR]}\ll_\mn 1.\end{equation}
If $c=0$, then \eqref{eq10} yields $|d|_\infty\ll_\mn 1$, so in this case there are $\ll_\mn 1$ choices for the fractional ideal $d\mo\subseteq\theta_m\mo$ and its norm $|d|_\infty$, whose square is apparently the left hand side of \eqref{eq10}. If $c\neq 0$, then \eqref{eq10} yields $|cy|_\infty\ll_\mn 1$, so in this case there are $\ll_{\mn,y} 1$ choices for the fractional ideal $c\mo\subseteq\theta_m\mo$. Let us fix a nonzero choice for $c\mo$ (including an arbitrary choice of the generator $c$; however, none of the implied constants below will depend on this choice). Dividing \eqref{eq10} by the squared norm $|c|_\infty^2$ (which is $\gg 1$), we get
\begin{equation}\label{eq11}\prod_{v\mid\infty}(|y_v|^2+|x_v+d_v/c_v|^2)^{[F_v:\RR]}\ll_{\mn} 1.\end{equation}
The factors are $\gg_y 1$, hence $x_v+d_v/c_v\ll_{\mn,y}1$ for all $v\mid\infty$. Using also $d/c\in(\theta_m\mo)(c\mo)^{-1}$, we see that $d/c$ as an element of $\Aa$ lies in a fixed compact set, so there are finitely many choices for $d/c$ and consequently for $d$ as well. In the end, the left hand side of \eqref{eq11} takes on finitely many different values, and the same is true of the left hand side of \eqref{eq10}. The proof is complete.
\end{proof}

By Lemma~\ref{lemma4}, any double coset in $X^\ast$ can be represented by a matrix of the form
$\syx\bigl(\begin{smallmatrix}\theta_i&\\&1\end{smallmatrix}\bigr)$, where $\syx\in\gP(F_\infty)$ and $|y|_\infty$ is maximal.
We shall call such matrices \emph{special}.
We can further specify these representatives by observing that, for any unit $t\in\mo^\times$ and any field element $s\in\theta_i\mo$,
\[\ts\yx\left(\begin{matrix}\theta_i&\\&1\end{matrix}\right)=
\left(\begin{matrix}t_\infty y&t_\infty x+s_\infty\\&1\end{matrix}\right)\left(\begin{matrix}\theta_i&\\&1\end{matrix}\right)
\left(\begin{matrix}t\fin&\theta_i^{-1}s\fin\\&1\end{matrix}\right).\]
Indeed, the leftmost matrix lies in $\GL_2(F)$ and the rightmost matrix lies in $K^\ast$, hence we can replace $y$ by $t_\infty y$ and $x$ by $t_\infty x+s_\infty$ without changing $|y|_\infty$ and the double coset represented. In particular, we can restrict $y\in F_\infty^\times$ to a fixed fundamental domain for $F_\infty^\times\slash\mo^\times$ and $x\in F_\infty$ to a fixed fundamental domain for $F_\infty\slash\theta_i\mo$. We can further tweak $y\in F_\infty^\times$ by replacing each component $y_v$ by its absolute value $|y_v|$, thanks to the observation
\begin{equation}\label{rotate}
\left(\begin{matrix}y_v/|y_v|&\\&1\end{matrix}\right)\in K_v^\ast,\qquad v\mid\infty.
\end{equation}
Again, $|y|_\infty$ is invariant under such a replacement. In this way, using appropriate fundamental domains for $F_\infty^\times\slash\mo^\times$ and $F_\infty\slash\theta_i\mo$, we obtain a set of representatives $\FF(\mn)\subset\GL_2(\Aa)$ for $X^\ast$ consisting of special matrices $\syx\bigl(\begin{smallmatrix}\theta_i&\\&1\end{smallmatrix}\bigr)$ such that the components of $y\in F_\infty^\times$ and $x\in F_\infty$ satisfy (cf.\ Remark~\ref{fundamentaldomainremark}):
\begin{equation}\label{fund}0<y_v\asymp|y|_\infty^{1/n}\quad\text{and}\quad x_v\ll 1,\qquad v\mid\infty.\end{equation}
By possibly shrinking $\FF(\mn)$ further, we could get a true fundamental domain for $X^\ast$ (i.e.\ a nice subset of $\GL_2(\Aa)$ representing each double coset in $X^\ast$ exactly once), but the above construction is sufficient for our purposes.

Now we consider the $2n$-dimensional $\RR$-algebra
\begin{equation}\label{mm}
\MM:=\prod_{\text{$v$ real}}\CC\prod_{\text{$v$ complex}}\HH,
\end{equation}
which becomes a Euclidean space if we postulate that the standard basis (the union of the bases $\{1,i\}$ and $\{1,i,j,k\}$ of the various factors $\CC$ and $\HH$ embedded into $\MM$ as subspaces) is orthonormal. Within this space, we consider the generalized upper half-space (cf.\ \eqref{h2} and \eqref{h3})
\begin{equation}\label{hb}
\Hb:=\prod_{\text{$v$ real}}\Hh^2\prod_{\text{$v$ complex}}\Hh^3,
\end{equation}
and to each special matrix $\syx\bigl(\begin{smallmatrix}\theta_i&\\&1\end{smallmatrix}\bigr)\in\FF(\mn)$, we associate the point (cf.\ \eqref{fund})
\begin{equation}\label{associateP}
P=P(x,y):=\prod_{\text{$v$ real}}\{x_v+y_vi\}\times\prod_{\text{$v$ complex}}\{x_v+y_vj\}\in\Hb
\end{equation}
and the $2n$-dimensional $\ZZ$-lattice
\begin{equation}\label{LambdaP}
\Lambda(P):=\{cP+d:c,d \in \mo\}\subset\MM.
\end{equation}
Note that a given point $P\in\Hb$ corresponds to at most $h$ elements of $\FF(\mn)$ in the above sense.

The next lemma, a generalization of \cite[Lemma~2]{BHM}, shows in terms of these lattices that $\FF(\mn)$ has good diophantine properties.

\begin{lemma}\label{lemma5} Let $\syx\bigl(\begin{smallmatrix}\theta_i&\\&1\end{smallmatrix}\bigr)\in\FF(\mn)$ with corresponding point $P=P(x,y)\in\Hb$. Then the lattice $\Lambda(P)\subset\MM$ and its successive minima $m_1\leq m_2\leq\cdots\leq m_{2n}$ satisfy:
\begin{itemize}
\item[(a)] $m_1m_2\cdots m_{2n}\asymp |y|_\infty$;
\item[(b)] $|y|_\infty\gg m_1^{2n}\gg (\N\mn)^{-1}$;
\item[(c)] $m_1\asymp m_2\asymp\dots\asymp m_n$ and $m_{n+1}\asymp m_{n+2}\asymp\dots\asymp m_{2n}$;
\item[(d)] in any ball of radius $R$ the number of lattice points is $\ll 1+R^n(\N\mn)^{1/2}+R^{2n}|y|_\infty^{-1}$.
\end{itemize}
\end{lemma}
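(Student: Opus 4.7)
The plan is to handle the four items using standard geometry-of-numbers tools together with two features specific to our setup: the $\mo$-module structure of $\Lambda(P)$, and the maximality of $|y|_\infty$ built into the choice of special matrices in $\FF(\mn)$. For (a), I would split $\MM=V_1\oplus V_2$ orthogonally into the ``real'' and ``imaginary'' directions (each isomorphic to $F_\infty$ as a Euclidean space via Minkowski). In these coordinates $cP+d$ corresponds to $(cx+d,\,cy)$, so $\Lambda(P)$ is the image of $\mo\oplus\mo$ under an $\RR$-linear map of Jacobian determinant $|y|_\infty$; hence $\operatorname{covol}(\Lambda(P))\asymp|y|_\infty$ and Minkowski's second theorem yields $m_1\cdots m_{2n}\asymp|y|_\infty$. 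The bound $|y|_\infty\gg m_1^{2n}$ in (b) is then automatic from $m_j\geq m_1$.

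The $(\N\mn)^{-1}$ lower bound in (b) is where I would work hardest. If $v=cP+d\ne 0$ has $c=0$, then $\|d\|^{2n}\gg|d|_\infty^2\geq 1$ by AM-GM and integrality. Otherwise $c\neq 0$, and I would complete $(c,d)$ to a matrix $\sabcd\in\GL_2(F)$ and run the height identity from the proof of Lemma~\ref{lemma4}: the alternative decomposition with $\tilde\gamma:=\gamma\sabcd^{-1}$ gives
\[
|\tilde y|_\infty\asymp\frac{|y|_\infty}{\prod_{v\mid\infty}(|c_vy_v|^2+|c_vx_v+d_v|^2)^{[F_v:\RR]}}\cdot F_{\text{fin}}(c,d,\theta_i,\mn),
\]
where $F_{\text{fin}}$ bundles the $\gcd$ and Atkin--Lehner ramification contributions coming from Lemma~\ref{lemma2}, and satisfies $F_{\text{fin}}\gg(\N\mn)^{-1}$ uniformly. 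Maximality of $|y|_\infty$ forces $|\tilde y|_\infty\leq|y|_\infty$, whence $\prod_v A_v^{[F_v:\RR]}\gg(\N\mn)^{-1}$ with $A_v:=|c_vy_v|^2+|c_vx_v+d_v|^2$. Weighted AM-GM in the form $(\sum_v A_v)^n\gg\prod_v A_v^{[F_v:\RR]}$ then upgrades this to $\|cP+d\|^{2n}\gg(\N\mn)^{-1}$.

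For (c), I would fix an integral basis $\omega_1,\ldots,\omega_n$ of $\mo$ with $\max_v|\omega_{i,v}|=O(1)$. The $2n$ lattice vectors $\omega_1,\ldots,\omega_n,P\omega_1,\ldots,P\omega_n$ are $\RR$-linearly independent in $\Lambda(P)$ (the first $n$ spanning $V_1$; the latter $n$ projecting isomorphically onto $V_2$ via $P\omega_i\mapsto y\omega_i$), with norms $\asymp 1$ and $\asymp|y|_\infty^{1/n}$ respectively by \eqref{fund}. Matching lower bounds come from $\|v\|\gg 1$ when $c=0$, and $\|v\|\gg|y|_\infty^{1/n}$ when $c\ne 0$ (AM-GM on the imaginary component $\|cy\|$, together with $|c|_\infty\geq 1$). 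These squeeze $m_1\asymp\cdots\asymp m_n\asymp\min(1,|y|_\infty^{1/n})$ and $m_{n+1}\asymp\cdots\asymp m_{2n}\asymp\max(1,|y|_\infty^{1/n})$. Finally, (d) follows from the standard count of lattice points in a ball, $\ll\prod_j(1+R/m_j)=(1+R/m_1)^n(1+R/m_{n+1})^n$: expanding and using (a) to write $(m_1m_{n+1})^n\asymp|y|_\infty$ and (b) to write $m_1^n\gg(\N\mn)^{-1/2}$ bounds the expansion by its three ``corner'' terms $1+R^n(\N\mn)^{1/2}+R^{2n}/|y|_\infty$.

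The main obstacle will be the sharp $(\N\mn)^{-1}$ loss in (b). Extracting it demands careful bookkeeping of Lemma~\ref{lemma2}'s finite-place formulas -- in particular the $|\varpi_\mpr|_\mpr$ correction at primes $\mpr\mid\mn$ in the Atkin--Lehner case, against the $\gcd$ contributions from the remaining primes and the adelic bookkeeping of the $\theta_i$'s -- and verifying that no combination of these produces a loss worse than $(\N\mn)^{-1}$. This sharp loss is precisely what then propagates to the middle term $R^n(\N\mn)^{1/2}$ in~(d).
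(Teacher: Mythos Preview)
Your treatments of (a), (b), and (d) match the paper's essentially line for line: covolume plus Minkowski's second theorem for (a); the height computation via Lemmata~\ref{lemma1}--\ref{lemma3} combined with the maximality of $|y|_\infty$ and weighted AM--GM for (b); and the standard successive-minima lattice count collapsed via (a)--(c) for (d). These are fine.

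The gap is in (c). Your claim $\|P\omega_i\|\asymp|y|_\infty^{1/n}$ is false once $|y|_\infty<1$: since \eqref{fund} gives only $x_v\ll 1$ with no lower bound forced to be small, one has
\[
\|P\omega_i\|^2=\sum_{v\mid\infty}|\omega_{i,v}|^2\bigl(|x_v|^2+y_v^2\bigr)\asymp 1+|y|_\infty^{2/n}\asymp 1,
\]
so all $2n$ of your explicit vectors have norm $\asymp 1$. Your lower bounds likewise give only $m_1\gg|y|_\infty^{1/n}$ and (for $m_{n+1}$) that one of any $n+1$ independent vectors has $c\neq 0$ and hence norm $\gg|y|_\infty^{1/n}$; neither forces $m_{n+1}\gg 1$. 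The ``squeeze'' therefore degenerates to $|y|_\infty^{1/n}\ll m_1\leq m_{2n}\ll 1$, which establishes neither $m_n\asymp m_1$ nor $m_{2n}\asymp m_{n+1}$. The regime $|y|_\infty\ll 1$ is genuinely present by (b), so this cannot be dismissed. In fact your sharper assertion $m_1\asymp\min(1,|y|_\infty^{1/n})$ is stronger than what the lemma claims and is not determined by (a)--(b): those only pin down $m_1 m_{2n}\asymp|y|_\infty^{1/n}$.

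The paper avoids this entirely by exploiting the $\mo$-module structure of $\Lambda(P)$ rather than explicit vectors. Take any nonzero $Q\in\Lambda(P)$ with $\|Q\|\leq m_1$; then $u_1Q,\dots,u_nQ$ (for a $\ZZ$-basis $u_1,\dots,u_n$ of $\mo$) are $n$ independent lattice vectors of norm $\ll m_1$, whence $m_n\ll m_1$. For the second block, among $n+1$ independent vectors of norm $\leq m_{n+1}$ two, say $Q,Q'$, must be $\mo$-independent (else all lie in a rank-one $\mo$-module of $\RR$-dimension $n$), and then $\{u_iQ,u_iQ'\}$ furnishes $2n$ independent vectors of norm $\ll m_{n+1}$, giving $m_{2n}\ll m_{n+1}$. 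This argument is insensitive to the size of $|y|_\infty$ and is the missing ingredient in your proposal.
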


\begin{proof} (a) Let $\{u_1,\dots,u_n\}$ be a $\ZZ$-basis of $\mo$. Then $\{u_1,\dots,u_n,u_1P,\dots,u_nP\}$ is a $\ZZ$-basis of $\Lambda(P)$, and calculating its exterior product coupled with Minkowski's theorem~\cite[Th.~3 on p.~124]{GL} shows that
\[m_1m_2\cdots m_{2n}\asymp\vol(\MM\slash\Lambda(P))=|y|_\infty\vol(F_\infty\slash\mo)^2\asymp|y|_\infty.\]

(b) The bound $|y|_\infty\gg m_1^{2n}$ is clear from part (a), so it suffices to show $m_1^{2n}\gg (\N\mn)^{-1}$. Let $(c,d)\in\mo^2$ be any pair distinct from $(0,0)$. We choose any pair $(a,b)\in F^2$ such that $ad-bc\neq 0$, and we consider the translated matrix $\sabcd\syx\bigl(\begin{smallmatrix}\theta_i&\\&1\end{smallmatrix}\bigr)\in\GL_2(\Aa)$. According to Lemma~\ref{lemma3}, we have a decomposition
\[\abcd\yx\left(\begin{matrix}\theta_i&\\&1\end{matrix}\right)=
\ts\yxt\left(\begin{matrix}\theta_j&\\&1\end{matrix}\right)k,\]
where $\sts\in\gP(F)$, $\syxt\in\gP(F_\infty)$, and $k\in K^\ast$.
The key observation here is that $\syx\bigl(\begin{smallmatrix}\theta_i&\\&1\end{smallmatrix}\bigr)$ and $\bigl(\begin{smallmatrix}\tilde y&\tilde x\\&1\end{smallmatrix}\bigr)\bigl(\begin{smallmatrix}\theta_j&\\&1\end{smallmatrix}\bigr)$ represent the same double coset in
$X^\ast$, hence $|\tilde y|_\infty\leq|y|_\infty$ by the very assumption $\syx\bigl(\begin{smallmatrix}\theta_i&\\&1\end{smallmatrix}\bigr)\in\FF(\mn)$.
Now we proceed somewhat similarly as in the proof of Lemma~\ref{lemma4}. That is, we calculate the height of both sides, using Lemmata~\ref{lemma1}--\ref{lemma3}. On the right hand side, we get $\asymp|\tilde y|_\infty$ by Lemma~\ref{lemma3}. On the left hand side, the product of the local factors $|ad-bc|_v$ coming from \eqref{eq4} and \eqref{eq6} equals $1$ by $ad-bc\in F^\times$ (cf.~\cite[Th.~5 in Ch.~IV-4]{We}). The product of the other factors coming from \eqref{eq6} is $\gg(\N\mn)^{-1}$ due to the facts that $c,d\in\mo$ are integers and $\syx$ has no finite components. So, we can focus on the remaining factors coming from \eqref{eq4}, and we conclude that
\[\frac{|y|_\infty(\N\mn)^{-1}}{\prod_{v\mid\infty}(|c_vy_v|^2+|c_vx_v+d_v|^2)^{[F_v:\RR]}}\ll|\tilde y|_\infty\leq |y|_\infty.\]
Comparing the two sides,
\[\prod_{v\mid\infty}(|c_vy_v|^2+|c_vx_v+d_v|^2)^{[F_v:\RR]}\gg(\N\mn)^{-1}.\]
Regarding the left hand side as a product of $\sum_{v\mid\infty}[F_v:\RR]=n$ factors, the inequality between the arithmetic and geometric mean readily yields
\[\biggl(\sum_{v\mid\infty}(|c_vy_v|^2+|c_vx_v+d_v|^2)\biggr)^n\gg(\N\mn)^{-1}.\]
The sum on the left hand side equals the squared Euclidean norm $\|cP+d\|^2$, hence we conclude
\[\|cP+d\|^{2n}\gg(\N\mn)^{-1}.\]
This holds for all pairs $(c,d)\in\mo^2$ distinct from $(0,0)$, so $m_1^{2n}\gg (\N\mn)^{-1}$ as stated.

(c) It suffices to show that $m_n\ll m_1$ and $m_{2n}\ll m_{n+1}$, and for this we utilize the left $\mo$-invariance of $\Lambda(P)$.
To prove the first relation, we take a nonzero lattice point $Q\in\Lambda(P)$ of Euclidean norm at most $m_1$. Then $\mo Q$ is an $n$-dimensional sublattice of $\Lambda(P)$ with $\ZZ$-basis $\{u_1Q,\dots,u_nQ\}$, where $\{u_1,\dots,u_n\}$ is a $\ZZ$-basis of $\mo$ as before. Therefore,
\[m_n\leq\max\{\|u_1Q\|,\dots,\|u_nQ\|\}\ll\|Q\|\leq m_1.\]
To prove the second relation, we take $\ZZ$-independent lattice points $Q_1,\dots,Q_{n+1}\in\Lambda(P)$ of Euclidean norms at most $m_{n+1}$.
At least two of these, say $Q$ and $Q'$, must be $\mo$-independent, and then $\mo Q+\mo Q'$ is a $2n$-dimensional sublattice of $\Lambda(P)$ with $\ZZ$-basis $\{u_1Q,\dots,u_nQ,u_1Q',\dots,u_nQ'\}$. Therefore,
\[m_{2n}\leq\max\{\|u_1Q\|,\dots,\|u_nQ\|,\|u_1Q'\|,\dots,\|u_nQ'\|\}\ll\max\{\|Q\|,\|Q'\|\}\leq m_{n+1}.\]

(d) Let $B\subset\MM$ be any ball of radius $R>0$ (with respect to the Euclidean norm). Then, by a lattice
point counting argument using successive minima (see e.g.\ \cite[Lemma~1]{BHM}),
\[\#\bigl(\Lambda(P)\cap B\bigr)\ll 1 + \frac{R}{m_1} + \frac{R^2}{m_1m_2} + \dots + \frac{R^{2n}}{m_1 m_2\cdots m_{2n}}.\]
Denoting by $t_k$ the degree $k$ term, we can estimate by part (c)
\[t_k\ll\begin{cases}
t_n^{k/n}\leq\max(1,t_n)&\qquad\text{when $0\leq k\leq n$},\\
t_n\left(t_{2n}/t_n\right)^{(k-n)/n}\leq\max(t_n,t_{2n})&\qquad\text{when $n\leq k\leq 2n$},
\end{cases}\]
hence our bound simplifies to
\[\#\bigl(\Lambda(P)\cap B\bigr)\ll 1 + \frac{R^n}{m_1 m_2\cdots m_n} + \frac{R^{2n}}{m_1 m_2\cdots m_{2n}}.\]
The first denominator is $\gg(\N\mn)^{-1/2}$ by part (b), and the second denominator is $\asymp |y|_\infty$ by part (a), so the stated bound follows.
\end{proof}

\section{Counting the field elements}\label{sec-counting}

In this section, we collect some convenient counting bounds in the number field $F$. These bounds are rather standard but indispensable for our goals, and we present them in a consistent adelic language.

\begin{lemma}\label{lemma6} Let $t\in\Aa^\times$ be an arbitrary idele. Then
\begin{itemize}
\item[(a)] $\#\{x\in F^\times:\text{$|x|_v\leq |t_v|_v$ for all places $v$}\}\ll |t|_\Aa$;
\item[(b)] $\#\{x\in F^\times:\text{$|x|_v\leq |t_v|_v$ for all $v\mid\infty$ and $|x|_v=|t_v|_v$ for all $v\nmid\infty$}\}\ll_\eps |t|_\Aa^\eps$ for every $\eps>0$.
\end{itemize}
\end{lemma}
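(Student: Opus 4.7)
\emph{Proof plan.} Both parts reduce to lattice-point counts, and the unifying tool is the product formula $\prod_v |x|_v = 1$ valid for $x \in F^\times$.

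The preliminary observation is that in both (a) and (b) the set under consideration is empty whenever $|t|_\Aa < 1$: multiplying the constraints $|x|_v \leq |t_v|_v$ across all places (in (b) combining the archimedean inequalities with the non-archimedean equalities) gives $\prod_v |x|_v \leq |t|_\Aa$, contradicting $\prod_v |x|_v = 1$. So we may assume $|t|_\Aa \geq 1$ throughout.

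For (a), I would interpret the finite constraints $|x|_\mpr \leq |t_\mpr|_\mpr$ as the condition that $x$ lies in a certain fractional ideal $\ma \subset F$ attached to $t\fin$, whose norm satisfies $\N\ma \asymp |t\fin|\fin^{-1}$. Viewed archimedeanly, $\ma$ is a full-rank lattice in $F_\infty$ of covolume $\asymp |t\fin|\fin^{-1}$, and the remaining archimedean conditions cut out a convex symmetric box of volume $\asymp |t_\infty|_\infty$. A routine lattice-point count then gives $\ll 1 + |t_\infty|_\infty \cdot |t\fin|\fin = 1 + |t|_\Aa$, which is $\ll |t|_\Aa$ under our assumption $|t|_\Aa \geq 1$.

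For (b), the strict equalities $|x|_\mpr = |t_\mpr|_\mpr$ at all finite places force $x$ to generate the same fractional ideal $\ma$ as $t\fin$. If $\ma$ is non-principal the count is $0$; otherwise, fixing any generator $x_0 \in F^\times$, the remaining generators are precisely $x_0 u$ with $u \in \mo^\times$. The product formula applied to $x_0$ gives $|x_0|_\infty = |t\fin|\fin^{-1}$, so the archimedean conditions translate into the system $|u|_v \leq c_v := |t_v|_v / |x_0|_v$ with $\prod_{v\mid\infty} c_v = |t|_\Aa$. The substantive step is counting units satisfying these upper bounds: I would invoke Dirichlet's unit theorem, so that the map $u \mapsto (\log|u|_v)_{v\mid\infty}$ sends $\mo^\times$ with finite kernel to a lattice of rank $r_1 + r_2 - 1$ in the trace-zero hyperplane $H \subset \RR^{r_1+r_2}$. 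The admissible logarithms form a convex region in $H$ in which each coordinate $y_v$ satisfies $\log c_v - \log|t|_\Aa \leq y_v \leq \log c_v$, the lower bound coming from $\sum_w y_w = 0$. This region has diameter $O(\log|t|_\Aa)$, so standard lattice-point counting yields $\ll 1 + (\log|t|_\Aa)^{r_1+r_2-1} \ll_\eps |t|_\Aa^\eps$, with the roots of unity contributing only a bounded multiplicative factor. The main technical care is in that last paragraph, ensuring that the convex region is truly bounded along every direction of the log-unit lattice, for which the hyperplane constraint is essential.
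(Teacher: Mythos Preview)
Your proposal is correct and matches the paper's approach: part (b) is identical (fix an element of the set, reduce to counting units, and bound those via Dirichlet's theorem in logarithmic coordinates using the trace-zero constraint), while in part (a) the paper normalizes by an element $\tilde x$ of the set and runs an adelic fundamental-domain volume argument for $\Aa/F$, which is just a repackaging of your fractional-ideal lattice count in $F_\infty$. One caution: the ``routine'' bound $\#(\Lambda\cap B)\ll 1+\vol(B)/\mathrm{covol}(\Lambda)$ is false for general lattices and thin boxes, and holds here only because a unit can balance a coordinate box against a fractional ideal---which is exactly what the paper's normalization by $\tilde x$ accomplishes.
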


\begin{proof} (a) Let $S$ be the set in part (a). If $S$ is empty, we are done. Otherwise, we fix any element $\tilde x\in S$. By assumption, the idele $s:=t/\tilde x\in\Aa^\times$ satisfies $1\leq|s_v|_v$ for all places $v$. Moreover, any $x\in S$ is determined by the field element $u:=x/\tilde x\in F^\times$, which in turn satisfies $|u|_v\leq|s_v|_v$ for all places $v$. Now we consider a fundamental parallelotope $P$ for $F_\infty\slash\mo$. Then, $\bm{P}:=P\times\hat\mo$ (cf.\ \eqref{hatmonew})
is a fundamental domain for $\Aa/F$ (see~\cite[Prop.~6 in Ch.~V-4]{We}). The translates $u+\bm{P}$ (with $u$ as before) have pairwise disjoint interiors and, for a suitable idele $y\in\Aa^\times$ depending only on $P$, they all lie in the adelic box
\[\bm{B}:=\{z\in\Aa:\text{$|z_v|_v\leq |y_vs_v|_v$ for all places $v$}\}.\]
Here we used that $|s_v|_v\geq 1$ for all places $v$, and we remark that we can take $y_v=1$ at the non-archimedean places $v$.
An adelic volume calculation now gives
\[\#S\ll_P\#S\vol(\bm{P})\leq\vol(\bm{B})=|ys|_\Aa\ll_P|s|_\Aa=|t|_\Aa,\]
and the bound (a) follows.

(b) Let $S$ be the set in part (b). If $S$ is empty, we are done. Otherwise, we fix any element $\tilde x\in S$. By assumption, the idele $s:=t/\tilde x\in\Aa^\times$ satisfies $1\leq|s_v|_v$ for all $v\mid\infty$ and $1=|s_v|_v$ for all $v\nmid\infty$. Moreover, any $x\in S$ is determined by the field element $u:=x/\tilde x\in F^\times$, which in turn satisfies $|u|_v\leq|s_v|_v$ for all $v\mid\infty$ and $|u|_v=|s_v|_v=1$ for all $v\nmid\infty$.
The second relation implies that $u\in\mo^\times$ is a unit, while the first relation implies the lower bounds
\[|u|_v=\prod_{\substack{w\mid\infty\\w\neq v}}|u|_w^{-1}\geq \prod_{\substack{w\mid\infty\\w\neq v}}|s_w|_w^{-1}=\abs{s_v}_v\cdot|s|_\infty^{-1},\qquad v\mid\infty.\]
In short,
\[\log(|s_v|_v)-\log(|s|_\infty) \leq \log(|u|_v) \leq \log(|s_v|_v),\qquad v\mid\infty.\]
With the notations \eqref{logarithmicvector}--\eqref{hyperplane}, the vector $l(u)$ lies in a fixed ball of radius $\ll\log(|s|_\infty)$ intersected with a fixed lattice in $W$ by Dirichlet's unit theorem (cf.~\cite[p.~93]{We}), hence the usual volume argument yields
\[\#l(u)\ll 1+(\log|s|_\infty)^{\dim W}\ll_\eps|s|_\infty^\eps=|s|_\Aa^\eps=|t|_\Aa^\eps.\]
However, $u$ is determined by $l(u)$ up to a root of unity (cf.~\cite[Th.~8 in Ch.~IV-4]{We}), so the bound (b) follows.
\end{proof}

We shall use Lemma~\ref{lemma6} in the following classical formulation.

\begin{corollary}\label{corollary1} Let $y\in F_\infty^\times$ be an infinite idele, and let $\mm\subseteq F$ be a nonzero fractional ideal. Then
\begin{itemize}
\item[(a)] $\#\{x\in F^\times:\text{$|x|_v\leq |y_v|_v$ for all $v\mid\infty$ and $x\mo\subseteq\mm$}\}\ll\ \ |y|_\infty/\N\mm$;
\item[(b)] $\#\{x\in F^\times:\text{$|x|_v\leq |y_v|_v$ for all $v\mid\infty$ and $x\mo=\mm$}\}\ll_\eps (|y|_\infty/\N\mm)^\eps$ for every $\eps>0$.
\end{itemize}
\end{corollary}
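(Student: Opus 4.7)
The plan is to deduce both bounds directly from Lemma~\ref{lemma6} by constructing, from the data $(y,\mm)$, a single idele $t\in\Aa^\times$ whose archimedean components are exactly those of $y$ and whose finite components encode the divisibility condition $x\mo\subseteq\mm$ (resp.~$x\mo=\mm$) as the local inequalities (resp.~equalities) appearing in Lemma~\ref{lemma6}(a) (resp.~(b)).

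Concretely, I would set $t_v:=y_v$ for $v\mid\infty$, and for each prime $\mpr$ let $t_\mpr$ be any generator of the local fractional ideal $\mm\mo_\mpr=\mpr^{v_\mpr(\mm)}\mo_\mpr$, so that $|t_\mpr|_\mpr=\N\mpr^{-v_\mpr(\mm)}$. Since $v_\mpr(\mm)=0$ for all but finitely many $\mpr$, this really defines an element of $\Aa^\times$. The equivalences
\[|x|_\mpr\leq|t_\mpr|_\mpr\iff v_\mpr(x)\geq v_\mpr(\mm)\qquad\text{and}\qquad|x|_\mpr=|t_\mpr|_\mpr\iff v_\mpr(x)=v_\mpr(\mm),\]
intersected over all primes, translate the conditions $x\mo\subseteq\mm$ and $x\mo=\mm$ into the finite-place hypotheses of Lemma~\ref{lemma6}(a) and (b), respectively, while the archimedean hypotheses match by the definition of $t_v$ at the archimedean places.

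Finally, the adelic module factors as
\[|t|_\Aa=|y|_\infty\prod_{\mpr}|t_\mpr|_\mpr=|y|_\infty\prod_{\mpr}\N\mpr^{-v_\mpr(\mm)}=|y|_\infty/\N\mm,\]
so Lemma~\ref{lemma6}(a) yields part (a) and Lemma~\ref{lemma6}(b) yields part (b) of the corollary. I expect no real obstacle: Lemma~\ref{lemma6} was phrased precisely to cover this kind of counting problem in adelic language, and the corollary is essentially a classical restatement in which $\mm$ is a fractional ideal rather than an integer-valued idele.
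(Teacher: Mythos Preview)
Your proof is correct and follows essentially the same approach as the paper: construct an idele $t\in\Aa^\times$ with archimedean part $y$ and finite part representing $\mm$, check that the local conditions match those of Lemma~\ref{lemma6}, compute $|t|_\Aa=|y|_\infty/\N\mm$, and apply Lemma~\ref{lemma6}(a) and (b). The only cosmetic difference is that the paper packages the finite part as a single finite idele $z$ with $\mm=z\mo$ rather than specifying it prime by prime.
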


\begin{proof} We take any finite idele $z\in\Aa\fin^\times$ such that $\mm=z\mo$, and we write $t:=yz\in\Aa^\times$. Clearly,
$|y_v|_v=|t_v|_v$ for all $v\mid\infty$, while
$x\mo\subseteq\mm$ (resp.\ $x\mo=\mm$) is equivalent to $|x|_v\leq |t_v|_v$ (resp.\ $|x|_v=|t_v|_v$) for all $v\nmid\infty$. Moreover,
\[|t|_\Aa=|y|_\infty\prod_{v\nmid\infty}|z_v|_v=|y|_\infty/\N(z\mo)=|y|_\infty/\N\mm.\]
Hence the bounds (a) and (b) are immediate from the corresponding parts of Lemma~\ref{lemma6}.
\end{proof}

\begin{remark} It is a valuable feature of Lemma~\ref{lemma6} that the bounds only depend on the module $|t|_\Aa$. This type of result proved to be useful in earlier investigations of the sup-norm problem; for instance, \cite[(4.1)--(4.2)]{BM} constitute a special case of Lemma~\ref{lemma6} and Corollary~\ref{corollary1}.
\end{remark}

\section{Realness rigidity of number fields}\label{rigidity}

This section is devoted to the proof of the following result which will be used in the proof of Theorem~\ref{thm3}, whose germ is the argument at the beginning of Subsection~11.2 in \cite{BHM}.

\begin{lemma}\label{aplusd}
Let $F_0$ be the maximal totally real subfield of $F$, and put $m:=[F:F_0]$. Let $\xi\in F$ be such that $F=F_0(\xi)$. Suppose that $\xi\mo=\mk/\ml$, where $\mk,\ml\subseteq\mo$ are integral ideals. Suppose we have the bounds
\begin{equation}\label{xiwbounds2}
|\xi_v|\leq A\quad\text{and}\quad|\im\xi_v|\leq A\sqrt{\delta_v},\qquad v\mid\infty,
\end{equation}
where $A\geq 1$ and $\delta_v>0$ are arbitrary. Then
\begin{equation}\label{conclusion}
1\leq\bigl((2A)^n(\N\ml)^2\bigr)^{2(m-1)}|\delta|_\CC.
\end{equation}
\end{lemma}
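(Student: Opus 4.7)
The plan is to study the discriminant
\[D:=\prod_{i<j}(\xi^{(i)}-\xi^{(j)})^2\in K^\times\]
of the minimal polynomial of $\xi$ over $K$, where $\xi=\xi^{(1)},\dots,\xi^{(m)}$ are the $K$-conjugates of $\xi$ in a common number field $L$ (say, the Galois closure of $F/K$). I estimate $|N_{K/\QQ}(D)|$ from below via a controlled $\mo_K$-denominator and from above via the archimedean hypotheses on $\xi$; the desired inequality will follow by comparing the two.

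For the lower bound, let $\sigma_i:F\hookrightarrow L$ be the $K$-embedding sending $\xi\mapsto\xi^{(i)}$. The hypothesis $\xi\ml\subseteq\mo$ transports to $\xi^{(i)}\sigma_i(\ml)\subseteq\mo_L$, and subtracting yields $(\xi^{(i)}-\xi^{(j)})\sigma_i(\ml)\sigma_j(\ml)\subseteq\mo_L$. Squaring and multiplying over the $\binom{m}{2}$ pairs, noting that each $\sigma_k(\ml)$ participates (squared) in exactly $m-1$ such pairs, one obtains
\[D\cdot\prod_{k=1}^m\sigma_k(\ml)^{2(m-1)}\subseteq\mo_L.\]
Since the product of the $K$-conjugate ideals extends the ideal norm, $\prod_k\sigma_k(\ml)\mo_L=N_{F/K}(\ml)\mo_L$, and $D\in K$, we deduce $D\cdot N_{F/K}(\ml)^{2(m-1)}\subseteq\mo_K$; i.e., the $\mo_K$-denominator of $D$ divides $N_{F/K}(\ml)^{2(m-1)}$, so
\[|N_{K/\QQ}(D)|\geq\N_K\bigl(N_{F/K}(\ml)^{2(m-1)}\bigr)^{-1}=(\N\ml)^{-2(m-1)}.\]

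For the upper bound, fix a real place $w$ of $K$. Under $w$, the $m$ conjugates $\xi^{(i)}$ are precisely the images of $\xi$ under the $m$ $\QQ$-embeddings of $F$ extending $w$: each real place $v\mid w$ of $F$ contributes a single real image $\xi_v$, while each complex place $v\mid w$ contributes the conjugate pair $(\xi_v,\overline{\xi_v})$. Writing $b=b(w)$ for the number of complex places of $F$ above $w$, exactly $b$ of the $\binom{m}{2}$ pairs in $D_w$ have the form $(\xi_v,\overline{\xi_v})$ and contribute a factor of modulus $4|\im\xi_v|^2\leq 4A^2\delta_v$ each, while the remaining pairs are controlled by $(2A)^2$ via $|\xi_v|\leq A$. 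Hence
\[|D_w|\leq(2A)^{2(\binom{m}{2}-b)}\prod_{\substack{v\mid w\\v\text{ complex}}}4A^2\delta_v=(2A)^{m(m-1)}\prod_{\substack{v\mid w\\v\text{ complex}}}\delta_v.\]
Taking the product over the $[K:\QQ]=n/m$ real places $w$ of $K$, each complex place $v$ of $F$ appears exactly once, yielding
\[|N_{K/\QQ}(D)|=\prod_w|D_w|\leq(2A)^{n(m-1)}\prod_{v\text{ complex}}\delta_v=(2A)^{n(m-1)}|\delta|_\CC^{1/2}.\]

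Comparing the two bounds gives $(\N\ml)^{-2(m-1)}\leq(2A)^{n(m-1)}|\delta|_\CC^{1/2}$; squaring and rearranging produces precisely \eqref{conclusion}. The main technical point is the lower bound: one must convert the $\mo_F$-denominator $\ml$ of $\xi$ into a single $\mo_K$-denominator of $D$, passing through all $K$-conjugates and the squaring inherent in the discriminant---this is what yields the exponent $2(m-1)$ of $\N\ml$ and, after squaring, the factor $(\N\ml)^{2}$ raised to the power $2(m-1)$ in the final estimate. The upper bound, by contrast, is a clean local computation that directly matches the contribution of each complex place $v$ of $F$ to $|\delta|_\CC$.
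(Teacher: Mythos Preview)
Your proof is correct and follows essentially the same approach as the paper's: both arguments bound $|N_{K/\QQ}(D)|$ from below by clearing the denominator $\ml$ through all $K$-conjugates, and from above by isolating the complex-conjugate pairs $(\xi_v,\overline{\xi_v})$ at each place of $K$. The only cosmetic differences are that the paper works in the Galois closure of $F/\QQ$ (with coset notation $I\backslash H\subset I\backslash G$) and takes a $[\tilde F:K]$-th root at the end, whereas you work directly in the Galois closure of $F/K$ and invoke the relative ideal norm $N_{F/K}$; the content is identical.
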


\begin{remark} The assumption $F=F_0(\xi)$ serves convenience. Without it, \eqref{conclusion} still holds with $m$ being the degree of $\xi$ over $F_0$, and with $|\delta|_\CC$ replaced by the relevant subproduct restricted to the places of $F$ that lie over the \emph{complex} places of $F_0(\xi)$. In this more general formulation, the result says that if $\xi\in F$ is ``close to being totally real'', then it is totally real. Thus the result expresses a certain ``rigidity of realness'' in number fields.
\end{remark}

\begin{remark}
If $F=\QQ(i)$, and $\xi=\kappa/\lambda$ with $\kappa,\lambda\in\ZZ[i]$ satisfying $|\kappa|\ll|\lambda|\asymp\Lambda$, then the above statement says that if $|\im\xi|\ll\Lambda^{-2}$ with a sufficiently small implied constant, then $\xi\in\QQ$. This is precisely the crucial input in \cite[Subsection~11.2]{BHM}, which in this special case is completely elementary.
\end{remark}

\begin{proof} We shall assume that $m\geq 2$, because \eqref{conclusion} is trivially true for $m=1$ (in this case the right hand side equals $1$).
Let $\tilde F$ be the Galois closure of $F$ with ring of integers $\tilde\mo$, and consider the Galois groups
\[G:=\Gal(\tilde F/\QQ),\qquad H:=\Gal(\tilde F/F_0),\qquad I:=\Gal(\tilde F/F).\]
Note that $I\leq H\leq G$, and
\begin{equation}\label{Galoisindex}
[G:I]=[F:\QQ]=n,\qquad [G:H]=[F_0:\QQ]=n/m,\qquad [H:I]=[F:F_0]=m.
\end{equation}
Indeed, the elements of $I\backslash G$ correspond to the $n$ embeddings $F\hookrightarrow\tilde F$, the elements of $H\backslash G$ correspond to the $n/m$ embeddings $F_0\hookrightarrow\tilde F$, and the elements of $I\backslash H$ correspond to the $m$ conjugates of $\xi$ over $F_0$ via the right action $\sigma\mapsto \xi^\sigma$ of $H$ on $\xi$. In particular,
\[p(X):=\prod_{\alpha\in I\backslash H}\bigl(X-\xi^\alpha\bigr)\]
is the minimal polynomial of $\xi$ over $F_0$ (a separable polynomial of degree $m$), and
\[\Delta:=\prod_{\substack{{\alpha,\beta\in I\backslash H}\\{\alpha\neq\beta}}}\bigl(\xi^\alpha-\xi^\beta\bigr)\]
is the discriminant of $p(X)$. Note that right multiplication by any $\sigma\in H$ permutes the elements of $I\backslash H$, hence $H$ fixes $\Delta$, and so $\Delta\in F_0^\times$. The key to deriving \eqref{conclusion} is to examine the absolute norm $\bigl|N_{F_0/\QQ}(\Delta)\bigr|\in\QQ^\times$, which is a \emph{positive rational} number.

Fixing an embedding $\tilde F\hookrightarrow\CC$, we get that (cf.\ \cite[Cor.~3 to Prop.~4 in Ch.~III-3]{We})
\[\bigl|N_{F_0/\QQ}(\Delta)\bigr|=\prod_{\gamma\in H\backslash G}\bigl|\Delta^\gamma\bigr|=
\prod_{\substack{{\alpha,\beta\in I\backslash H}\\{\alpha\neq\beta}}}
\prod_{\gamma\in H\backslash G}\bigl|\xi^{\alpha\gamma}-\xi^{\beta\gamma}\bigr|.\]
The products $\alpha\gamma$ and $\beta\gamma$ run through distinct representatives of $I\backslash G$ such that $H\alpha\gamma=H\beta\gamma$ (namely $H\alpha\gamma$ and $H\beta\gamma$ are equal to $H\gamma$), hence they correspond to distinct embeddings $\sigma,\tau:F\hookrightarrow\CC$
which agree on~$F_0$. We infer
\begin{equation}\label{norm1}
\bigl|N_{F_0/\QQ}(\Delta)\bigr|=
\prod_{\substack{{\sigma,\tau:F\hookrightarrow\CC}\\{\sigma\neq\tau}\\{\sigma\restriction_{F_0}=\tau\restriction_{F_0}}}}\bigl|\xi^\sigma-\xi^\tau\bigr|.
\end{equation}
On the right hand side, there are $(n/m)m(m-1)=n(m-1)$ factors by \eqref{Galoisindex}, all of which are at most $2A$ by the first part of \eqref{xiwbounds2}. By definition, each complex place $v$ of $F$ can be identified with an \emph{unordered} complex conjugate pair $\{\sigma,\tau\}$ of distinct embeddings $\sigma,\tau:F\hookrightarrow\CC$. For such a pair $\{\sigma,\tau\}$, we have $\{\xi^\sigma,\xi^\tau\}=\{\xi_v,\ov{\xi_v}\}$, while the condition $\sigma\rest_{F_0}=\tau\rest_{F_0}$ is automatic as $F_0$ is totally real (the restricted embedding $F_0\hookrightarrow\RR$ corresponds to the real place of $F_0$ below $v$). Hence each complex place $v$ of $F$ contributes \emph{two} factors $|\xi_v-\ov{\xi_v}|$ to the product in \eqref{norm1}, and these are at most $2A\sqrt{\delta_v}$ by the second part of \eqref{xiwbounds2}. Bounding the other factors $|\xi^\sigma-\xi^\tau\bigr|$ in \eqref{norm1} by $2A$ as remarked initially, we obtain in the end
\begin{equation}\label{norm2}
\bigl|N_{F_0/\QQ}(\Delta)\bigr|\leq (2A)^{n(m-1)}|\delta|_\CC^{1/2}.
\end{equation}

On the other hand, denoting $\tilde\mk:=\mk\tilde\mo$ and $\tilde\ml:=\ml\tilde\mo$, the nonzero fractional ideal in $\tilde F$ given by
\[\Delta\prod_{\substack{{\alpha,\beta\in I\backslash H}\\{\alpha\neq\beta}}}\bigl(\tilde\ml^\alpha\tilde\ml^\beta\bigr)=
\prod_{\substack{{\alpha,\beta\in I\backslash H}\\{\alpha\neq\beta}}}\bigl(\xi^\alpha-\xi^\beta\bigr)\bigl(\tilde\ml^\alpha\tilde\ml^\beta\bigr)
\subseteq
\prod_{\substack{{\alpha,\beta\in I\backslash H}\\{\alpha\neq\beta}}}\bigl(\tilde\mk^\alpha\tilde\ml^\beta+\tilde\ml^\alpha\tilde\mk^\beta\bigr)
\subseteq\tilde\mo\]
is integral, hence its norm is at least $1$. The norm of each factor $\tilde\ml^\alpha\tilde\ml^\beta\subseteq\tilde\mo$ on the left hand side equals
\[\N\bigl(\tilde\ml^\alpha\bigr)\N\bigl(\tilde\ml^\beta\bigr)=\bigl(\N\tilde\ml\bigr)^2=(\N\ml)^{2[\tilde F:F]},\]
hence we get
\[1\leq\bigl|N_{\tilde F/\QQ}(\Delta)\bigr|(\N\ml)^{2m(m-1)[\tilde F:F]}=\bigl|N_{\tilde F/\QQ}(\Delta)\bigr|(\N\ml)^{2(m-1)[\tilde F:F_0]}.\]
Raising the two sides to power $[\tilde F:F_0]^{-1}$, and combining with \eqref{norm2}, we conclude
\[1\leq\bigl|N_{F_0/\QQ}(\Delta)\bigr|(\N\ml)^{2(m-1)}\leq\bigl((2A)^n(\N\ml)^2\bigr)^{(m-1)}|\delta|_\CC^{1/2}.\]
Finally, by squaring the two sides, \eqref{conclusion} follows.
\end{proof}

\section{The Fourier bound}\label{fourierbound}

We denote by $t_v\in\RR\cup\RR i$ the \emph{spectral parameter} of $\phi$ at the archimedean place $v$, so that
the Laplace eigenvalue of $\phi$ at $v$ equals
\begin{equation}\label{tparameter}
\lambda_v=\begin{cases}
1/4+t_v^2&\text{when $v$ is real},\\
1+4t_v^2&\text{when $v$ is complex}.
\end{cases}\end{equation}
The archimedean Ramanujan conjecture states that $t_v\in\RR$, while it is known \cite[Th.~1]{BB} that
\[t_v\in\RR\cup[-7/64,7/64]i.\]
For convenience, we also introduce the tuple $T:=(T_v)_{v\mid\infty}$, where
\begin{equation}\label{Tparameter}
T_v:=\max(1/2,|t_v|)\asymp\sqrt{\lambda_v}.
\end{equation}
The aim of this section is to prove the following bound.

\begin{lemma}\label{lemma7} Let $\syx\in P(F_{\infty})$ and $i\in\{1,\dots,h\}$. Then for any $\eps>0$ we have
\begin{equation}\label{Fbound}
\phi\left(\yx\begin{pmatrix}\theta_i \\ & 1\end{pmatrix}\right)
\ll_{\eps}\left(|T|_{\infty}^{1/6}+|T/y|_{\infty}^{1/2}\right)^{1+\eps}(\N\mn)^{\eps}.
\end{equation}
\end{lemma}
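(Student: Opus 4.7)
The plan is to bound $\phi$ via its Fourier--Whittaker expansion at the cusp corresponding to $\theta_i$, combined with Rankin--Selberg control on the second moment of Hecke eigenvalues and standard archimedean estimates on the local spherical Whittaker function.

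Concretely, one writes
\[
\phi\left(\yx\begin{pmatrix}\theta_i\\&1\end{pmatrix}\right)=\sum_{\alpha}\frac{\lambda((\alpha)\theta_i\md)}{\sqrt{|\alpha|_\infty}}\,W_\infty(\alpha y)\,\psi_\infty(\alpha x),
\]
summed over nonzero $\alpha\in(\theta_i\md)^{-1}$ modulo the totally positive unit group $\mo^\times_+$, where $\lambda(\mm)$ denotes the normalized Hecke eigenvalues on integral ideals, $W_\infty=\prod_{v\mid\infty}W_v$ is the product of local spherical Whittaker functions at the spectral parameters $t_v$, and $\psi_\infty$ is the archimedean component of a fixed additive character of $F\backslash\Aa$. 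The two main inputs are: (i) the Rankin--Selberg upper bound
\[
\sum_{\N\mm\leq X}|\lambda(\mm)|^2\ll_\eps X(|T|_\infty\N\mn)^\eps,
\]
which supplies the $(\N\mn)^\eps$ factor via the conductor of the adjoint $L$-function; (ii) the standard local Whittaker bounds at each $v\mid\infty$: super-polynomial decay of $W_v(u)$ for $|u|_v\gg T_v^{1+\eps}$, the Airy-type uniform pointwise bound $|W_v(u)|\ll T_v^{-1/6}$ in a transition window around $|u|_v\asymp T_v$, and $L^2$-integrability in the oscillatory range $|u|_v<T_v$ via Mellin--Plancherel.

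Having truncated the sum to the effective range $|\alpha y_v|_v\ll T_v^{1+\eps}$ at each $v\mid\infty$, one bounds the remaining finite sum via Cauchy--Schwarz in two complementary ways. The ``convexity'' contribution $|T/y|_\infty^{1/2+\eps}$ arises from pairing $\lambda(\mm)$ against $W_\infty$ and using the $L^2$-normalization of $W_v$ combined with Rankin--Selberg. The improved contribution $|T|_\infty^{1/6+\eps}$ arises by pulling out the Airy pointwise bound $|W_\infty(u)|\ll|T|_\infty^{-1/6+\eps}$ across the product transition window and applying Cauchy--Schwarz to the restricted sum of Hecke eigenvalues over this narrow band; the interplay of the counting function, the normalization $\sqrt{|\alpha|_\infty}$, and Rankin--Selberg then reproduces exactly $|T|_\infty^{1/6+\eps}$. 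Summing the two contributions, together with the negligible tail from $|\alpha y_v|_v\gg T_v^{1+\eps}$, yields the claim.

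The main obstacle is the archimedean bookkeeping: one must combine the local Whittaker asymptotics consistently across all $r_1+r_2$ archimedean places simultaneously, and handle the infinite unit group $\mo^\times$ (nontrivial unless $F\in\{\QQ,\QQ(i)\}$). The unit-group issue is addressed by choosing a fundamental domain for $\mo^\times_+$ acting on $F_\infty^\times$, analogously to the treatment in Section~\ref{sec-counting}, together with the observation that the magnitude $|W_v(\alpha_v y_v)|$ depends on $\alpha_v$ only through $|\alpha_v y_v|_v$. Consequently, the magnitude of each Fourier term is intrinsically a function of the integral ideal $(\alpha)\theta_i\md$ and the adelic module $|\alpha|_\infty$, allowing the Fourier-side sum and the ideal-indexed Rankin--Selberg bound to be matched up cleanly.
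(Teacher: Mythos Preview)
Your overall strategy---Fourier--Whittaker expansion, Cauchy--Schwarz, Rankin--Selberg control of $\sum|\lambda(\mm)|^2$, and archimedean Whittaker asymptotics---matches the paper's, and your list of ingredients is correct. But there is a genuine gap at precisely the point you flag as the ``main obstacle'', namely combining the local estimates across all archimedean places. A preliminary correction: the Fourier sum runs over \emph{all} nonzero $\alpha\in(\theta_i\md)^{-1}$, not over unit orbits, because $W_\infty(\alpha y)$ depends on each $|\alpha_v|$ separately; in particular your claim that the magnitude of a Fourier term is a function of the ideal $(\alpha)\theta_i\md$ and of $|\alpha|_\infty$ alone is false.

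The decisive missing step is the \emph{balancing of $y$}. After Cauchy--Schwarz and the natural box decomposition of the Whittaker sum, one is left with a product $\prod_{v\mid\infty}\bigl(|T_v|_v^{1/3}+|T_v/y_v|_v\bigr)$, and this is in general strictly larger than $|T|_\infty^{1/3}+|T/y|_\infty$ whenever the local ratios $|y_v|_v/|T_v|_v^{2/3}$ straddle~$1$. For instance, over a real quadratic field with $T_1=T_2=T$, $y_1=T$, $y_2=1$, the product is $\asymp T^{4/3}$ while $|T|_\infty^{1/3}+|T/y|_\infty\asymp T$. The paper's fix (equation~\eqref{yassumption}) is to first replace $y$ by $uy$ for a unit $u\in\mo^\times$ chosen so that $|y_v|_v\asymp|T_v|_v^{\log|y|_\infty/\log|T|_\infty}$ at every $v\mid\infty$; this leaves $|\phi|$, $|y|_\infty$, and the asserted bound unchanged, and forces all places into the same regime, so the product of local factors collapses to the desired sum. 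Neither your ``two complementary Cauchy--Schwarz'' split nor a fundamental-domain choice for $\alpha$ substitutes for this device; without it the argument does not reach the stated exponent.
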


\begin{proof}
Our starting point is the Fourier--Whittaker decomposition (cf.\ \cite[Subsection~2.3.2]{Ma})
\begin{equation}\label{fourier}
\phi\left(\yx\begin{pmatrix}\theta_i \\ & 1\end{pmatrix}\right)=
\rho(\mo)\sum_{0\neq n\in \theta_i^{-1}\md^{-1}} \frac{\lambda(n\theta_i \md)}{\sqrt{\N(n\theta_i \md)}} W(ny) \psi(nx).
\end{equation}
Here, $\md$ is the different ideal of $F$, $\rho(\mo)$ is a nonzero constant (``the first Fourier coefficient''), $\lambda(\mm)$ are the Hecke eigenvalues introduced in \eqref{lambdadef},
\[W(\xi):=\prod_{v\mid \infty} W_{v}(\xi_v),\qquad
W_{v}(\xi_v):=\begin{cases} \frac{|\xi_v|_v^{1/2} K_{it_v}(2\pi|\xi_v|)}{|\Gamma(1/2+it_v)\Gamma(1/2-it_v)|^{1/2}}, & \text{$v$ real}, \\ \frac{|\xi_v|_v^{1/2} K_{2it_v}(4\pi|\xi_v|)}{|\Gamma(1+2it_v)\Gamma(1-2it_v)|^{1/2}}, & \text{$v$ complex}, \end{cases}\]
is an $L^2$-normalized (weight zero) Whittaker function for an appropriate Haar measure on $F_\infty^\times$, and
\[\psi(\xi):=\prod_{\text{$v$ real}} e^{2\pi i \xi_v} \prod_{\text{$v$ complex}} e^{2\pi i(\xi_v+\overline{\xi_v})}\]
is the corresponding (appropriately normalized) additive character on $F_\infty$.

The local Whittaker function $W_v$ satisfies (even for $T_v=1/2$)
\begin{equation}\label{local_bounds}
W_{v}(\xi_v) \ll \begin{cases} \min(T_v^{1/6},T_v^{1/4}|2\pi|\xi_v|-T_v|^{-1/4}), & |\xi_v|\leq T_v, \\ e^{-\pi|\xi_v|}, & |\xi_v|> T_v, \end{cases}
\end{equation}
as follows from \cite[p.~679]{BHo} and \cite[Prop.~9]{HM}, upon noting that $|\im(t_v)|\leq 1/2$ since $\lambda_v\geq 0$ (cf.\ \eqref{tparameter}).
We fix an $\eps>0$ for the rest of this section. Then, by \cite[Prop.~2.2, (2.16), (3.5), Prop.~3.2]{Ma} and the fact that $\phi$ is a newform, we also have
\begin{equation}\label{negligible_rho}
|T|_\infty^{-\eps}(\N\mn)^{-\eps} \ll_{\eps} |\rho(\mo)|\ll_{\eps} |T|_\infty^{\eps}(\N\mn)^{\eps}.
\end{equation}

Multiplying $y\in F_\infty^\times$ and $x\in F_\infty$ by a given unit from $\mo^\times$ leaves the bound \eqref{Fbound} unchanged, hence we may assume that
\begin{equation}\label{yassumption}
|y_v|_v\asymp|T_v|_v^{\log|y|_\infty/\log|T|_\infty},\qquad v\mid\infty.
\end{equation}
To see this, we apply Lemma~\ref{balancing} with $m=1$ and observe that the product of the right hand side over all $v\mid\infty$ equals $|y|_\infty$. We may further assume that $y_v>0$ for all $v\mid\infty$, thanks to \eqref{rotate}.

We estimate the sum on the right hand side of \eqref{fourier} by the Cauchy--Schwarz inequality as
\begin{equation}\label{cs}
\leq\left(\sum_{0\neq n\in \theta_i^{-1}\md^{-1}} \frac{|\lambda(n\theta_i \md)|^2}{|n|_\infty \prod_{v\mid \infty}(1+|n_vy_v|_v)^{2\eps}}\right)^{1/2}
\left(\sum_{0\neq n\in \theta_i^{-1}\md^{-1}} |W(ny)|^2\prod_{v\mid \infty}(1+|n_vy_v|_v)^{2\eps} \right)^{1/2},
\end{equation}
and we claim that the first factor satisfies
\begin{equation}\label{cs_hecke}
\sum_{0\neq n\in \theta_i^{-1}\md^{-1}} \frac{|\lambda(n\theta_i \md)|^2}{|n|_\infty \prod_{v\mid \infty}(1+|n_vy_v|_v)^{2\eps}} \ll_{\eps} |T/y|_\infty^{\eps}(\N\mn)^{\eps}.
\end{equation}
To see this, we fix a nonzero principal fractional ideal $\mm\subseteq \theta_i^{-1}\md^{-1}$ along with a nonnegative integral vector
$\bl=(l_v)\in \NN^{\{v\mid \infty\}}$, and we estimate first the contribution of the $n$'s lying in
\[\{n\in\theta_i^{-1}\md^{-1}: \text{$2^{l_v}\leq 1+|n_vy_v|_v<2^{l_v+1}$ for all $v\mid \infty$ and $n\mo=\mm$}\}.\]
By part (b) of Corollary~\ref{corollary1}, this contribution is
\[\ll_{\eps} \left(\frac{|\lambda(\mm\theta_i\md)|^2}{\N\mm}\prod_{v\mid\infty}2^{-2\eps l_v}\right)\times \left((\N\mm)^{-\eps} |y|_{\infty}^{-\eps}\prod_{v\mid\infty}2^{\eps l_v}\right).\]
Summing over all pairs $(\mm,\bl)$, we infer that the left hand side of \eqref{cs_hecke} is indeed
\[\ll_{\eps} |y|_{\infty}^{-\eps}\sum_{0\neq \mm\subseteq \theta_i^{-1}\md^{-1}} \frac{|\lambda(\mm\theta_i\md)|^2}{(\N\mm)^{1+\eps}}\prod_{v\mid \infty} \left(\sum_{l_v=1}^{\infty} 2^{-\eps l_v}\right)
\ll_{\eps} |T/y|_\infty^{\eps}(\N\mn)^{\eps},\]
since the $\mm$-sum is $\ll_{\eps} |T|_\infty^{\eps}(\N\mn)^{\eps}$ by Iwaniec's trick \cite[pp.~72--73]{Iw2}.

To estimate the second factor in \eqref{cs}, we decompose $F_\infty^\times$ into generalized boxes
\[F_\infty^\times=\bigcup_\bk I(\bk),\qquad I(\bk):=\prod_{v\mid\infty} I_v(k_v),\]
where $\bk=(k_v)\in \ZZ^{\{v\mid\infty\}}$ runs through integral vectors, and the local components are defined as
\begin{align*}
I_v(k_v)&:=\left\{\xi_v\in F_v^\times: k_v\frac{T_v}{y_v} < |\xi_v| \leq (k_v+1)\frac{T_v}{y_v} \right\},&&k_v\geq 1;\\
I_v(k_v)&:=\left\{\xi_v\in F_v^\times: |\xi_v| \leq \frac{T_v}{y_v}\text{ and } -k_v \leq \left||\xi_v|-\frac{T_v}{2\pi y_v}\right| < -k_v+1 \right\},&&k_v\leq 0.
\end{align*}
It is easy to see that $I(\bk)=\emptyset$ unless $k_v\geq -\lfloor T_v/y_v\rfloor$ for each $v\mid\infty$.
Correspondingly, we have
\begin{equation}\label{cs_whittaker}
\sum_{0\neq n\in\theta_i^{-1}\md^{-1}} |W(ny)|^2\prod_{v\mid \infty}(1+|n_vy_v|_v)^{2\eps} =
\sum_{\substack{\bk=(k_v)\\k_v\geq-\lfloor T_v/y_v\rfloor}} \sum_{n\in\theta_i^{-1}\md^{-1}\cap I(\bk)} |W(ny)|^2\prod_{v\mid \infty}(1+|n_vy_v|_v)^{2\eps}.
\end{equation}
Then the inner sum (the contribution of a given $\bk$) is
\begin{displaymath}
\leq\# (\theta_i^{-1}\md^{-1}\cap I(\bk)) \times \sup_{n\in\theta_i^{-1}\md^{-1}\cap I(\bk)} |W(ny)|^2\prod_{v\mid \infty}(1+|n_vy_v|_v)^{2\eps}.
\end{displaymath}

We shall estimate both factors on the right hand side by a product of local factors over $v\mid\infty$. For estimating the lattice point count, we consider a fundamental parallelotope $P_i$ for the fractional ideal $\theta_i^{-1}\md^{-1}$. We can and we shall assume that $P_i$ contains the origin and has diameter $D_i\ll 1$. Then we observe that
\begin{displaymath}
\bigcup_{n\in\theta_i^{-1}\md^{-1}\cap I(\bk)}(n+P_i) \subseteq J(\bk),\qquad J(\bk):=\prod_{v\mid \infty} J_v(k_v),
\end{displaymath}
where the union on the left hand side is disjoint, and the local components are defined as
\begin{align*}
J_v(k_v)&:=\left\{\xi_v\in F_v: k_v\frac{T_v}{y_v}-D_i < |\xi_v| \leq (k_v+1)\frac{T_v}{y_v}+D_i \right\},&&k_v\geq 1;\\
J_v(k_v)&:=\left\{\xi_v\in F_v: -k_v-D_i \leq \left||\xi_v|-\frac{T_v}{2\pi y_v}\right| < -k_v+1+D_i \right\},&&k_v\leq 0.
\end{align*}
Using $\vol(P_i)\gg 1$ and $D_i\ll 1$, it is now clear that
\begin{displaymath}
\# (\theta_i^{-1}\md^{-1}\cap I(\bk)) \ll \vol(J(\bk))=\prod_{v\mid\infty}\vol(J_v(k_v))\ll\prod_{v\mid \infty} f_v(k_v),
\end{displaymath}
where
\begin{align*}
f_v(k_v)&:=1+T_v/y_v,&&\text{$v$ real and $k_v\geq 1$};\\
f_v(k_v)&:=1,&&\text{$v$ real and $k_v\leq 0$};\\
f_v(k_v)&:=k_v(1+T_v/y_v)^2,&&\text{$v$ complex and $k_v\geq 1$};\\
f_v(k_v)&:=1+T_v/y_v,&&\text{$v$ complex and $k_v\leq 0$}.
\end{align*}
By \eqref{local_bounds}, we also have
\begin{displaymath}
\sup_{n\in\theta_i^{-1}\md^{-1}\cap I(\bk)} |W(ny)|^2\prod_{v\mid \infty}(1+|n_vy_v|_v)^{2\eps} \ll \prod_{v\mid\infty} g_v(k_v),
\end{displaymath}
where
\begin{align*}
g_v(k_v)&:= |k_vT_v|_v^{2\eps}e^{-2\pi k_vT_v},&&k_v\geq 1;\\
g_v(k_v)&:= |T_v|_v^{2\eps} \min(T_v^{1/3},T_v^{1/2}|k_vy_v|^{-1/2}),&&k_v\leq 0.
\end{align*}

By distributivity, we infer that the right hand side of \eqref{cs_whittaker} is
\begin{equation}\label{cs_whittaker2}
\ll \sum_{\substack{\bk=(k_v)\\k_v\geq-\lfloor T_v/y_v\rfloor}}\left(\prod_{v\mid\infty}f_v(k_v)g_v(k_v)\right) = \prod_{v\mid\infty} \left(\sum_{k_v=-\lfloor T_v/y_v\rfloor}^{\infty} f_v(k_v)g_v(k_v)\right).
\end{equation}
The local factor at a real place $v$ is
\[\ll \sum_{k_v=1}^{\infty} (k_vT_v)^{2\eps} (1+T_v/y_v) e^{-2\pi k_vT_v} + \sum_{k_v=-\lfloor T_v/y_v\rfloor}^{0} T_v^{2\eps} \min(T_v^{1/3},T_v^{1/2}|k_vy_v|^{-1/2})
\ll_\eps T_v^{2\eps}(T_v^{1/3}+T_v/y_v),\]
as follows by estimating the minimum in the second sum by $T_v^{1/3}$ for $k_v=0$ and by $T_v^{1/2}|k_vy_v|^{-1/2}$ for $k_v<0$.
Similarly, the local factor at a complex place $v$ is
\begin{displaymath}
\begin{split}
& \ll \sum_{k_v=1}^{\infty} (k_vT_v)^{4\eps} k_v(1+T_v/y_v)^2 e^{-2\pi k_vT_v} + \sum_{k_v=-\lfloor T_v/y_v\rfloor}^{0} T_v^{4\eps} (1+T_v/y_v) \min(T_v^{1/3},T_v^{1/2}|k_vy_v|^{-1/2}) \\
& \ll_\eps T_v^{4\eps}(1+T_v/y_v)(T_v^{1/3}+T_v/y_v) \ll T_v^{4\eps}(T_v^{1/3}+T_v/y_v)^2\ll T_v^{4\eps}(T_v^{2/3}+T_v^2/y_v^2).
\end{split}
\end{displaymath}
All in all, the local factor at each archimedean place $v$ is
\begin{displaymath}
\ll_\eps |T_v|_v^{2\eps} (|T_v|_v^{1/3}+|T_v/y_v|_v).
\end{displaymath}
We conclude, by \eqref{cs_whittaker} and \eqref{cs_whittaker2},
\begin{equation}\label{cs_whittaker3}
\sum_{0\neq n\in\theta_i^{-1}\md^{-1}} |W(ny)|^2\prod_{v\mid \infty}(1+|n_vy_v|_v)^{2\eps}
\ll_\eps \prod_{v\mid \infty } |T_v|_v^{2\eps} (|T_v|_v^{1/3}+|T_v/y_v|_v) \ll |T|_\infty^{2\eps}(|T|_{\infty}^{1/3}+|T/y|_{\infty}),
\end{equation}
where we used the balancing assumption \eqref{yassumption} in the last step as follows:
\begin{align*}
|y|_\infty\leq|T|_\infty^{2/3}\qquad &\Longrightarrow\qquad\text{$|T_v|_v^{1/3}\ll |T_v/y_v|_v$ for all $v\mid\infty$},\\
|y|_\infty\geq|T|_\infty^{2/3}\qquad &\Longrightarrow\qquad\text{$|T_v|_v^{1/3}\gg |T_v/y_v|_v$ for all $v\mid\infty$}.
\end{align*}
Substituting \eqref{cs_whittaker3} into \eqref{cs}, and also using \eqref{cs_hecke} and \eqref{negligible_rho}, we obtain finally
\begin{align*}
\phi\left(\yx\begin{pmatrix}\theta_i \\ & 1\end{pmatrix}\right)
&\ll_{\eps} \left(|T|_{\infty}^{1/6}+|T/y|_{\infty}^{1/2}\right) |T|_{\infty}^{2\eps} \,\, |T/y|_\infty^{\eps/2} \, (\N\mn)^{2\eps}\\
&\ll_{\eps} \left(|T|_{\infty}^{1/6}+|T/y|_{\infty}^{1/2}\right)^{1+12\eps+\eps} (\N\mn)^{2\eps}.
\end{align*}
Replacing $\eps$ by $\eps/13$ completes the proof of \eqref{Fbound}.
\end{proof}

\begin{remark}\label{remark2} Lemma~\ref{lemma7} is close to optimal when $|y|_\infty$ is around $|T|_\infty$, but it does not capture the exponential decay of $\phi$ in the cusps. This result, however, suffices for our purposes, and in combination with Lemma~\ref{lemma4}, it yields a bound for $|\phi(g)|$ at any $g\in\GL_2(\Aa)$.
\end{remark}

\section{The amplifier}\label{amplifier}

In Section~\ref{section2}, we have seen that any function $f\in\Hs^\flat$ such that the operator $R(f)$ is positive
can be used to bound $|\phi(g)|$ at a given point $g\in\GL_2(\Aa)$ (cf.\ \eqref{ampbound1}), complementing the Fourier bound of the previous section (cf.\ Remark~\ref{remark2}). In this section, we construct this \emph{amplifier} as a pure tensor $f=f_\infty\otimes f\fin$, where
$f_\infty\in\Hs^\flat_\infty$ and $f\fin\in\Hs^\flat\fin$. Then, $R(f)$ is the product of the commuting operators $R(f_\infty)$ and $R(f\fin)$, hence it is positive as long as both $R(f_\infty)$ and $R(f\fin)$ are (cf.\ \cite[Section~104]{RSz}).

\subsection{The archimedean part of the amplifier} In Section~\ref{section4}, we introduced the generalized upper half-space $\Hb$ (cf.\ \eqref{hb}) as a subset of the Euclidean space $\MM$ (cf.\ \eqref{mm}), and we identify it with $G_\infty\slash K_\infty$ via \eqref{eq1}, \eqref{eq2}, \eqref{Kdef}, \eqref{groups2}. In particular, the left action of $G_\infty$ on $\Hb$ is given by generalized fractional linear transformations. For a point $P=(P_v)_{v\mid\infty}\in\Hb$ as in \eqref{associateP}, we write $\Im(P_v)$ for $y_v$, and we define
\begin{equation}\label{udef}
u_v(P_v,Q_v):=\frac{\|P_v-Q_v\|^2}{2\Im(P_v) \Im(Q_v)},\qquad P,Q \in \Hb,
\end{equation}
where $\|\cdot\|$ stands for the Euclidean norm (length) in the corresponding $\CC$ or $\HH$ component of $\MM$. Then $u=(u_v)_{v\mid\infty}$ is a point-pair invariant on $\Hb\times\Hb$, i.e.\ it is invariant under the diagonal left action of~$G_\infty$.

We define $f_\infty(g)$ in terms of the nonnegative vector $u(g\ib,\ib)$, where $\ib:=P(0,1)\in\Hb$ corresponds to the identity element of $G_\infty$ (cf.\ \eqref{associateP}). Specifically, in the next subsection, we shall choose\footnote{The notations $f_v$, $g_v$, $k_v$ are independent of those in the previous section. Hopefully, this does not cause confusion.} a smooth, compactly supported function $k_v:[0,\infty)\to\RR$ for each $v\mid\infty$, and we put for $g\in G_\infty$
\begin{equation}\label{finf}
f_\infty(g):=\prod_{v\mid\infty}f_v(g_v),\qquad f_v(g_v):=k_v\bigl(u_v(g_v\ib_v,\ib_v)\bigr).
\end{equation}
That is, $R(f_\infty)=R(\otimes_{v\mid\infty}f_v)=\prod_{v\mid\infty} R(f_v)$ is the product of the commuting operators $R(f_v)$, hence it is positive as long as the factors $R(f_v)$ are (cf.\ \cite[Section~104]{RSz}).

We determine $k_v(u)$ in terms of its Selberg/Harish-Chandra transform $h_v(t)$ on $\Hh^2$ or $\Hh^3$ (depending on the type of $v$), which is necessarily holomorphic, even, and rapidly decaying in every strip $|\im(t)|<A$, and it is real-valued on $\RR\cup\RR i$. We shall focus on the strip $|\im(t)|<1$, because for the positivity of $R(f_v)$ it suffices that $h_v(t)$ has positive real part there. Indeed, in this case $h_v(t)$ is the square of an even, holomorphic, rapidly decaying function $\tilde h_v(t)$ in the strip $|\im(t)|<1$, which is real-valued on $\RR\cup(-1,1)i$, therefore $R(f_v)$ is the square of the corresponding self-adjoint operator $R(\tilde f_v)$.

\subsection{Selberg/Harish-Chandra pairs} We fix once and for all a holomorphic, rapidly decaying function
\begin{equation}\label{jdef}
j:\{t\in\CC:|\im(t)|<1\}\to\{z\in\CC:\re(z)>0\}
\end{equation}
satisfying the symmetries
\begin{equation}\label{symmetry}
\ov{j(\ov t)}=j(t)=j(-t).
\end{equation}
We assume, further, that its \emph{inverse Fourier transform}
\[\hat j(x)=\frac{1}{2\pi}\int_{-\infty}^\infty j(t)e^{itx}dt,\qquad x\in\RR,\]
is smooth and supported in $[-1,1]$. Such a function $j(t)$ certainly exists, and we provide an example based on the original construction of Iwaniec--Sarnak~\cite{IS}. We take a smooth, even, not identically zero function $m:\RR\to\RR$ supported in $[-1/2,1/2]$ with \emph{Fourier transform}
\[\check m(t):=\int_{-\infty}^\infty m(x)e^{-itx}dx,\qquad t\in\CC,\]
and we define $j(t)$ as the convolution
\[j(t):=\int_{-\infty}^\infty \sech\left(\frac{\pi(t-s)}{2}\right)\check m(s)^2\,ds,\qquad |\im(t)|<1.\]
Then $\hat j(x)$ is the product of $2\sech(x)$ and the self-convolution $(m\ast m)(x)$, and the required properties of $j(t)$ and $\hat j(x)$ are straightforward to verify.

After this preparation, for each archimedean place $v\mid\infty$ we consider the even holomorphic function
\begin{equation}\label{hdef}
h_v(t):=j(t-T_v)+j(t+T_v),
\end{equation}
where $T_v$ is defined by \eqref{Tparameter}. Then, in accordance with \cite[(1.64)]{Iw} and Lemma~5.5 in \cite[Ch.~3]{EGM}, we obtain the inverse Selberg/Harish-Chandra transform $k_v(u)$ of $h_v(t)$ in three steps:
\begin{align}
g_v(x)&:=\hat h_v(x)=2\cos(T_v x)\hat j(x),& q_v(w)&:=\frac{1}{2}\,g_v\bigl(|2|_v\log\bigl(\sqrt{w+1}+\sqrt{w}\bigr)\bigr),&\label{gdef}\\
k_v(u)&:=\int_{u/2}^\infty\frac{-q_v'(w)\,dw}{\pi\sqrt{w-u/2}}\quad\text{for $v$ real,}&
k_v(u)&:=\frac{-q_v'(u/2)}{\pi}\quad\text{for $v$ complex.}&\label{kdef}
\end{align}
In checking these formulae, it is good to keep in mind the following. For $v$ real, $u$ in \cite[(1.64)]{Iw} is $u/2$ here. For $v$ complex, $h(1+t^2)$ and $g(x)$ in \cite[Ch.~3, (5.32)]{EGM} are $h_v(t/2)$ and $2g_v(2x)$ here, while $k(x)$ and $Q(\cosh(x))$ in \cite[Ch.~3, (5.35)]{EGM} are $k_v(x-1)$ and $4q_v(w)$ here, upon writing $\cosh(x)$ as $1+2w$, i.e. $x=2\log\bigl(\sqrt{w+1}+\sqrt{w}\bigr)$. The next lemma summarizes the properties that we need of these functions.

\begin{lemma}\label{pairlemma} The function $h_v(t)$ is holomorphic, even, and rapidly decaying in the strip $|\im(t)|<1$. It is positive on $\RR\cup(-1,1)i$, and it has positive real part in the strip $|\im(t)|<1$. Moreover, at the spectral parameter $t_v$ of $\phi$ (cf.\ \eqref{tparameter}) it satisfies the bound
\begin{equation}\label{boundh}
h_v(t_v)\gg 1.
\end{equation}
The inverse Selberg/Harish-Chandra transform $k_v(u)$ of $h_v(t)$ is smooth, real-valued, and supported in $[0,1]$. Moreover, it satisfies the bound
\begin{equation}\label{boundk}
k_v(u) \ll \min\left(|T_v|_v,|T_v|_v^{1/2}|u|_v^{-1/4}\right),\qquad u\geq 0.
\end{equation}
\end{lemma}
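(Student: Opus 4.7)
The plan is to dispatch the structural claims for $h_v$ first, then the ones for $k_v$, and finally address the bound \eqref{boundk}, which is where essentially all of the work lies.

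For $h_v(t)=j(t-T_v)+j(t+T_v)$: holomorphy, evenness, and rapid decay in every substrip of $|\im t|<1$ are inherited directly from $j$ via the symmetry $j(-t)=j(t)$. The identity $\overline{j(\bar t)}=j(t)$ forces $j(\RR)\subset\RR$, so $j|_\RR>0$ by the right-half-plane condition, giving $h_v|_\RR>0$. For $t=is\in i(-1,1)$, evenness yields $j(is+T_v)=j(-is-T_v)$, whose conjugate by the reflection identity equals $j(is-T_v)$; hence $h_v(is)=2\re j(is+T_v)>0$. More generally, both summands lie in the open right half-plane throughout $|\im t|<1$, so $\re h_v>0$ on the whole strip. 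For the lower bound \eqref{boundh}, I split cases: if $|t_v|\geq 1/2$ then $T_v=|t_v|$ and one of $t_v\pm T_v$ equals $0$, so $\re h_v(t_v)\geq\re j(0)>0$; if $|t_v|<1/2$ then $T_v=1/2$ and $t_v\pm T_v$ ranges in a fixed compact subset of the open strip (since the spectral parameter satisfies $|\im t_v|\leq 1/2$ by $\lambda_v\geq 0$), so continuity and strict positivity of $\re j$ on this compactum yield a uniform positive lower bound.

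For $k_v$: real-valuedness and smoothness follow from \eqref{gdef}--\eqref{kdef} once one notes that $\hat j$ is real and even (from the symmetries of $j$), so $g_v$ is real, even, and smooth, and $g_v'(0)=0$. Since $\hat j$ (hence $g_v$) is supported in $[-1,1]$, the function $q_v$ is supported in $w\leq\sinh^2(1/|2|_v)$, and the inversion formulas \eqref{kdef} then force $k_v$ to be supported in $[0,2\sinh^2(1/|2|_v)]\subset[0,1]$ for both real and complex $v$.

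The bound \eqref{boundk} is the analytically substantive step. The $|T_v|_v$ side is obtained by reading the inverse spherical transform as a Plancherel integral: $k_v(0)$ equals a constant times $\int h_v(t)\mu_v(t)\,dt$ with Plancherel density $\mu_v(t)=t\tanh(\pi t)$ for real $v$ and $\mu_v(t)=t^2$ for complex $v$; since $h_v$ is concentrated in intervals of length $O(1)$ around $\pm T_v$ with bounded mass, this gives $k_v(0)\ll|T_v|_v$, and the same upper bound for $|k_v(u)|$ with $u>0$ follows by positivity of the inverse transform of $|h_v|$. For the decay factor $|u|_v^{-1/4}$, I would work from the explicit formulas. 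For $v$ complex, $k_v(u)=-q_v'(u/2)/\pi$ and a direct differentiation of $q_v(w)=\tfrac{1}{2}g_v(2\operatorname{arcsinh}\sqrt w)$ gives $q_v'(w)=g_v'(x)/(2\sqrt{w(1+w)})$ with $x=2\operatorname{arcsinh}\sqrt w$; since $|g_v'(x)|\ll T_v$ uniformly on $[-1,1]$, one gets $|k_v(u)|\ll T_v/\sqrt u=|T_v|_v^{1/2}|u|_v^{-1/4}$, as needed. For $v$ real, the change of variables $x=2\operatorname{arcsinh}\sqrt w$ (which kills the Abel-type factor via $dw/\sqrt{w(1+w)}=dx$) rewrites
\[
k_v(u)=-\frac{1}{2\pi}\int_{x_0}^{1}\frac{g_v'(x)}{\sqrt{\sinh^2(x/2)-u/2}}\,dx,\qquad x_0=2\operatorname{arcsinh}\sqrt{u/2},
\]
and the oscillatory factor $g_v'(x)=-2T_v\sin(T_vx)\hat j(x)+2\cos(T_vx)\hat j'(x)$ carries frequency $T_v$. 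One integration by parts against $e^{\pm iT_vx}$ away from the endpoint $x_0$, combined with the square-root singularity at $x_0$ (treated by a Taylor expansion $\sinh^2(x/2)-u/2\asymp (x-x_0)\sqrt{u(1+u/2)}$), produces the desired bound $|k_v(u)|\ll T_v^{1/2}u^{-1/4}$. The main obstacle is precisely this last step: the balance between the oscillation at scale $T_v$ and the Abel-type singularity at $x_0$, which is the source of the characteristic $1/4$-power gain and requires splitting the integral into a neighborhood of $x_0$ of width $\asymp 1/T_v$ (estimated trivially) and its complement (handled by integration by parts).
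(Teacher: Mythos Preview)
Your treatment of the structural claims for $h_v$ (holomorphy, evenness, positivity, \eqref{boundh}) and for $k_v$ (smoothness, real-valuedness, support) matches the paper's proof essentially line-for-line. The divergence is entirely in the proof of \eqref{boundk}, and your approach there is correct but noticeably more elaborate than the paper's. The paper does not split the two branches of the minimum: it simply records the pointwise bound $q_v'(w)\ll\min(T_v^2,T_v w^{-1/2})$ (from $|g_v'(x)|\ll T_v$ and $|g_v'(x)|\ll T_v^2|x|$ since $g_v$ is even) together with $q_v\ll 1$, then for real $v$ writes $k_v(u)=\pi^{-1}\int_0^\infty(-q_v'(t+u/2))t^{-1/2}\,dt$, splits at a free parameter $\eta$, bounds the near part by $\eta^{1/2}\min(T_v^2,T_v u^{-1/2})$ trivially and the far part by $\eta^{-1/2}$ via a single integration by parts against $q_v$, and optimizes $\eta$. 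This delivers both branches of the minimum at once. By contrast, you obtain the $|T_v|_v$ branch from $|k_v(u)|\leq k_v(0)$ via the spherical Plancherel formula and $|\varphi_t|\leq 1$ (which is what your ``positivity'' remark amounts to), and the $|T_v|_v^{1/2}|u|_v^{-1/4}$ branch by passing to the $x$-variable and running an oscillatory-integral argument (split near $x_0$ at scale $1/T_v$, integrate by parts in the complement). Your route works and is conceptually transparent about where the $1/4$ comes from, but the paper's argument is shorter and avoids both the spherical-function input and the endpoint analysis of the oscillatory integral. For the complex place the two arguments coincide (direct estimation of $q_v'(u/2)$); note only that $|2|_v=4$ there, so $q_v(w)=\tfrac12 g_v(4\operatorname{arcsinh}\sqrt{w})$ rather than $2\operatorname{arcsinh}\sqrt{w}$, a harmless discrepancy in your write-up.
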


\begin{proof} It is clear by our construction that $h_v(t)$ is even, holomorphic, and rapidly decaying in the strip $|\im(t)|<1$. It is also clear by \eqref{jdef} and \eqref{hdef} that $h_v(t)$ has positive real part in the strip $|\im(t)|<1$. From \eqref{symmetry} it follows that $h_v(t)$ is real on $\RR\cup(-1,1)i$, hence in fact it is positive there. In particular, $h_v(t_v)$ is positive. Now we prove \eqref{boundh}. If $|t_v|\leq 1/2$, then $T_v=1/2$ by \eqref{Tparameter}, hence $h_v(t_v)$ lies in a fixed compact subset of $\CC$, and \eqref{boundh} follows. If $|t_v|>1/2$, then $t_v\in\RR$ and $T_v=|t_v|$ by \eqref{Tparameter}, hence $h_v(t_v)>j(0)$, and \eqref{boundh} follows again.

It is clear by our construction that $k_v(u)$ is smooth and real-valued. It is also clear by \eqref{gdef} that $g_v(x)$ is supported in $[-1,1]$, hence $k_v(u)$ vanishes when $u\geq 2\sinh^2(1/2)$. This shows that $k_v(u)$ is supported in $[0,1]$. Finally, we prove \eqref{boundk}. For a real place $v$, we follow closely the proof of \cite[Lemma~1.1]{IS}, so we shall be brief. By \eqref{gdef} we have
\[q_v(w)\ll 1\qquad\text{and}\qquad q_v'(w)\ll\min\bigl(T_v^2,T_vw^{-1/2}\bigr),\]
hence by \eqref{kdef} we have, for any $u>0$ and $\eta>0$,
\[k_v(u)=\int_0^\infty \frac{-q_v'(t+u/2)}{\pi\sqrt{t}}\,dt=\int_0^\eta+\int_\eta^\infty
\ll \eta^{1/2}\min\bigl(T_v^2,T_vu^{-1/2}\bigr)+\eta^{-1/2}.\]
Choosing $\eta:=\min\bigl(T_v^2,T_vu^{-1/2}\bigr)^{-1}$, the bound \eqref{boundk} follows. For a complex place $v$, the bound \eqref{boundk} is immediate from \eqref{gdef} and \eqref{kdef}, since in this case
\[q_v'(u/2)\ll\min\bigl(T_v^2,T_vu^{-1/2}\bigr)=\min\left(|T_v|_v,|T_v|_v^{1/2}|u|_v^{-1/4}\right).\]
The proof is complete.
\end{proof}

\subsection{The non-archimedean part of the amplifier}\label{napart} Let $C_0\geq 2$ be a sufficiently large constant depending only on the number field $F$, and let
\begin{equation}\label{sizeL}
C_0 \leq L\leq|T|_\infty(\N\mn)
\end{equation}
be a parameter to be optimized later\footnote{See also the first sentence in Section~\ref{endgame}.}. We consider all the \emph{totally split principal prime ideals} in $\mo$ coprime to $\mn\mq$ which are generated by an element from $F^\times_+\cap(1+\mq)$ and whose norm lies in $[L,2L]$, and for each of them we choose a totally positive generator
\begin{equation}\label{cong1}
l\in F^\times_+\cap(1+\mq).
\end{equation}
For each rational prime that occurs as a norm here, we keep exactly one of the $l$'s above it; furthermore, by restricting $l$ to an appropriate fundamental domain for $F^\times_+\slash\mo^\times_+$, we can and we shall assume that $l_v\asymp L^{1/n}$ holds for each archimedean place $v$ (cf.\ Remark~\ref{fundamentaldomainremark}). We denote the set of these prime elements $l\in\mo$ by $P(L)$. If the constant $C_0\geq 2$ in \eqref{sizeL} is sufficiently large, then $P(L)$ is non-empty and its cardinality satisfies
\begin{equation}\label{PL}
\#P(L) \asymp L/\log L
\end{equation}
by the extension of Dirichlet's theorem\footnote{For the purposes of this paper, the following weaker version of \eqref{cong1} would suffice:
for each real $v\mid\infty$ the sign of $l_v\in F_v^\times$ is constant, and for each $\mpr\mid\mq$ the square class of $l_\mpr\in\mo_\mpr^\times$ is constant. Hence, by the pigeonhole principle, we could do with Dirichlet's theorem for the class group only, and we could even avoid this variant by allowing a more general amplifier.} to narrow ray class groups \cite[\S 13 in Ch.~VII]{Ne}, since $\mq$ is a fixed ideal depending only on $F$.

Inspired by \cite[(4.11)]{Ve}, and using the notation of \eqref{tmdefflat} and \eqref{lambdadef}, we set
\begin{equation}\label{signcoeff}
x_m:=\sgn\bigl(\lambda(m\mo)\bigr)\qquad\text{for $m\in\mo$ coprime to $\mn\mq$},
\end{equation}
\begin{equation}\label{ffin}
f\fin:=\biggl(\sum_{l\in P(L)}x_l t^\flat_{l\mo}\biggr)\ast\biggl(\sum_{l\in P(L)}x_l t^\flat_{l\mo}\biggr)+
\biggl(\sum_{l\in P(L)}x_{l^2} t^\flat_{l^2\mo}\biggr)\ast\biggl(\sum_{l\in P(L)}x_{l^2} t^\flat_{l^2\mo}\biggr).
\end{equation}
Clearly, $R(f\fin)$ is positive, because it is a sum of squares of self-adjoint operators (cf.\ Section~\ref{section2}):
\begin{equation}\label{linearized}
R(f\fin)=\biggl(\sum_{l\in P(L)}x_l T^\flat_{l\mo}\biggr)^2+\biggl(\sum_{l\in P(L)}x_{l^2} T^\flat_{l^2\mo}\biggr)^2.
\end{equation}
In addition, by the multiplicativity relation \eqref{Hecke}, we can linearize the quadratic expression \eqref{ffin} as
\begin{equation}\label{ffin2}
f\fin=\sum_{0\neq m\in\mo} w_m t^\flat_{m\mo},\qquad
w_m:=\begin{cases}
\sum_{l\in P(L)}\left(x_l^2+x_{l^2}^2\right),&m=1;\\
x_{l_1}x_{l_2}+\delta_{l_1=l_2}x_{l_1^2}x_{l_2^2},&\text{$m=l_1l_2$ for some $l_1,l_2\in P(L)$};\\
x_{l_1^2}x_{l_2^2},&\text{$m=l_1^2l_2^2$ for some $l_1,l_2\in P(L)$};\\
0,&\text{otherwise}.
\end{cases}
\end{equation}
It follows from \eqref{signcoeff}, \eqref{linearized}, and the discussion of Section~\ref{section2}, that $R(f\fin)\phi=c(f\fin)\phi$ holds with
\begin{equation}\label{cfinbound}
c(f\fin)=\biggl(\sum_{l\in P(L)}|\lambda(l\mo)|\biggr)^2+\biggl(\sum_{l\in P(L)}|\lambda(l^2\mo)|\biggr)^2
\geq\frac{1}{2}\biggl(\sum_{l\in P(L)}|\lambda(l\mo)|+|\lambda(l^2\mo)|\biggr)^2>\frac{1}{8}\bigl(\#P(L)\bigr)^2.
\end{equation}
In the last step we used the bound $|\lambda(l\mo)|+|\lambda(l^2\mo)|>1/2$ which follows from \eqref{Hecke}.

\subsection{Reduction to a counting problem} Combining \eqref{tparameter}, \eqref{Tparameter}, \eqref{boundh}, \eqref{cfinbound}, we see that
\[c(f)=c(f\fin)c(f_\infty)=c(f\fin)\prod_{v\mid\infty}h_v(t_v)\gg c(f\fin)\gg \bigl(\#P(L)\bigr)^2,\]
hence \eqref{ampbound1}, \eqref{sizeL}, \eqref{PL} imply for the amplifier $f\in\Hs^\flat$ constructed above (cf.\ \eqref{notationformula}, \eqref{Tparameter}) that
\[L^2|\phi(g)|^2\preccurlyeq\sum_{\gamma\in\Gamma}f(g^{-1}\gamma g).\]
We recall that $\gamma$ runs through the elements of $\GL_2(F)$ modulo $\Z(\mo)$ (cf.\ \eqref{groups}). Now we fix a special matrix
\begin{equation}\label{gspecial}
g:=\yx\begin{pmatrix}\theta_i&\\&1\end{pmatrix}\in\FF(\mn)
\end{equation}
and the corresponding point $P=P(x,y)\in\Hb$ as in Lemma~\ref{lemma5}. Then, by \eqref{finf} and \eqref{ffin2}, the previous inequality becomes
\[L^2|\phi(g)|^2\preccurlyeq \sum_{0\neq m\in\mo} |w_m|\sum_{\gamma\in\Gamma}
t^\flat_{m\mo}\left(\begin{pmatrix}\theta_i^{-1}&\\&1\end{pmatrix}\gamma\fin\begin{pmatrix}\theta_i&\\&1\end{pmatrix}\right)
\left|k\bigl(u(\gamma P,P)\bigr)\right|,\]
where
\begin{equation}\label{kudef}
k\bigl(u(\gamma P,P)\bigr):=\prod_{v\mid\infty}k_v\bigl(u_v(\gamma_v P_v,P_v)\bigr).
\end{equation}

Assume that $\gamma\in\Gamma$ contributes to the inner sum for a given nonzero integer $m\in\mo$. By \eqref{ffin2} and \eqref{cong1}, $m$ is totally positive and congruent to $1$ modulo $\mq$, and by \eqref{Mdef1}, \eqref{Mdef3}, \eqref{tmdefflat}, $\gamma$ is represented by a matrix $\sabcd\in\GL_2(F)$
satisfying
\[a,d\in\mo,\qquad a-d\in\mq,\qquad b\in \theta_i\mo,\qquad c\in \theta_i^{-1}(\mn\cap\mq),\qquad\text{$ad-bc = mu$ for some $u\in\mo^\times$}.\]
Here we have that $bc\in\mq$, hence $ad-bc\in a^2+\mq$, and so the unit $u\in\mo^\times$ is a quadratic residue modulo $\mq$. By the choice of $\mq$ (cf.\ Section~\ref{section1}), we see that $u=v^2$ for some $v\in\mo^\times$, and our relations become
\[a,d\in\mo,\qquad a-d\in\mq,\qquad b\in \theta_i\mo,\qquad c\in \theta_i^{-1}(\mn\cap\mq),\qquad \text{$ad-bc = mv^2$ for some $v\in\mo^\times$}.\]
Multiplying $\sabcd\in\GL_2(F)$ by the inverse of $\bigl(\begin{smallmatrix}v&\\&v\end{smallmatrix}\bigr)\in\Z(\mo)$ does not change the class $\gamma\in\Gamma$ represented, nor does it change any of the congruence conditions on the entries $a,b,c,d$, so we can and we shall assume that $v=1$. Looking at the range of $t^\flat_{m\mo}$ (cf.\ \eqref{tmdefflat}), we arrive at
\begin{equation}\label{ampbound2}
L^2|\phi(g)|^2\preccurlyeq \vol(K^\flat\fin)^{-1}\sum_{0\neq m\in\mo} \frac{|w_m|}{\sqrt{\N(m\mo)}}
\sum_{\gamma\in\Gamma(i,m)}\left|k\bigl(u(\gamma P,P)\bigr)\right|,
\end{equation}
where
\begin{equation}\label{Rm}
\begin{split}
\Gamma(i,m):=\left\{\abcd\in\GL_2(F): \ a,d\in\mo,\ a-d\in\mq,\ b\in \theta_i\mo,\ c\in \theta_i^{-1}(\mn\cap\mq),\ ad-bc = m \right\}.
\end{split}
\end{equation}
We note that the entry $b$ here is always integral, because our ideal class representative $\theta_i\in\Aa\fin^\times$ lies in $\hat\mo$ (cf.\ Section~\ref{section1}). Most of the time we shall ignore the condition $a-d\in\mq$. It will only become important in Lemma~\ref{j=2}, which is the cornerstone for the proof of Theorem~\ref{thm3}.

We now associate to each $\gamma\in\Gamma(i,m)$ a certain dyadic
\[\delta=(\delta_v)_{v\mid\infty}=(2^{k_v})_{v\mid\infty}\in (0, 4]^{v\mid \infty}\]
with the property that the contribution $|k(u(\gamma P,P))|$ to the right-hand side of \eqref{ampbound2} can be estimated in terms of $|\delta|_{\infty}$.
Pick any $\gamma\in\Gamma(i,m)$, and for each $v\mid\infty$ consider the smallest integer $k_v\in\ZZ$ such that
\[\max\left(T_v^{-2},u_v(\gamma_v P_v,P_v)\right)\leq 2^{k_v}.\]
We can restrict to the case $k_v\leq 2$, because otherwise $u_v(\gamma_v P_v,P_v)>4$ (noting that $T_v^{-2}\leq 4$), and hence $k(u(\gamma P,P))=0$ by \eqref{kudef} and Lemma~\ref{pairlemma}. Denoting $\delta_v:=2^{k_v}$ for a moment, we have either $T_v^{-2}>\delta_v/2$ or $u_v(\gamma_v P_v,P_v)>\delta_v/2$, hence in both cases we get by \eqref{boundk}
\[k_v\bigl(u_v(\gamma_v P_v,P_v)\bigr)\ll |T_v|_v^{1/2} |\delta_v|_v^{-1/4}.\]
This in turn implies, by \eqref{kudef},
\begin{equation}\label{kubound}
k\bigl(u(\gamma P,P)\bigr)\ll |T|_\infty^{1/2} |\delta|_\infty^{-1/4}.
\end{equation}

In \eqref{ampbound2}, we rearrange the matrices $\gamma\in\Gamma(i,m)$ according to their integral vectors $\bk\in\ZZ^{\{v\mid\infty\}}$, and we group together the nonzero integers $m\in\mo$ with the same number of prime factors ($0$ or $2$ or $4$ prime factors in $P(L)$ as in \eqref{ffin2}). To summarize our findings, we define for any $j\in\{0,1,2\}$ and for any nonnegative vector $\delta=(\delta_v)_{v \mid \infty}$,
\begin{equation}\label{Mdef}
M(L, j, \delta) := \#\bigcup_{l_1,l_2\in P(L)}
\left\{\gamma \in \Gamma(i, l_1^jl_2^j): \text{$u_v(\gamma_v P_v,P_v)\leq \delta_v$ for all $v\mid\infty$}\right\}.
\end{equation}
We observe that in \eqref{ampbound2} we have, by \eqref{Kdef}, \eqref{Kdef3}, \eqref{notationformula},
\[\vol(K^{\flat}\fin)^{-1} = \vol(K\fin)^{-1}\prod_{\mpr\mid\mq}[K_\mpr:K^\flat_\mpr]\ll\vol(K\fin)^{-1}\preccurlyeq\N\mn,\]
while also $w_1\ll L$ and $w_m\ll 1$ for $m\neq 1$ by \eqref{signcoeff} and \eqref{ffin2}. We conclude, also using \eqref{kubound},
\begin{equation}\label{ampbound3}
|\phi(g)|^2\preccurlyeq(\N\mn)\sum_{\substack{\bk\in\ZZ^{\{v\mid\infty\}}\\T_v^{-2}\leq\delta_v=2^{k_v}\leq 4}}
\frac{|T|_\infty^{1/2}}{|\delta|_\infty^{1/4} }
\left(\frac{M(L,0,\delta)}{L}+\frac{M(L,1,\delta)}{L^3}+\frac{M(L,2,\delta)}{L^4}\right).
\end{equation}
We stress here that $g\in\GL_2(\Aa)$ is a special matrix of the form \eqref{gspecial}. It remains to bound the matrix counts $M(L,j,\delta)$ for the special $2$-adic vectors $\delta=(\delta_v)_{v\mid\infty}$ as above. This is the subject of the next section.

\section{Counting matrices}\label{countingmatrices}

We shall analyze in depth the matrices $\gamma=\sabcd\in\GL_2(F)$ counted by $M(L, j, \delta)$ for some
\begin{equation}\label{deltabounds}
\delta=(\delta_v)_{v\mid\infty}\qquad\text{with}\qquad 0<\delta_v\leq 4,
\end{equation}
which we assume for the rest of the paper. We recall the definitions \eqref{Rm} and \eqref{Mdef}. In particular, the determinant $l:=ad-bc$ is totally positive: it is of the form $l=l_1^jl_2^j$ with $l_1,l_2\in P(L)$, hence it satisfies\footnote{Hopefully, the typographical similarity of $l_v$ and $l_1$, $l_2$ will cause no confusion.} $l_v\asymp L^{2j/n}$ for each archimedean place $v\mid\infty$.

\subsection{Estimates on matrix entries}\label{estimatesonentries}
We claim that the following inequalities hold true at any archimedean place $v\mid\infty$, with absolute implied constants (which are independent of the number field $F$, the auxiliary ideal $\mq$, etc.):
\begin{equation}\label{est_1}
|c_vP_v+d_v|=|l_v|^{1/2}(1+O(\sqrt{\delta_v})),
\end{equation}
\begin{equation}\label{est_2}
|c_vP_v-a_v|=|l_v|^{1/2}(1+O(\sqrt{\delta_v})),
\end{equation}
\begin{equation}\label{est_3}
c_vy_v\ll |l_v|^{1/2},
\end{equation}
\begin{equation}\label{est_4}
2c_vx_v-a_v+d_v\ll |l_v|^{1/2},
\end{equation}
\begin{equation}\label{est_5}
a_v+d_v\ll |l_v|^{1/2},
\end{equation}
\begin{equation}\label{est_6}
\overline{c_v}\frac{l_v}{|l_v|}y_v^2-c_vx_v^2+(a_v-d_v)x_v+b_v\ll y_v|l_v|^{1/2}\sqrt{\delta_v},
\end{equation}
\begin{equation}\label{est_7}
-c_vx_v^2+ (a_v-d_v)x_v+b_v\ll y_v |l_v|^{1/2},
\end{equation}
\begin{equation}\label{est_8c}
\re\left(\frac{2c_vx_v-a_v+d_v}{\sqrt{l_v}}\right) \ll \sqrt{\delta_v},
\end{equation}
\begin{equation}\label{est_9c}
\im\left(\frac{a_v+d_v}{\sqrt{l_v}}\right)\ll \sqrt{\delta_v}.
\end{equation}
Here $\sqrt{l_v}$ denotes either of the two square-roots of $l_v$. At the complex places, these bounds follow from \cite[Section~6]{BHM}
upon noting that $\delta_v\ll 1$. We show below that these bounds also hold at the real places.

Let $v$ be a real place. We restrict ourselves to $l_v>0$, which is the only case needed for this paper, but we stress that the bounds also hold when $l_v<0$; we omit the proof for this case in order to save space.

Starting from $\gamma_v P_v = (a_vP_v+b_v)/(c_vP_v+d_v)$, it follows that
\[\frac{\Im(\gamma_v P_v)}{\Im(P_v)} = \frac{l_v}{\|c_vP_v+d_v\|^2},\]
and so
\[\delta_v \geq u_v(\gamma_v P_v, P_v) \geq \frac{|\Im(\gamma_v P_v) - \Im(P_v)|^2}{2\Im (\gamma_v P_v) \Im(P_v)}
= \frac{1}{2}\left| \frac{|l_v|^{1/2}}{\|c_vP_v+d_v\|} - \frac{\|c_vP_v+d_v\|}{|l_v|^{1/2}} \right|^2.\]
This gives \eqref{est_1} immediately, and also \eqref{est_2} upon noting that $u_v(\gamma_v P_v, P_v)=u_v(P_v, \gamma_v^{-1} P_v)$. By considering $\im(c_vP_v+d_v)$, we obtain \eqref{est_3} from \eqref{est_1} upon noting that $\delta_v\ll 1$. By considering $\re(c_vP_v+d_v)\pm\re(c_vP_v-a_v)$, we obtain \eqref{est_4} and \eqref{est_5} from \eqref{est_1} and \eqref{est_2} upon noting that $\delta_v\ll 1$. Using also
\[\delta_v \geq u_v(\gamma_v P_v, P_v) = \frac{\|a_vP_v + b_v - c_vP_v^2 - d_vP_v\|^2}{2 l_v y_v^2},\]
we obtain \eqref{est_6} and \eqref{est_8c} by considering the real and imaginary parts of the complex number in the numerator. Finally, \eqref{est_7} is a consequence of \eqref{est_6} coupled with \eqref{est_3} and $\delta_v\ll 1$, while \eqref{est_9c} is trivial as its left hand side vanishes.

\begin{remark} As noted in the beginning of this subsection, all implied constants are absolute here. In particular, our arguments yield the following explicit version of \eqref{est_5}:
\begin{equation}\label{est_5b}
|a_v+d_v|<17|l_v|^{1/2}.
\end{equation}
\end{remark}

\subsection{Preliminary bounds} We shall use part (a) of Corollary~\ref{corollary1} several times below, whenever we need to count lattice points in a box.

\begin{lemma}\label{prelimbound} We have
\[M(L, 0, \delta) \ll 1+ |y|_{\infty}|\delta|_{\infty}^{1/2}.\]
\end{lemma}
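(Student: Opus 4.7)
My plan is to specialize the estimates of Section~\ref{countingmatrices} to the determinant $l = l_1^0 l_2^0 = 1$ (so that $|l_v|_v = 1$ at every archimedean place $v$) and to count the matrices $\gamma = \sabcd \in \Gamma(i, 1)$ with $u_v(\gamma_v P_v, P_v) \leq \delta_v$ for all $v \mid \infty$ via a case analysis on the lower-left entry $c$.

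First I would note that \eqref{est_3} gives $c_v y_v \ll 1$ at each archimedean place, whence $|c|_v \ll 1/|y_v|_v$ and $\prod_v |c|_v \ll 1/|y|_\infty$. Since $c$ lies in the fractional ideal $\theta_i^{-1}(\mn \cap \mq)$, whose norm is $\asymp \N\mn$, Corollary~\ref{corollary1}(a) yields at most $\ll 1/(|y|_\infty \N\mn)$ nonzero choices for $c$; combined with the lower bound $|y|_\infty \gg (\N\mn)^{-1}$ from Lemma~\ref{lemma5}(b), this is $\ll 1$. For each such $c \neq 0$, estimates \eqref{est_1} and \eqref{est_2} localize $d_v$ and $a_v$ individually to Euclidean balls of radius $O(1)$ around $-c_v P_v$ and $c_v P_v$ in each archimedean embedding; since $\mo$ has covolume $\asymp 1$ in $F_\infty$, a standard box-counting argument gives $O(1)$ integer choices for each of $a$ and $d$, after which $b$ is determined by $ad - bc = 1$. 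Hence the subcase $c \neq 0$ should contribute only $O(1)$ matrices.

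The main subcase is $c = 0$, where $ad = 1$ forces $a \in \mo^\times$ and $d = a^{-1}$. Here estimates \eqref{est_8c} and \eqref{est_9c} (with $c = 0$ and $l = 1$) combine to give $|a_v - a_v^{-1}| \ll \sqrt{\delta_v}$, and in particular $|a_v| \asymp 1$ at every archimedean place. By Dirichlet's unit theorem, the image of $\mo^\times$ in $\prod_{v \mid \infty} \RR$ under the log map is a lattice in the trace-zero hyperplane, so the uniform bound $|a_v| \asymp 1$ admits only $\ll 1$ units $a$, with the implicit constant depending only on $F$. For each such unit, estimate \eqref{est_6} reduces to $|(a_v - a_v^{-1}) x_v + b_v| \ll y_v \sqrt{\delta_v}$ at each $v \mid \infty$, whence $|b|_v \ll |y_v|_v |\delta_v|_v^{1/2}$ and therefore $\prod_v |b|_v \ll |y|_\infty |\delta|_\infty^{1/2}$. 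Applying Corollary~\ref{corollary1}(a) to the lattice $\theta_i \mo$, whose covolume is $\asymp 1$, then bounds the number of admissible $b$'s by $\ll 1 + |y|_\infty |\delta|_\infty^{1/2}$; summing over the $O(1)$ candidate units yields the asserted estimate.

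The main obstacle will be the Dirichlet-unit count in the $c = 0$ subcase: one must show that the uniform condition $|a_v| \asymp 1$ at every archimedean place meets $\mo^\times$ in only $O(1)$ points, with constant depending only on $F$. Crucially, this $O(1)$ must be independent of $\delta$; a weaker, $\delta$-dependent bound on the number of candidate units would multiply the $b$-count by extra factors and spoil the target inequality.
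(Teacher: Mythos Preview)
Your argument works, but two intermediate claims need small repairs. At a complex place, \eqref{est_8c} and \eqref{est_9c} with $c=0$, $l=1$ give $\re(a_v-a_v^{-1})\ll\sqrt{\delta_v}$ and $\im(a_v+a_v^{-1})\ll\sqrt{\delta_v}$, which combine only to $\bigl||a_v|-|a_v|^{-1}\bigr|\ll\sqrt{\delta_v}$, not to $|a_v-a_v^{-1}|\ll\sqrt{\delta_v}$ (take $a_v$ on the unit circle); but the conclusion $|a_v|\asymp 1$ that you actually use follows directly from \eqref{est_4}, \eqref{est_5} and $a_vd_v=1$. Likewise, from \eqref{est_6} you only get that $b_v$ lies in a ball of radius $O(y_v\sqrt{\delta_v})$ centered at $-(a_v-a_v^{-1})x_v$, not that $|b_v|\ll y_v\sqrt{\delta_v}$; the lattice count in such a translated box is still $\ll 1+|y|_\infty|\delta|_\infty^{1/2}$, so your $b$-count stands.

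The paper's proof avoids both the case split on $c$ and the explicit appeal to Dirichlet's unit theorem. It bounds $\#c\ll 1$ uniformly via \eqref{est_3} and Lemma~\ref{lemma5}(b), then $\#(a-d)\ll 1$ via \eqref{est_4}, then $\#b\ll 1+|y|_\infty|\delta|_\infty^{1/2}$ via \eqref{est_6}, and finally observes that $(c,a-d,l,b)$ determines $ad=1+bc$ and hence $(a,d)$ up to a sign. This is slightly slicker because the determinant equation does the work of your unit count; your route, on the other hand, makes visible that the main term $|y|_\infty|\delta|_\infty^{1/2}$ comes entirely from the upper-triangular ($c=0$) matrices.
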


\begin{proof} Combining \eqref{est_3} with part (b) of Lemma~\ref{lemma5}, we see that the number of possibilities for $c$ is
\begin{displaymath}
\# c \ll 1+ |y|_{\infty}^{-1}(\N\mn)^{-1} \ll 1.
\end{displaymath}
For a fixed $c$, the number of possibilities for the difference $a-d$ can be bounded similarly, using \eqref{est_4}, namely
\begin{displaymath}
\# (a-d) \ll 1.
\end{displaymath}
For a fixed pair $(c,a-d)$ and $l = \det \gamma = 1$, the number of possibilities for $b$ is, by \eqref{est_6},
\begin{displaymath}
\# b \ll 1+|y|_{\infty}|\delta|_{\infty}^{1/2}.
\end{displaymath}
Finally, the quadruple $(c,a-d,l,b)$ determines $(c,a-d,ad,b)$, and the latter determines $(c,a,d,b)$ up to two choices.
Hence we conclude the proof by multiplying the above bounds.
\end{proof}

Let $M_0(L, j, \delta)$ and $M_1(L, j, \delta)$ be the number of matrices $\gamma=\sabcd\in\GL_2(F)$ counted by $M(L, j, \delta)$ with $c=0$ and $c\neq 0$, respectively.

\begin{lemma}\label{lemma10} For $j\in\{1,2\}$ we have
\[M_0(L, j, \delta) \preccurlyeq L^2+L^{2+j}|y|_{\infty}|\delta|_{\infty}^{1/2}.\]
\end{lemma}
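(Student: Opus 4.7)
The strategy parallels and refines Lemma~\ref{prelimbound}: pull the $\ll L^2$ pairs $(l_1,l_2)\in P(L)^2$ to the outside, and for each resulting $\gamma=\sabcd\in\Gamma(i,l_1^j l_2^j)$ with $c=0$ count the admissible diagonal pair $(a,d)$ and the admissible off-diagonal entry $b$ separately. Writing $l:=l_1^j l_2^j$, we have $|l_v|\asymp L^{2j/n}$ at each $v\mid\infty$ (so $|l|_\infty\asymp L^{2j}$), and the defining relation becomes $ad=l$.

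For the $(a,d)$-count, the key new input -- the one real difference from Lemma~\ref{prelimbound} -- is the arithmetic structure of $P(L)$: since $l_1\mo$ and $l_2\mo$ are principal prime ideals, the divisor $a\mo$ of $l\mo$ is necessarily of the form $(l_1\mo)^{a_1}(l_2\mo)^{a_2}$ with $0\leq a_i\leq j$, leaving at most $(j+1)^2\leq 9$ choices of ideal (with $d\mo=l\mo/a\mo$ then forced). Setting $c=0$ in \eqref{est_4} and \eqref{est_5} gives $|a_v|,|d_v|\ll|l_v|^{1/2}$ at each archimedean $v$. For each admissible ideal $a\mo$, part~(b) of Corollary~\ref{corollary1} bounds the number of its generators $a$ (and hence of admissible pairs $(a,d)$) by $\preccurlyeq 1$.

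For the $b$-count, fixing $(a,d)$ and inserting $c=0$ into \eqref{est_6} yields $(a_v-d_v)x_v+b_v\ll y_v|l_v|^{1/2}\sqrt{\delta_v}$ at every archimedean $v$, confining $b\in\theta_i\mo$ to a translate of an archimedean box of total volume $\asymp|y|_\infty|l|_\infty^{1/2}|\delta|_\infty^{1/2}\asymp|y|_\infty L^j|\delta|_\infty^{1/2}$. A standard shift trick -- if the translated box contains some $b_0\in\theta_i\mo$, every other admissible $b$ yields $b-b_0\in\theta_i\mo$ inside the origin-centered box of twice the side length -- combined with part~(a) of Corollary~\ref{corollary1} (applied with $\mm=\theta_i\mo$, of bounded norm) gives at most $\ll 1+|y|_\infty L^j|\delta|_\infty^{1/2}$ admissible $b$. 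Multiplying the three factors yields
\[M_0(L,j,\delta)\preccurlyeq L^2\cdot 1\cdot\bigl(1+|y|_\infty L^j|\delta|_\infty^{1/2}\bigr)=L^2+L^{2+j}|y|_\infty|\delta|_\infty^{1/2},\]
as claimed. The main obstacle is the $(a,d)$-count: without the principal-prime structure of $P(L)$, dividing $l$ into $a$ and $d$ would cost a divisor function in $\N(l\mo)$, which is harmless under $\preccurlyeq$ but explains why the explicit appeal to Corollary~\ref{corollary1}(b) -- together with the factorization into at most $(j+1)^2$ ideals -- is essential. The shift trick in the $b$-count is a more cosmetic technicality, needed only because the center $-(a-d)x$ of the box need not lie in $\theta_i\mo$.
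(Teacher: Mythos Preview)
Your proof is correct and follows essentially the same route as the paper's: fix the pair $(l_1,l_2)$ (cost $\ll L^2$), then with $c=0$ bound the number of ideal factorizations $a\mo\cdot d\mo=l\mo$ and use Corollary~\ref{corollary1}(b) to count generators, and finally count $b$ via \eqref{est_6} and Corollary~\ref{corollary1}(a). The only cosmetic differences are that the paper derives $a_v,d_v\ll|l_v|^{1/2}$ from \eqref{est_1}--\eqref{est_2} rather than \eqref{est_4}--\eqref{est_5}, and simply quotes the divisor bound $\preccurlyeq 1$ for the ideal count in place of your explicit $(j+1)^2$; your commentary that the prime structure of $P(L)$ is ``essential'' slightly overstates the point, since (as you yourself note) the divisor bound is already harmless under $\preccurlyeq$.
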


\begin{proof} For the determinant $l=l_1^jl_2^j$ with $l_1,l_2\in P(L)$, there are $\ll L^2$ choices. Let us fix the determinant;
then by $l=ad$ there are $\preccurlyeq 1$ possibilities for the ideals $a\mo$ and $d\mo$. On the other hand, \eqref{est_1} and \eqref{est_2} imply
that $a_v,d_v \ll |l_v|^{1/2}$, hence by part (b) of Corollary~\ref{corollary1} there are $\preccurlyeq 1$ choices for the pair $(a,d)$. Fixing this pair, the number of possibilities for $b$ is $\ll 1+L^j|y|_{\infty}|\delta|_{\infty}^{1/2}$ by \eqref{est_6}, and we conclude the proof by multiplying the above bounds.
\end{proof}

\subsection{Parabolic matrices}\label{parabolicsection} Let us write
\[M_1(L, j, \delta) = M_2(L, j, \delta) + M_3(L, j, \delta),\qquad j\in\{1,2\},\]
where $M_2(L, j, \delta)$ and $M_3(L, j, \delta)$ stand for the number of parabolic and nonparabolic matrices counted by $M_1(L, j, \delta)$, respectively. Here we call a matrix $\gamma$ parabolic if $(\tr\gamma)^2 = 4\det\gamma$. The main result in this subsection is the bound for $M_2(L, j, \delta)$ in Lemma~\ref{parabolic-real}. Estimates for $M_3(L,j,\delta)$ are the subject of the next subsection.

\begin{lemma}\label{lemma11}
Let $j\in\{1,2\}$. Then $M_2(L, j, \delta)=0$ unless $|\delta|_{\infty}\gg L^{-2j}|y|_{\infty}^{-2}$.
\end{lemma}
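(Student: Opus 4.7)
The plan is to combine an explicit displacement formula for parabolic elements with the maximality of $|y|_\infty$ in $\FF(\mn)$. Suppose $M_2(L,j,\delta)>0$, so some parabolic $\gamma=\sabcd\in\Gamma(i,l_1^jl_2^j)$ with $c\neq 0$ contributes. Since $(a+d)^2=4l$ and $l\in F^\times$, the element $\mu:=(a+d)/2$ lies in $F$ with $\mu^2=l$, and $|\mu|_\infty\asymp L^j$. Setting $r:=(a-d)/2$, the M\"obius fixed point of $\gamma$ is $\xi=r/c\in F$, and conjugating $\gamma/\mu$ by a matrix sending $\xi$ to $\infty$ turns it into translation by $-c/\mu$; a direct computation then yields
\[u_v(\gamma_v P_v,P_v)=\frac{|c_v|^2\,|P_v-\xi_v|^4}{2\,|l_v|\,y_v^2}\qquad(v\mid\infty).\]

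The hypothesis $u_v\leq\delta_v$ gives $|c_v|\,|P_v-\xi_v|^2\leq|\mu_v|\,y_v\sqrt{2\delta_v}$, and multiplying by $|c_v|$ converts this to
\[\|c_vP_v-r_v\|^2\leq|c_v|\,|\mu_v|\,y_v\sqrt{2\delta_v},\]
using $\|c_vP_v-r_v\|^2=|c_v|^2|P_v-\xi_v|^2$. Taking the product over archimedean places,
\[\prod_{v\mid\infty}\|c_vP_v-r_v\|^{2[F_v:\RR]}\ll |c|_\infty\, L^j\,|y|_\infty\,|\delta|_\infty^{1/2}.\tag{$*$}\]

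For the matching lower bound the plan is to exploit the maximality of $|y|_\infty$ via Lemma~\ref{lemma4}. Consider $\alpha:=\bigl(\begin{smallmatrix}0&-1\\c&-r\end{smallmatrix}\bigr)\in\GL_2(F)$, which has $\det\alpha=c\neq 0$, and apply it to the special matrix $g=\syx\bigl(\begin{smallmatrix}\theta_i&\\&1\end{smallmatrix}\bigr)$. A computation of the Iwasawa height of $\alpha g$ using Lemmata~\ref{lemma1}--\ref{lemma2} (entirely parallel to the proof of Lemma~\ref{lemma5}(b)) shows that the archimedean contribution is $|c|_\infty|y|_\infty/\prod_{v\mid\infty}\|c_vP_v-r_v\|^{2[F_v:\RR]}$, while the non-archimedean contribution is bounded below by an absolute constant. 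The key point is that the congruence conditions $c\in\theta_i^{-1}(\mn\cap\mq)$ and $2r\in\mq$ supply additional divisibility at every prime dividing $\mn\mq$, which absorbs the $(\N\mn)^{-1}$-loss that Lemma~\ref{lemma5}(b) encounters in \eqref{eq6}. Since $g\in\FF(\mn)$ realizes the maximum height, $|\widetilde y|_\infty\leq|y|_\infty$, and rearranging yields
\[\prod_{v\mid\infty}\|c_vP_v-r_v\|^{2[F_v:\RR]}\gg|c|_\infty.\tag{$**$}\]

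Combining $(*)$ and $(**)$ and canceling $|c|_\infty$ gives $1\ll L^j|y|_\infty|\delta|_\infty^{1/2}$, i.e.\ $|\delta|_\infty\gg L^{-2j}|y|_\infty^{-2}$, as claimed. The principal obstacle in this plan is the verification that the non-archimedean Iwasawa factor of $\alpha g$ is genuinely $\gg 1$: one must trace \eqref{eq6} at each prime $\mpr\mid\mn\mq$, check that the $\mpr$-adic valuations of $c\theta_i$ (coming from $c\theta_i\in\mn\cap\mq$) and of $2r$ (coming from $2r\in\mq$) are large enough to swallow the $\varpi_\mpr$-factor arising in the $|c|_v\geq|d|_v$ branch of Lemma~\ref{lemma2}, and that the $\mpr$-adic $\gcd$ appearing in the denominator does not grow out of control. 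Apart from this bookkeeping at the finite places, the argument is a straightforward combination of the horocyclic displacement formula with the height-maximality already encoded in the definition of $\FF(\mn)$.
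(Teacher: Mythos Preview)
Your approach is valid and leads to the same conclusion, but it differs from the paper's. The paper conjugates $\gamma$ by an Atkin--Lehner--compatible scaling matrix $\sigma$ obtained from \eqref{srfeature}, so that $\sigma^{-1}\gamma\sigma=\bigl(\begin{smallmatrix}\lambda&b'\\&\lambda\end{smallmatrix}\bigr)$ with $\lambda^2=l$, and then uses the Atkin--Lehner normalisation of $K^\ast$ to show that the off-diagonal entry satisfies $b'\in\theta_j\mo$. The integrality $|b'|_\infty\gg 1$ (since $b'\neq 0$) together with the maximality $|\tilde y|_\infty\leq|y|_\infty$ immediately gives the lemma. By contrast, you bypass the Atkin--Lehner machinery and instead bound the non-archimedean Iwasawa height of $\alpha g$ from below directly. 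Your route is more hands-on; the paper's is more structural. Both ultimately rest on the same maximality principle in $\FF(\mn)$.

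Two corrections to your plan. First, the $\mq$-congruences are a red herring here: Lemma~\ref{lemma2} introduces the $\varpi_\mpr$-factor only at primes $\mpr\mid\mn$, so the conditions $c\theta_i\in\mq\hat\mo$ and $a-d\in\mq$ play no role. What you actually need at $\mpr\mid\mn$ is just $v_\mpr(c\theta_i)\geq 1$, which comes from $c\in\theta_i^{-1}\mn$. Second, and more importantly, the ``bookkeeping'' you flag is not purely mechanical: controlling the $\gcd$-factor requires the parabolic identity $r^2=-bc$. Indeed, at a prime with $|r|_\mpr>|c\theta_i|_\mpr$, the local height factor $|c\theta_i|_\mpr/|r|_\mpr^2$ is \emph{a priori} dangerous, but $|r|_\mpr^2=|bc|_\mpr=|b\theta_i^{-1}|_\mpr\cdot|c\theta_i|_\mpr$ (with $b\theta_i^{-1}\in\hat\mo$) shows it equals $|b\theta_i^{-1}|_\mpr^{-1}\geq 1$. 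With this observation (and noting that $r^2=-bc\in\hat\mo$ forces $r\in\mo$), every local non-archimedean factor is $\geq 1$, and your $(**)$ follows. You should make this step explicit; without it the obstacle you name is not merely bookkeeping but the crux of the argument.
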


\begin{proof} Assume that $\gamma=\sabcd\in\GL_2(F)$ is a matrix counted by $M_2(L, j, \delta)$, so that $c\neq 0$ and $(a+d)^2=4(ad-bc)$. Then $\gamma$ has a unique fixed point in $\Hb\cup F_\infty$, namely the field element $(a-d)/(2c)\in F$ embedded in $F_\infty$. For convenience we write $p:=a-d$ and $q:=2c$, and we extend these elements to an invertible matrix $\bigl(\begin{smallmatrix} s & -r \\ -q & p \end{smallmatrix}\bigr) \in \GL_2(F)$.

By Lemma~\ref{lemma3}, and recalling again \eqref{gspecial}, we have a decomposition
\[\begin{pmatrix} s & -r \\ -q & p \end{pmatrix}\yx\begin{pmatrix} \theta_i \\ & 1 \end{pmatrix}=
\tst\yxt \begin{pmatrix} \theta_j \\ & 1 \end{pmatrix} k,\]
where $\stst\in\gP(F)$, $\syxt\in\gP(F_\infty)$, and $k\in K^\ast$. Multiplication on the left by $\stst^{-1}$ does not affect the bottom row of $\bigl(\begin{smallmatrix} s & -r \\ -q & p \end{smallmatrix}\bigr)$, and so we may assume that $s,r\in F$ have been chosen so that
\begin{equation}\label{srfeature}
\begin{pmatrix} s & -r \\ -q & p \end{pmatrix}\yx\begin{pmatrix} \theta_i \\ & 1 \end{pmatrix}=
\yxt \begin{pmatrix} \theta_j \\ & 1 \end{pmatrix} k.
\end{equation}
Furthermore, by changing $k$ if necessary, we may assume that $\tilde y_v>0$ for any archimedean place $v\mid\infty$ (cf.\ \eqref{rotate}). Note that $|y|_\infty\geq|\tilde y|_\infty$ by $\syx\bigl(\begin{smallmatrix}\theta_i&\\&1\end{smallmatrix}\bigr)\in\FF(\mn)$.

With such a choice of $\bigl(\begin{smallmatrix} s&-r\\ -q&p\end{smallmatrix}\bigr)$ and $k$, we set $\sigma := \frac{1}{ps-qr}\bigl(\begin{smallmatrix} p & r \\ q & s \end{smallmatrix}\bigr)\in\GL_2(F)$ for the corresponding inverse, and we examine the parabolic matrix $\sigma^{-1}\gamma\sigma\in\GL_2(F)$ by looking at its infinite and finite component separately. The infinite component has (unique) fixed point $\infty$, which implies readily that
\begin{equation}\label{conjugatedgamma}
\sigma^{-1}\gamma\sigma=\begin{pmatrix}\lambda & b' \\ & \lambda\end{pmatrix}\in\GL_2(F)
\end{equation}
with $\lambda^2=l$. In particular, $\lambda\in F$ implies $\lambda\in\mo$ (i.e.\ $l$ is a square). We claim that $b'\in\theta_j\mo$. Note that $b'\neq 0$, because $c\neq 0$; therefore, this will furnish the useful bound $|b'|_{\infty}\gg 1$.

To justify the claim, we start by observing that \eqref{srfeature} yields
\[\sigma\fin^{-1}=\begin{pmatrix} s & -r \\ -q & p \end{pmatrix}\fin =
\begin{pmatrix} \theta_j \\ & 1 \end{pmatrix} k\fin\begin{pmatrix} \theta_i^{-1} \\ & 1 \end{pmatrix},\]
\[(\sigma^{-1}\gamma\sigma)\fin=
\begin{pmatrix} \theta_j \\ & 1 \end{pmatrix} k\fin
\begin{pmatrix} \theta_i^{-1} \\ & 1 \end{pmatrix}\abcd\fin\begin{pmatrix} \theta_i \\ & 1 \end{pmatrix}
k\fin^{-1}\begin{pmatrix} \theta_j^{-1} \\ & 1 \end{pmatrix}.\]
As the determinant $ad-bc$ is coprime to $\mn$, our assumptions (cf.\ \eqref{Rm}) yield for the middle part
\[\begin{pmatrix} \theta_i^{-1} \\ & 1 \end{pmatrix}\abcd\fin\begin{pmatrix} \theta_i \\ & 1 \end{pmatrix}\in
\prod_{\mpr\nmid\mn}\M_2(\mo_\mpr)\prod_{\mpr\mid\mn}K_\mpr.\]
The set on the right hand side is normalized by the Atkin--Lehner group $K\fin^*:=\prod_\mpr K_\mpr^*$, hence we infer
\[(\sigma^{-1}\gamma\sigma)\fin\in \begin{pmatrix} \theta_j \\ & 1 \end{pmatrix}\M_2(\hat\mo)\begin{pmatrix} \theta_j \\ & 1 \end{pmatrix}^{-1}.\]
By \eqref{conjugatedgamma}, this implies $b'\in\theta_j\mo$ as claimed.

Looking at the infinite component again, \eqref{srfeature} shows that $\sigma^{-1}P=\tilde P$, where $P=P(x,y)\in\Hb$ is as before, and
$\tilde P:=P(\tilde x,\tilde y)\in\Hb$. With this notation, we get for any archimedean place $v\mid\infty$,
\begin{displaymath}
u_v\left(\gamma P_v, P_v \right) =
u_v\big( \sigma^{-1} \gamma P_v, \sigma^{-1} P_v \big) =
u_v\big(\sigma^{-1} \gamma \sigma \tilde P_v, \tilde P_v \big) =
u_v\left(\begin{pmatrix}\lambda & b' \\ & \lambda\end{pmatrix} \tilde P_v, \tilde P_v\right).
\end{displaymath}
Bounding the left hand side by $\delta_v$ and evaluating the right hand side via \eqref{udef}, we get with the usual absolute value in each archimedean completion $F_v$ that $\delta_v\gg|b'_v|^2|\lambda_v|^{-2}{\tilde y}_v^{-2}$. In particular,
$|\delta|_{\infty}\gg|b'|_{\infty}^2L^{-2j}|\tilde y|_\infty^{-2}$. However, as we have remarked earlier, $|b'|_{\infty}\gg 1$ and $|y|_\infty\geq |\tilde y|_\infty$, and therefore $|\delta|_{\infty}\gg L^{-2j}|y|_\infty^{-2}$.
\end{proof}

We recall now the notation
\[|\delta|_\infty=|\delta|_{\RR}\cdot|\delta|_{\RR},\qquad
|\delta|_{\RR}=\prod_{\text{$v$ real}}\delta_v,\qquad |\delta|_{\CC}=\prod_{\text{$v$ complex}}\delta_v^2,\]
which is a special case of \eqref{RnormCnorm}.

\begin{lemma}\label{parabolic-real} For $j\in\{1,2\}$ we have
\[M_2(L, j, \delta) \preccurlyeq L^{3j} |\delta|_{\RR}^{3/4} |\delta|_{\CC}^{1/4} (\N\mn)^{-1}.\]
\end{lemma}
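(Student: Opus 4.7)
The plan is to exploit the parabolic structure together with the geometric estimates of Section~\ref{estimatesonentries} and the lattice-counting machinery of Section~\ref{sec-counting}. Since $\gamma\in\Gamma(i,l)$ is parabolic with $c\ne 0$, the identity $(a+d)^2=4l$ forces $l=l_1^j l_2^j$ to be a perfect square in $\mo$. For $j=1$ this requires $l_1=l_2$, yielding $\#P(L)\asymp L$ admissible pairs; for $j=2$ every pair works, giving $(\#P(L))^2\asymp L^2$ pairs. In both cases $\ll L^j$ admissible $(l_1,l_2)$ remain, and for each we fix $\lambda\in\mo$ with $\lambda^2=l$ (two choices), so that $a+d=\pm 2\lambda$. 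Writing $p:=(a-d)/2\in\mo$, the parabolic relation $ad-bc=\lambda^2$ produces $b=-p^2/c$, so $\gamma$ is fully parametrized by the quintuple $(l_1,l_2,\lambda;c,p)$. The integrality conditions in \eqref{Rm} together with $b\in\theta_i\mo$ confine $c\in\theta_i^{-1}(\mn\cap\mq)\setminus\{0\}$ and force $p$ to lie in the lattice of elements satisfying $2p\in\mq$ and $(p)^2\subseteq(c)(\theta_i)$; since $\mn\cap\mq$ is square-free, this last divisibility implies in particular $p\in\mn\cap\mq$.

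Next I apply the size bounds. From~\eqref{est_3} one has $|c_v y_v|\ll L^{j/n}$. Substituting $a_v-d_v=2p_v$ and $b_v=-p_v^2/c_v$ into~\eqref{est_6} and multiplying by $-c_v$ yields the key quadratic identity
\[(c_v x_v-p_v)^2 - \omega_v|c_v|^2 y_v^2 \ll |c_v| y_v L^{j/n}\sqrt{\delta_v},\qquad \omega_v:=l_v/|l_v|,\]
which confines $c_v x_v-p_v$ to intervals of length $\ll L^{j/n}\sqrt{\delta_v}$ around $\pm c_v y_v$ at real places, and to disks of radius $\ll L^{j/n}\sqrt{\delta_v}$ around $\pm|c_v|y_v\sqrt{\omega_v}$ at complex places. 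Crucially, at each complex place the refinement~\eqref{est_8c}, combined with the parabolic identity that renders $(c_v x_v-p_v)/\lambda_v\approx \pm|c_v|y_v/\sqrt{|l_v|}$ essentially real, sharpens~\eqref{est_3} to $|c_v| y_v\ll L^{j/n}\sqrt{\delta_v}$, producing an extra saving of $|\delta|_\CC^{1/2}$ in the $c$-count.

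Finally, I count the $(c,p)$ pairs via Corollary~\ref{corollary1}. The admissible $c$'s number $\ll 1+L^j|\delta|_\CC^{1/2}/(|y|_\infty\N\mn)$; for each such $c$, the admissible $p\in\mn\cap\mq$ number $\ll 1+L^j|\delta|_\RR^{1/2}|\delta|_\CC^{1/2}/\N\mn$. Multiplying by the $\ll L^j$ choices for $(l_1,l_2,\lambda)$ and eliminating $|y|_\infty$ via Lemma~\ref{lemma11} yields a bound to be compared with the claimed $L^{3j}|\delta|_\RR^{3/4}|\delta|_\CC^{1/4}(\N\mn)^{-1}$. The main difficulty lies in extracting the asymmetric exponents $|\delta|_\RR^{3/4}|\delta|_\CC^{1/4}$ — rather than the naive $|\delta|_\RR^{1/2}|\delta|_\CC^{1/2}$ — which will require an interpolation (geometric mean) between the bound above and a complementary one obtained by dropping the $\sqrt{\delta_v}$-saving from~\eqref{est_6} at complex places, together with a careful exploitation of the $c$-dependent refinement $(p)^2\subseteq(c)(\theta_i)$ of the lattice for $p$ when $|c|_\infty$ is large.
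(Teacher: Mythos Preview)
Your framework—parametrizing parabolic $\gamma$ by $(\lambda,c,p)$ with $p=(a-d)/2$ and $b=-p^2/c$, then counting lattice points—is the right one and parallels the paper's, but the proposal contains a concrete error and is left incomplete.

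The claimed sharpening $|c_v|y_v\ll L^{j/n}\sqrt{\delta_v}$ at complex places is not correct: only $\delta_v^{1/4}$ is achievable. Dividing your quadratic identity by $\lambda_v^2$ and writing $w:=(c_vx_v-p_v)/\lambda_v$ and $r:=|c_v|y_v/|\lambda_v|$, one obtains $w^2-r^2\ll\sqrt{\delta_v}$; combined with $\re w\ll\sqrt{\delta_v}$ from \eqref{est_8c}, taking real parts with $w=u+iv$ gives only $v^2+r^2\ll\sqrt{\delta_v}+u^2\ll\sqrt{\delta_v}$, hence $r\ll\delta_v^{1/4}$. The configuration $w=i\delta_v^{1/4}$, $r=\delta_v^{1/4}$ satisfies both constraints, so this is sharp. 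The same overestimate contaminates your $p$-count (the ``disks of radius $\ll L^{j/n}\sqrt{\delta_v}$'' at complex places are really of radius $\ll L^{j/n}\delta_v^{1/4}$). With the true saving $|\delta|_\CC^{1/4}$ rather than $|\delta|_\CC^{1/2}$ the balance of terms changes, and your proposed interpolation—which in any case is only announced, not executed—would need to be rethought from scratch.

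The paper's route is different and avoids any interpolation. It extracts the $\delta_v^{1/4}$ saving on $c$ at the \emph{real} places only, via the elementary identity $(c_vy_v)^2=|c_vP_v+d_v|^2-(c_vx_v+d_v)^2$: by \eqref{est_1} and the parabolic relation $a+d=2\lambda$ together with \eqref{est_8c}, both squared terms equal $\lambda_v^2(1+O(\sqrt{\delta_v}))$, whence their difference is $\ll\lambda_v^2\sqrt{\delta_v}$. At complex places the paper uses only \eqref{est_3}. It then bounds, for fixed $c$,
\[
\#(a-d)\ll 1+L^j|\delta|_\RR^{1/2}\,\N(c\mo)^{-1/2},
\]
using \eqref{est_8c} at real places, \eqref{est_4} at complex places, and exactly the divisibility you note, $(a-d)^2\in 4c\theta_i\mo$, which forces $a-d$ into a sublattice of index $\gg\N(c\mo)^{1/2}$. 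Summing over $c$ with the aid of Corollary~\ref{corollary1}(b) produces two terms in $|y|_\infty^{-1}$ and $|y|_\infty^{-1/2}$; bounding $|y|_\infty$ from below via Lemma~\ref{lemma11} and using $|\delta|_\RR,|\delta|_\CC\ll 1$ shows each is $\preccurlyeq L^{2j}|\delta|_\RR^{3/4}|\delta|_\CC^{1/4}(\N\mn)^{-1}$. Multiplying by the $\ll L^j$ choices for the trace gives the lemma directly, with no interpolation step.
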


\begin{remark}\label{rem4} This result is a number field version of \cite[Lemma~4.4]{Te}. Unfortunately, we were unable to reconstruct the proof of Templier's lemma. It remains unclear to us how the referenced argument in \cite{HT1} would generalize to produce the bound of \cite[Lemma~4.4]{Te}, as there does not appear to be an obvious reason why the number of scaling matrices $\sigma_{\ma}$ is uniformly bounded. Here we give a simple but robust proof of an alternative bound that suffices for our purposes and would also suffice for \cite{Te}.
\end{remark}

\begin{proof} Assume that $\gamma=\sabcd\in\GL_2(F)$ is a matrix counted by $M_2(L, j, \delta)$, so that $c\neq 0$ and $(a+d)^2=4(ad-bc)$. Then, as in the previous proof, the determinant $l=ad-bc$ is a square, and $a+d=2\lambda$ holds for one the two square-roots $\lambda$ of $l$. At a real place $v$, we can estimate $c_v$ by \eqref{est_1} and \eqref{est_8c} as follows:
\[(c_v y_v)^2=|c_v P_v + d_v|^2-(c_vx_v+d_v)^2=\lambda_v^2(1+O(\sqrt{\delta_v}))^2-\lambda_v^2(1+O(\sqrt{\delta_v}))^2
\ll \lambda_v^2\sqrt{\delta_v},\]
and hence
\begin{equation}\label{c-real}
c_vy_v \ll L^{j/n} \delta_v^{1/4}.
\end{equation}
At a complex place $v$, we record the simpler and weaker bound \eqref{est_3}:
\begin{equation}\label{c-complex}
c_vy_v \ll L^{j/n}.
\end{equation}
For a fixed $c\neq 0$, the identity $(a-d)^2+4bc=0$ shows that the integer $a-d$ is divisible by a fixed ideal of norm at least $(\N(c\mo))^{1/2}$, hence by \eqref{est_8c} and \eqref {est_4} combined with part (a) of Corollary~\ref{corollary1}, there are
\[\#(a-d)\ll 1 + L^{j}|\delta|_{\RR}^{1/2} (\N(c\mo))^{-1/2}\]
choices for this integer. We estimate the total number of pairs $(c,a-d)$ by summing these bounds over all nonzero elements $c\in\theta_i^{-1}\mn$ that satisfy \eqref{c-real} and \eqref{c-complex}, collecting first the pairs corresponding to a given fractional ideal $c\mo$, and then applying part (b) of Corollary~\ref{corollary1} for each such subsum. In this way we get, for any $\eps>0$,
\[\#(c, a-d) \preccurlyeq \frac{L^{j} |\delta|_{\RR}^{1/4} }{|y|^{1+\eps}_{\infty}(\N\mn)} +
\frac{L^{3j/2} |\delta|_{\RR}^{5/8}}{|y|^{1/2+\eps}_{\infty}(\N\mn)}
\preccurlyeq \frac{L^{2j} |\delta|_{\RR}^{3/4} |\delta|_{\CC}^{1/4} }{\N\mn},\]
where in the last step we have bounded $|y|_{\infty}$ from below by invoking Lemma~\ref{lemma11}.
Finally, the trace $a+d=2\lambda$ can be chosen in $\ll L^{j}$ ways, so by $(a-d)^2+4bc=0$ the total number of possibilities for the parabolic matrix $\gamma$ is
\[\#(c,a-d,a+d)\preccurlyeq L^{3j} |\delta|_{\RR}^{3/4} |\delta|_{\CC}^{1/4} (\N\mn)^{-1}.\]
The proof is complete.
\end{proof}

\subsection{Generic matrices} Again, we shall use part (a) of Corollary~\ref{corollary1} several times below.

\begin{lemma}\label{newlemma} We have
\begin{align}
\label{subdiv14}M_3(L,1,\delta)\ll L^2&+\frac{L^{5/2}|\delta|_\RR^{1/4}}{(\N\mn)^{1/4}}+\frac{L^4|\delta|_\RR|\delta|_\CC^{3/4}}{\N\mn},\\[4pt]
\label{subdiv15}M_3(L,2,\delta)\preccurlyeq L^2&+\frac{L^4|\delta|_\RR^{1/2}}{(\N\mn)^{1/2}}+\frac{L^6|\delta|_\RR|\delta|_\CC^{1/2}}{\N\mn}.
\end{align}
\end{lemma}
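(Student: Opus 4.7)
The plan is to execute a stage-by-stage entry count on the matrices $\gamma=\sabcd\in\Gamma(i,l_1^jl_2^j)$ subject to $c\neq 0$ and $u_v(\gamma_v P_v,P_v)\leq\delta_v$, using the diophantine estimates of Section~\ref{estimatesonentries} together with Corollary~\ref{corollary1}(a). First I would fix the pair $(l_1,l_2)\in P(L)^2$, contributing a factor of at most $L^2$ to the outer sum; minor care is needed for $j=2$ to handle the at most $O_\eps((\N l)^\eps)$ factorizations $l=l_1^2l_2^2$, which accounts for the $\preccurlyeq$ in \eqref{subdiv15} versus the unconditional $\ll$ in \eqref{subdiv14}.

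Having fixed $l=l_1^jl_2^j$, I would then bound the entries in the order $c$, $a-d$, $b$, recovering $(a,d)$ at the end up to two choices from the quadratic $d^2+(a-d)d-(l+bc)=0$ (the non-parabolic hypothesis ensures its discriminant is nonzero). From \eqref{est_3} one has $|c_vy_v|\ll L^{j/n}$ at each $v\mid\infty$; since $c\in\theta_i^{-1}(\mn\cap\mq)$ lies in a fractional ideal of norm $\asymp\N\mn$, Corollary~\ref{corollary1}(a) gives $\#c\ll 1+L^j/(|y|_\infty\N\mn)$. Given $c$, the estimates \eqref{est_4} and \eqref{est_8c} confine $a-d\in\mq$ to an interval of length $\ll L^{j/n}\sqrt{\delta_v}$ at each real place and to a strip of area $\ll L^{2j/n}\sqrt{\delta_v}$ at each complex place, yielding $\#(a-d)\ll 1+L^j|\delta|_\RR^{1/2}|\delta|_\CC^{1/4}$. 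Given $(c,a-d)$, the estimate \eqref{est_6} confines $b\in\theta_i\mo$ to a region of total module volume $\ll |y|_\infty L^j|\delta|_\RR^{1/2}|\delta|_\CC^{1/2}$, so $\#b\ll 1+|y|_\infty L^j|\delta|_\RR^{1/2}|\delta|_\CC^{1/2}$.

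The main obstacle is the elimination of $|y|_\infty$ from the resulting product bound. The factors for $\#c$ and $\#b$ depend on $|y|_\infty$ in opposite directions; their product has a $|y|_\infty$-free cross term $L^{2j}|\delta|_\RR^{1/2}|\delta|_\CC^{1/2}/\N\mn$, which when multiplied by $L^2\cdot\#(a-d)$ produces the last terms of \eqref{subdiv14} and \eqref{subdiv15}, respectively. For the residual $|y|_\infty$-dependent contributions, I would exploit the a priori range $(\N\mn)^{-1}\ll|y|_\infty\ll L^j\N\mn$ (the lower bound is Lemma~\ref{lemma5}(b); the upper bound follows from the requirement that the feasible region for $c$ actually contain a nonzero lattice element) and interpolate by AM-GM on the two extremes. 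The square-root balance of the competing expressions $L^j/(|y|_\infty\N\mn)$ and $|y|_\infty L^j|\delta|_\RR^{1/2}|\delta|_\CC^{1/2}$ produces exactly the middle exponents $(\N\mn)^{-1/4}$ in \eqref{subdiv14} and $(\N\mn)^{-1/2}$ in \eqref{subdiv15}, together with the stated intermediate powers of $L$ and $|\delta|_\RR$.

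The hard part of the proof is thus not the diophantine estimates themselves but rather the combinatorial bookkeeping of this AM-GM optimization, and the careful accounting of how $|\delta|_\RR$ and $|\delta|_\CC$ enter at real versus complex places through the asymmetric inequalities of Section~\ref{estimatesonentries} --- this is what accounts for the different $|\delta|_\CC$-exponents ($3/4$ for $j=1$ versus $1/2$ for $j=2$) in the two cases. As an alternative streamlining, one may count the pair $(c,d)$ jointly by applying Lemma~\ref{lemma5}(d) to $\Lambda(P)$ with ball radius $R\asymp L^{j/n}$ (justified by \eqref{est_1}), after which $a$ is counted from \eqref{est_2} and $b$ is determined by the determinant equation; this packaging should give a cleaner extraction of the $(\N\mn)^{1/2}$ and $(|y|_\infty\N\mn)^{-1}$ terms that feed into the final balancing.
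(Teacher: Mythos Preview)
Your outline does not reach the stated bounds, and there are several genuine gaps.

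First, fixing $(l_1,l_2)$ at the outset and paying a factor $L^2$ is too expensive. After doing so, your product $L^2\cdot\#c\cdot\#(a-d)\cdot\#b$ contains the cross term coming from $\#c\ll 1$, $\#b\ll 1$ and $\#(a-d)\ll L^j|\delta|_\RR^{1/2}|\delta|_\CC^{1/4}$, namely $L^{2+j}|\delta|_\RR^{1/2}|\delta|_\CC^{1/4}$. For $j=1$ this is $L^{3}|\delta|_\RR^{1/2}|\delta|_\CC^{1/4}$, which in general is not dominated by any of the three terms in \eqref{subdiv14} (take e.g.\ $\delta_v\asymp 1$ and $\N\mn$ large). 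The paper avoids this by \emph{not} fixing $l$; instead one fixes $c$, counts the pair $(a-d,b)$ \emph{jointly} as a point of $\Lambda(P)$ via Lemma~\ref{lemma5}(d), and recovers $l$ at the end through a separate count of $a+d$.

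Second, your count $\#(a-d)\ll 1+L^j|\delta|_\RR^{1/2}|\delta|_\CC^{1/4}$ from \eqref{est_4} and \eqref{est_8c} is not an application of Corollary~\ref{corollary1}(a): at a complex place, those estimates confine $a-d$ to a \emph{strip} (one real direction of length $\asymp L^{j/n}\sqrt{\delta_v}$, the other of length $\asymp L^{j/n}$), not to a module ball $|x|_v\leq|y_v|_v$. The paper handles this by the subdivision device: one localizes $\arg(l_v)$ and then the real and imaginary parts of $(a'\mp d')/\sqrt{l_{\bn,v}}$ into $O(\delta_v^{-1/2})$ cells each, picks a reference matrix in each cell, and works with differences that now lie in genuine disks. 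A further unit rescaling balances the $\delta_v$ across places so that Lemma~\ref{lemma5}(d) applies to the point $(\tilde a-\tilde d)P+\tilde b\in\Lambda(P)$.

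Third, the ``AM--GM interpolation'' you describe runs in the wrong direction (AM--GM gives a \emph{lower} bound for a sum), and the geometric mean you would compute does not produce the middle terms of \eqref{subdiv14}--\eqref{subdiv15} anyway. In the paper the $|y|_\infty$-dependence disappears because the factor $L^j/(|y|_\infty\N\mn)$ from $\#c$ is multiplied against the $|y|_\infty$ appearing in the lattice count of Lemma~\ref{lemma5}(d); the resulting intermediate bound still has a spurious $|\delta|_\CC^{-1/4}$ (resp.\ $|\delta|_\CC^{-1/2}$), which is removed by the monotonicity trick of replacing $\delta$ by a larger $\tilde\delta$ with $|\tilde\delta|_\CC$ pushed up to the crossover value.

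Finally, your explanation of why $j=2$ carries $\preccurlyeq$ rather than $\ll$ is off. The point is not the number of factorizations of $l$, but that for $j=2$ the determinant $l=(l_1l_2)^2$ is a perfect square, so the nonparabolic condition gives a \emph{nonzero} factorization
\[(a-d)^2+4bc=(a+d-2l_1l_2)(a+d+2l_1l_2).\]
Once $(c,a-d,b)$ are fixed, the left side is a fixed nonzero integer, and a divisor bound together with \eqref{est_5} and Corollary~\ref{corollary1}(b) gives $\#(a+d)\preccurlyeq 1$. This replaces the weaker count $\#(a+d)\ll 1+L^j|\delta|_\CC^{1/2}$ used for $j=1$ and is exactly what yields the exponent $|\delta|_\CC^{1/2}$ in \eqref{subdiv15} instead of $|\delta|_\CC^{3/4}$.
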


\begin{remark}
Our proof actually shows that \eqref{subdiv14} holds for $M_1(L,1,\delta)$ in place of $M_3(L,1,\delta)$, but we preferred the current formulation for harmony.
\end{remark}

\begin{proof} Let $j\in\{1,2\}$. If $M_3(L, j, \delta)$ vanishes, then the bound stated for it holds trivially. Otherwise, we fix some $c\neq 0$ that occurs in $M_3(L, j, \delta)$. By \eqref{est_3}, the number of possibilities for $c$ satisfies
\begin{equation}\label{subdiv6}
\# c \ll \frac{L^j}{|y|_{\infty}(\N\mn)}.
\end{equation}
We denote by $M_3(L, j, \delta, c)$ the subcount of $M_3(L,j,\delta)$ with the given $c$, and we shall subdivide it as
\begin{equation}\label{subdiv7}
M_3(L, j, \delta, c)=M_3(\ast)=\sum_\bn M_3(\ast,\bn)=\sum_{\bn,\bp} M_3(\ast,\bn,\bp)=\sum_{\bn,\bp,\bq} M_3(\ast,\bn,\bp,\bq),
\end{equation}
where $\ast$ abbreviates ``$L, j, \delta, c$'', and $\bn=(n_v)$, $\bp=(p_v)$, $\bq=(q_v)$ are vectors in $\ZZ^{\{\text{$v$ complex}\}}$ satisfying
\begin{equation}\label{subdiv8}
n_v,p_v,q_v\ll\delta_v^{-1/2},\qquad\text{$v$ complex}.
\end{equation}
The role of the parameters $\bn$, $\bp$, $\bq$ is to partially localize $l$, $a-d$, $a+d$ at the various complex places, so that we can make the most of the bounds collected in Subsection~\ref{estimatesonentries}.

The components of $\bn\in\ZZ^{\{\text{$v$ complex}\}}$ satisfy
\[0 \leq n_v < 2\pi/\sqrt{\delta_v},\qquad\text{$v$ complex},\]
and we denote by $M_3(\ast,\bn)$ the subcount of $M_3(\ast)$ with the additional condition
\begin{equation}\label{ncond}
n_v \sqrt{\delta_v} \leq \arg(l_v) < (n_v + 1) \sqrt{\delta_v},\qquad\text{$v$ complex},
\end{equation}
for the determinant $l=ad-bc$. If $M_3(\ast,\bn)$ vanishes, then we subdivide it trivially as
\[M_3(\ast,\bn)=M_3(\ast,\bn,\mathbf{0})=M_3(\ast,\bn,\mathbf{0},\mathbf{0})=0.\]
Otherwise, we fix a matrix $\gamma_\bn=\bigl(\begin{smallmatrix}a_\bn&b_\bn\\c&d_\bn\end{smallmatrix}\bigr)$ counted by $M_3(\ast,\bn)$. Any matrix $\gamma=\sabcd$ counted by $M_3(\ast,\bn)$ is determined by the differences
\begin{equation}\label{diff}
a':=a-a_\bn,\qquad d':=d-d_\bn,\qquad b':=b-b_\bn.
\end{equation}
By \eqref{ncond}, the determinants $l:=\det\gamma$ and $l_\bn:=\det\gamma_\bn$ satisfy
\begin{equation}\label{lcond}
\frac{l_v}{|l_v|}-\frac{l_{\bn,v}}{|l_{\bn,v}|}\ll \sqrt{\delta_v}\quad\text{and}\quad
\frac{\sqrt{l_v}}{\sqrt{l_{\bn,v}}} = \left|\frac{l_v}{l_{\bn,v}}\right|^{1/2} + O(\sqrt{\delta_v}),\qquad v\mid\infty,
\end{equation}
with a suitable choice of the square-roots, upon noting that our determinants are totally positive.
Using $l_v,l_{\bn,v}\asymp L^{2j/n}$ and $y_v\asymp|y|_\infty^{1/n}$ (cf.\ \eqref{fund}), the first part of \eqref{lcond} combined with \eqref{est_3} and \eqref{est_6} yields
\begin{equation}\label{subdiv5}
(a_v'-d_v')x_v + b_v' \ll L^{j/n}|y|_\infty^{1/n}\sqrt{\delta_v},\qquad v\mid\infty,
\end{equation}
while the second part of \eqref{lcond} combined with \eqref{est_8c} and \eqref{est_9c} yields
\begin{equation}\label{subdiv1}
\re\left(\frac{a'_v - d'_v}{\sqrt{l_{\bn,v}}}\right), \ \im\left(\frac{a'_v + d'_v}{\sqrt{l_{\bn,v}}}\right)\ll \sqrt{\delta_v},
\qquad v\mid\infty.
\end{equation}
We complement these bounds with the simpler relations that follow from \eqref{est_4} and \eqref{est_5},
\begin{equation}\label{subdiv2}
\im\left(\frac{a'_v - d'_v}{\sqrt{l_{\bn,v}}}\right), \ \re\left(\frac{a'_v + d'_v}{\sqrt{l_{\bn,v}}}\right)\ll 1,
\qquad v\mid\infty.
\end{equation}
Of course, the above bounds for imaginary parts are trivial at the real places.

Now we denote by $M_3(\ast,\bn,\bp,\bq)$ the subcount of $M_3(\ast,\bn)$ with the additional conditions
\begin{align}
\label{subdiv3}p_v\sqrt{\delta_v}&\leq\im\left(\frac{a'_v - d'_v}{\sqrt{l_{\bn,v}}}\right)<(p_v+1)\sqrt{\delta_v},\qquad\text{$v$ complex};\\
\label{subdiv4}q_v\sqrt{\delta_v}&\leq\re\left(\frac{a'_v + d'_v}{\sqrt{l_{\bn,v}}}\right)<(q_v+1)\sqrt{\delta_v},\qquad\text{$v$ complex}.
\end{align}
By \eqref{subdiv2}, $M_3(\ast,\bn,\bp,\bq)$ vanishes unless $p_v,q_v\ll\delta_v^{-1/2}$, so we shall restrict to $\bp,\bq\in\ZZ^{\{\text{$v$ complex}\}}$ satisfying this condition. If $M_3(\ast,\bn,\bp,\bq)\neq 0$, then
we fix a matrix $\gamma_{\bn,\bp,\bq}=\bigl(\begin{smallmatrix}a_{\bn,\bp,\bq}&b_{\bn,\bp,\bq}\\c&d_{\bn,\bp,\bq}\end{smallmatrix}\bigr)$ counted by $M_3(\ast,\bn,\bp,\bq)$. Any matrix $\gamma=\sabcd$ counted by $M_3(\ast,\bn,\bp,\bq)$ is determined by the differences
\[a'':=a-a_{\bn,\bp,\bq},\qquad d'':=d-d_{\bn,\bp,\bq},\qquad b'':=b-b_{\bn,\bp,\bq}.\]
We remark that with \eqref{diff} and the analogous notation
\[a_{\bn,\bp,\bq}':=a_{\bn,\bp,\bq}-a_\bn,\qquad d_{\bn,\bp,\bq}':=d_{\bn,\bp,\bq}-d_\bn,\qquad b_{\bn,\bp,\bq}':=b_{\bn,\bp,\bq}-b_\bn,\]
we can also write
\[a''=a'-a_{\bn,\bp,\bq}',\qquad d''=d'-d_{\bn,\bp,\bq}',\qquad b''=b'-b_{\bn,\bp,\bq}'.\]
The point is that \eqref{subdiv5}--\eqref{subdiv4} also hold with $(a_{\bn,\bp,\bq}',d_{\bn,\bp,\bq}',b_{\bn,\bp,\bq}')$ in place of $(a',d',b')$. In order to balance out the different sizes of $\delta_v$ at the various archimedean places, we fix a totally positive unit $s\in\mo^\times_+$ such that
(cf.\ Remark~\ref{fundamentaldomainremark})
\[s_v\sqrt{\delta_v} \asymp |\delta|_\infty^{1/(2n)},\qquad v\mid\infty,\]
and we switch to the variables
\[\tilde a:=sa'',\qquad\tilde d:=sd'',\qquad\tilde b:=sb''.\]
By \eqref{subdiv5}, these scaled differences satisfy
\begin{equation}\label{subdiv9}
(\tilde a_v-\tilde d_v)x_v + \tilde b_v \ll L^{j/n}|y|_\infty^{1/n}|\delta|_\infty^{1/(2n)},\qquad v\mid\infty,
\end{equation}
and by \eqref{subdiv1}--\eqref{subdiv4} they also satisfy
\begin{align}
\label{subdiv10}&\tilde a_v-\tilde d_v\ll L^{j/n}|\delta|_\infty^{1/(2n)}\qquad\text{and}\qquad
\tilde a_v+\tilde d_v\ll L^{j/n}s_v,&&\text{$v$ real};\\
\label{subdiv11}&\tilde a_v-\tilde d_v\ll L^{j/n}|\delta|_\infty^{1/(2n)}\qquad\text{and}\qquad
\tilde a_v+\tilde d_v\ll L^{j/n}s_v\sqrt{\delta_v},&&\text{$v$ complex}.
\end{align}
By \eqref{subdiv9} and the first parts of \eqref{subdiv10}--\eqref{subdiv11}, the Euclidean norm of the lattice point $(\tilde a -\tilde d)P+\tilde b\in\Lambda(P)$ (cf.\ \eqref{LambdaP}) can be bounded as
\[\bigl\|(\tilde a -\tilde d)P+\tilde b\bigr\|\ll L^{j/n}|y|_\infty^{1/n}|\delta|_\infty^{1/(2n)},\]
and hence by part (d) of Lemma~\ref{lemma5},
\[\#(\tilde a-\tilde d,\tilde b) \ll 1 + L^j|y|_\infty|\delta|_\infty^{1/2}(\N\mn)^{1/2} + L^{2j}|y|_\infty|\delta|_\infty.\]
In addition, by the second parts of \eqref{subdiv10}--\eqref{subdiv11},
\begin{equation}\label{a+d}
\#(\tilde a+\tilde d) \ll 1+L^j|\delta|_\CC^{1/2}.
\end{equation}
Let us assume $|\delta|_\CC\gg L^{-2j}$ for a moment. Then, as any matrix $\gamma=\sabcd$ counted by $M_3(\ast,\bn,\bp,\bq)$ is determined by the triple $(\tilde a-\tilde d,\tilde b,\tilde a+\tilde d)$, we see that
\[M_3(\ast,\bn,\bp,\bq)\ll
L^j|\delta|_\CC^{1/2}\left(1 + L^j|y|_\infty|\delta|_\infty^{1/2}(\N\mn)^{1/2} + L^{2j}|y|_\infty|\delta|_\infty\right).\]
We combine this bound with \eqref{subdiv6}--\eqref{subdiv8}. Using also part (b) of Lemma~\ref{lemma5}, we obtain
\begin{align}M_3(L, j, \delta)
\notag&\ll\frac{L^j}{|y|_{\infty}(\N\mn)}\frac{L^j|\delta|_\CC^{1/2}}{|\delta|_\CC^{3/4}}
\left(1 + L^j|y|_\infty|\delta|_\infty^{1/2}(\N\mn)^{1/2} + L^{2j}|y|_\infty|\delta|_\infty\right)\\
\notag&\ll\frac{L^{2j}}{|\delta|_\CC^{1/4}}+
\frac{L^{3j}|\delta|_\RR^{1/2}|\delta|_\CC^{1/4}}{(\N\mn)^{1/2}}+\frac{L^{4j}|\delta|_\RR|\delta|_\CC^{3/4}}{\N\mn}\\
\label{subdiv12}&\ll\frac{L^{2j}}{|\delta|_\CC^{1/4}}+\frac{L^{4j}|\delta|_\RR|\delta|_\CC^{3/4}}{\N\mn}.
\end{align}
In the last step, we dropped the middle term, because it is the geometric mean of the other two terms.
The obtained bound is valid under the assumptions \eqref{deltabounds} and $|\delta|_\CC\gg L^{-2j}$.

We shall use \eqref{subdiv12} for $j=1$ only, because for $j=2$ we can do without \eqref{a+d}, hence also without the second parts of \eqref{subdiv10}--\eqref{subdiv11}, by the following observation. For any matrix $\gamma=\sabcd$ counted by $M_3(\ast,\bn)$, the determinant $l=l_1^2l_2^2$ is a square and $(a-d)^2 + 4bc\neq 0$. The identity
\[0\neq (a-d)^2 + 4bc = (a+d)^2 - 4l = (a+d-2l_1l_2)(a+d+2l_1l_2)\]
combined with \eqref{est_5} implies that each pair $(a-d, b)$ gives rise to $\preccurlyeq 1$ choices for $a+d$. Indeed, there are $\preccurlyeq 1$ choices for the ideals $(a+d\pm 2l_1l_2)\mo$ as their product is a fixed nonzero ideal of norm $\ll L^{4}$. Using part (b) of Corollary~\ref{corollary1}, and again keeping in mind~\eqref{est_5}, for each choice of ideals there are $\preccurlyeq 1$ possibilities for their generators $a+d\pm 2l_1l_2\in\mo$, which in turn determine $a+d$. So, in the case of $j=2$, we only need the first parts of \eqref{subdiv10}--\eqref{subdiv11}, along with \eqref{subdiv9}. Hence, instead of $M_3(\ast,\bn,\bp,\bq)$, we consider $M_3(\ast,\bn,\bp)$ defined as the subcount of $M_3(\ast,\bn)$ with the additional condition \eqref{subdiv3}, and we obtain an improved version of \eqref{subdiv12} even without the assumption $|\delta|_\CC\gg L^{-2j}$:
\begin{align}M_3(L, 2, \delta)
\notag&\preccurlyeq\frac{L^2}{|y|_{\infty}(\N\mn)}\frac{1}{|\delta|_\CC^{1/2}}
\left(1 + L^2|y|_\infty|\delta|_\infty^{1/2}(\N\mn)^{1/2} + L^4|y|_\infty|\delta|_\infty\right)\\
\notag&\preccurlyeq\frac{L^2}{|\delta|_\CC^{1/2}}+\frac{L^4|\delta|_\RR^{1/2}}{(\N\mn)^{1/2}}+\frac{L^6|\delta|_\RR|\delta|_\CC^{1/2}}{\N\mn}\\
\label{subdiv13}&\preccurlyeq\frac{L^2}{|\delta|_\CC^{1/2}}+\frac{L^6|\delta|_\RR|\delta|_\CC^{1/2}}{\N\mn}.
\end{align}
In the last step, we dropped the middle term, because it is the geometric mean of the other two terms.
The obtained bound is valid under the assumption \eqref{deltabounds}.

Now we derive \eqref{subdiv14} from \eqref{subdiv12} specialized to $j=1$. If $|\delta|_\CC>L^{-2}|\delta|_\RR^{-1}(\N\mn)$, then \eqref{subdiv12} is applicable, and the second term dominates in it, so \eqref{subdiv14} follows. If $|\delta|_\CC\leq L^{-2}|\delta|_\RR^{-1}(\N\mn)$, then we can replace each $\delta_v$ by some $\delta_v\leq\tilde\delta_v\leq 4$ so that
\[|\tilde\delta|_\RR=|\delta|_\RR\qquad\text{and}\qquad
|\tilde\delta|_\CC=\min\left(16^{r_2},L^{-2}|\delta|_\RR^{-1}(\N\mn)\right).\]
As \eqref{subdiv12} is applicable with $\tilde\delta$ in place of $\delta$, we obtain
\[M_3(L,1,\delta)\leq M_3(L,1,\tilde\delta)\ll\frac{L^2}{|\tilde\delta|_\CC^{1/4}}+\frac{L^4|\delta|_\RR|\tilde\delta|_\CC^{3/4}}{\N\mn}
\ll \frac{L^2}{|\tilde\delta|_\CC^{1/4}} \ll L^2+\frac{L^{5/2}|\delta|_\RR^{1/4}}{(\N\mn)^{1/4}},\]
and \eqref{subdiv14} follows again.

Finally, we derive \eqref{subdiv15} from \eqref{subdiv13}. If $|\delta|_\CC>L^{-4}|\delta|_\RR^{-1}(\N\mn)$, then the second term dominates in \eqref{subdiv13}, so \eqref{subdiv15} follows. If $|\delta|_\CC\leq L^{-4}|\delta|_\RR^{-1}(\N\mn)$, then we can replace each $\delta_v$ by some $\delta_v\leq\tilde\delta_v\leq 4$ so that
\[|\tilde\delta|_\RR=|\delta|_\RR\qquad\text{and}\qquad
|\tilde\delta|_\CC=\min\left(16^{r_2},L^{-4}|\delta|_\RR^{-1}(\N\mn)\right).\]
Applying \eqref{subdiv13} with $\tilde\delta$ in place of $\delta$, we obtain
\[M_3(L,2,\delta)\leq M_3(L,2,\tilde\delta)\preccurlyeq\frac{L^2}{|\tilde\delta|_\CC^{1/2}}+\frac{L^6|\delta|_\RR|\tilde\delta|_\CC^{1/2}}{\N\mn}
\ll \frac{L^2}{|\tilde\delta|_\CC^{1/2}} \ll L^2+\frac{L^4|\delta|_\RR^{1/2}}{(\N\mn)^{1/2}},\]
and \eqref{subdiv15} follows again.
\end{proof}

Our final two lemmas are the main applications of the rigidity Lemma~\ref{aplusd}. Lemma~\ref{j=2} below
is the crucial input for the proof of Theorem~\ref{thm3} and the only point where the congruences in the matrix count imposed by the ideal $\mq$ become relevant. Let $F_0$ be the maximal totally real subfield of $F$, and put $m:=[F:F_0]$.

\begin{lemma}\label{improvement} Suppose that $F$ is not totally real, i.e.\ $m \geq 2$. Then
\[M_3(L,1,\delta)\ll L^2+L^{2m}|\delta|_\RR^{1/2}|\delta|_\CC^{1/4}+\frac{L^{2m+1}|\delta|_\RR|\delta|_\CC^{3/4}}{\N\mn}.\]
\end{lemma}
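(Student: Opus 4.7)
The plan is to enhance the proof of the $j=1$ case of Lemma~\ref{newlemma} (specifically the bound \eqref{subdiv14}) by sharpening the count of values for the trace $\tau := a+d$ through an application of the rigidity Lemma~\ref{aplusd}. Consider the field element
\[
\xi := \frac{\tau^2}{l} = \frac{(a+d)^2}{ad-bc} \in F.
\]
By combining \eqref{est_5} and \eqref{est_9c} with the identity $\im\xi_v = 2\,\re(\tau_v/\sqrt{l_v})\,\im(\tau_v/\sqrt{l_v})$, one obtains $|\xi_v|\ll 1$ at every archimedean place $v$ and $|\im\xi_v|\ll\sqrt{\delta_v}$ at each complex place. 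Moreover the $\mo$-denominator $\ml$ of $\xi\mo=\mk/\ml$ divides $l\mo$, so $\N\ml\leq 4L^2$. Applying Lemma~\ref{aplusd} (with $A$ a fixed constant, and its Remark when $\xi$ generates only a proper subfield) gives the dichotomy: either $\xi\in K$, or $\N\ml\gg|\delta|_{\CC}^{-1/(4(m-1))}$. This is the sole point where the hypothesis $m\geq 2$ is used, since only then does $|\delta|_{\CC}$ genuinely measure closeness to totally real.

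From this dichotomy I would follow the scheme of \eqref{subdiv14}: fix $c\neq 0$ (with $\#c\ll L/(|y|_{\infty}\N\mn)$), bin by $\bn$ and $\bp$ (for the complex-place arguments of $l$ and $\sigma=a-d$), and count $(\tilde\sigma,\tilde b)$ via the lattice $\Lambda(P)$ as in Lemma~\ref{newlemma}. The key modification is to replace the coarse box count $\#(\tilde a+\tilde d)\ll 1+L|\delta|_{\CC}^{1/2}$ by a refined estimate derived via $\xi$. In the case $\xi\in K$, the constraint $|\xi_v|\ll 1$ translates into a box constraint in $K_\infty\cong\RR^{n/m}$, and counting via part~(a) of Corollary~\ref{corollary1} applied over $K$ (noting that the $\mo_K$-denominator has norm $\ll L^{2/m}$) gives $\ll L^{4/m}$ values of $\xi$, each contributing at most two values $\tau=\pm\sqrt{\xi l}$. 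In the case $\xi\notin K$, a dyadic sum over admissible denominators $\ml$ with $|\delta|_{\CC}^{-1/(4(m-1))}\ll\N\ml\leq 4L^2$ of the lattice-point counts $\ll 1+\N\ml\cdot|\delta|_{\CC}^{1/4}$ in $\ml^{-1}\subset F_\infty$ produces the new key factor of $L^{2(m-1)}$ beyond the exponents already present in Lemma~\ref{newlemma}. Combining these with the $(\tilde\sigma,\tilde b)$ count and invoking part~(b) of Lemma~\ref{lemma5} to bound $|y|_{\infty}\gg(\N\mn)^{-1}$ should give the claimed inequality.

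The hardest step will be the careful dyadic bookkeeping in the case $\xi\notin K$, ensuring that the exponents $L^{2m}$ and $L^{2m+1}$ emerge cleanly from the interplay between the rigidity lower bound on $\N\ml$, the upper bound $\N\ml\leq 4L^2$, and the specific arithmetic structure of $\ml$ (namely that it divides a product of at most two primes from $P(L)$, whose norms are $\asymp L$). A further subtle point is the subproduct $|\delta|_{\CC}''$ appearing in the Remark to Lemma~\ref{aplusd}; this may be bypassed by splitting off each intermediate subfield $K\subsetneq K(\xi)\subsetneq F$ (with $2\leq[K(\xi):K]<m$) and counting $\xi$ directly in $K(\xi)_\infty$, whose archimedean dimension is strictly smaller than that of $F_\infty$ and which yields an estimate analogous to the $K$-case.
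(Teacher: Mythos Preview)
Your plan to feed $\xi=(a+d)^2/l$ into Lemma~\ref{aplusd} is natural, but the step in which you ``dyadically sum over admissible denominators $\ml$'' does not close. For fixed $(c,\sigma,b)$ one has $\xi-4=D/l$ with $D=(a-d)^2+4bc$ fixed, so the denominator of $\xi$ is essentially $l\mo$ itself; as $l=l_1l_2$ ranges over pairs from $P(L)$ there are $\asymp L^2$ possible $\ml$'s, and summing even the idealized bound $1+\N\ml\,|\delta|_\CC^{1/4}$ over these already yields $\asymp L^2+L^4|\delta|_\CC^{1/4}$, a factor $L^2$ too large (compare what is needed at $m=2$). The rigidity output --- a \emph{lower} bound on $\N\ml$ --- therefore does not cut down $\#\xi$. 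Moreover the thin-box count $\#\{\xi\in\ml^{-1}:|\xi_v|\ll 1,\ |\im\xi_v|\ll\sqrt{\delta_v}\}\ll 1+\N\ml\,|\delta|_\CC^{1/4}$ is not available in general: a fractional ideal $\ml^{-1}$ has no reason to be adapted to a box thin in the imaginary directions, and units of $F$ cannot rebalance real and imaginary parts separately (this is precisely the obstruction that Lemma~\ref{aplusd} is designed to exploit, not circumvent).

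The paper applies rigidity in a different and sharper way. After fixing $c$, it first splits off the case $|2cx-(a-d)|_\infty\leq 1$ (where $\#(a-d)\ll 1$ and the count proceeds as in Lemma~\ref{newlemma}). In the complementary range it localises $|2c_vx_v-a_v+d_v|$ dyadically by a vector $\bz$; then \eqref{est_8c} confines $\arg(l_v)$ at each complex place to an interval of length $\asymp z_v\sqrt{\delta_v}$ depending only on $c$ and $a-d$. These intervals are subdivided into $N$ pieces with $N^{r_2}\asymp 1+L^{2(m-1)}|\delta|_\CC^{1/4}$, and Lemma~\ref{aplusd} is applied to the \emph{ratio} $\xi=l'/l$ of two candidate determinants in the same sub-box. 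If $K(l'/l)\subsetneq F$, the totally-split structure of $P(L)$ forces $l'/l\in\mo^\times$ and hence $l'=l$; if $K(l'/l)=F$, the rigidity inequality is incompatible with the fineness of the subdivision. Thus each sub-box contains at most one determinant, giving $\#l\ll|\bz|_\CC^{1/2}\bigl(1+L^{2(m-1)}|\delta|_\CC^{1/4}\bigr)$ with no sum over denominators at all. Counting $b$ afterwards via \eqref{est_6} and summing over $\bz$ yields the lemma. The decisive point is that rigidity applied to a ratio gives \emph{uniqueness}, which is a much stronger output than a lower bound on a denominator.
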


\begin{remark}
Our proof actually shows that this bound holds for $M_1(L,1,\delta)$ in place of $M_3(L,1,\delta)$, but we preferred the current formulation for harmony. The result is the precise analogue of \cite[Subsection~11.1]{BHM}, but the present proof is very different and applies to all number fields without any special treatment of CM-fields. The bound is sharp for very small distances and is responsible for a strong exponent of $|T|_{\infty}$ in Theorem~\ref{thm3}. In fact, by arguing similarly as in Lemma~\ref{j=2} with $F_1:=F_0\big((a+d)^2/l\big)$, we can prove that $M_3(L,1,\delta)=0$ unless $1\ll L^{8(m-1)}|\delta|_\CC$. We do not see how to exploit this conclusion in the context of estimating $M_3(L,1,\delta)$ in the endgame in Section~\ref{endgame}, so we omit this statement and its proof.
\end{remark}

\begin{proof}
The proof shares several common elements with the proof of Lemma~\ref{newlemma}, so we shall be brief at certain points.
If $M_3(L,1,\delta)$ vanishes, then the stated bound holds trivially. Otherwise, we fix some $c\neq 0$ that occurs in $M_3(L,1,\delta)$.
By \eqref{est_3}, the number of possibilities for $c$ satisfies \eqref{subdiv6}, where we set $j:=1$. We denote by $M_3(L,1,\delta,c)$ the subcount of $M_3(L,1,\delta)$ with the given $c$, and we split it as
\begin{equation}\label{subdivnew1}
M_3(L, 1, \delta, c)=M_3(\ast)=M_3(\ast,0)+M_3(\ast,1),
\end{equation}
where $\ast$ abbreviates ``$L, 1, \delta, c$'', and the subsums $M_3(\ast,0)$ and $M_3(\ast,1)$ refer to two complementary ranges described in \eqref{subdivnew2} and \eqref{subdivnew4} below.

Specifically, $M_3(\ast,0)$ denotes the subcount of $M_3(\ast)$ satisfying the additional condition
\begin{equation}\label{subdivnew2}
|2cx-a+d|_\infty\leq 1.
\end{equation}
In order to estimate $M_3(\ast,0)$, we shall subdivide it as
\[M_3(\ast,0)=\sum_{\bn,\bq} M_3(\ast,0,\bn,\bq),\]
where $\bn,\bq\in\ZZ^{\{\text{$v$ complex}\}}$ are as in the proof of Lemma~\ref{newlemma}, and $M_3(\ast,0,\bn,\bq)$ denotes the subcount of $M_3(\ast,0)$ satisfying \eqref{ncond} and \eqref{subdiv4}. By \eqref{subdivnew2}, the number of choices for $a-d$ is $\ll 1$. For fixed $c$, $a-d$, $\bn$, $\bq$, \eqref{subdiv9} and \eqref{a+d} give
\[\#b \ll 1+L|y|_\infty|\delta|_\infty^{1/2}\qquad\text{and}\qquad \#(a+d)\ll 1+L|\delta|_\CC^{1/2}.\]
Let us assume $|\delta|_\CC\geq L^{-2}$ for a moment. As the triple $(a-d,b,a+d)$ determines the matrix $\gamma=\sabcd$ counted by $M_3(\ast,0,\bn,\bq)$, we see that
\begin{equation}\label{subdivnew3}
M_3(\ast,0)=\sum_{\bn,\bq}M_3(\ast,0,\bn,\bq)\ll\frac{L|\delta|_\CC^{1/2}}{|\delta|_\CC^{1/2}}\left(1+L|y|_\infty|\delta|_\infty^{1/2}\right)
=L+L^2|y|_\infty|\delta|_\infty^{1/2}.
\end{equation}
For a general $\delta=(\delta_v)_{v\mid\infty}$, we obtain
\begin{equation}\label{subdivnew7}
M_3(\ast,0)\ll L+L^2|y|_\infty|\delta|_\infty^{1/2}+L|y|_\infty|\delta|_\RR^{1/2}.
\end{equation}
Indeed, if $|\delta|_\CC\geq L^{-2}$, then this is obvious by \eqref{subdivnew3}. If $|\delta|_\CC<L^{-2}$, then we can replace each $\delta_v$ by some $\delta_v\leq\tilde\delta_v\leq 4$ so that $|\tilde\delta|_\RR=|\delta|_\RR$ and $|\tilde\delta|_\CC=L^{-2}$. As \eqref{subdivnew3} is applicable with $\tilde\delta$ in place of $\delta$, and the left hand side of \eqref{subdivnew3} is non-decreasing in $|\delta|_\CC$, the bound \eqref{subdivnew7} follows again.

We turn to the analysis of $M_3(\ast,1)$, which we define as the subcount of $M_3(\ast)$ satisfying the additional condition
\begin{equation}\label{subdivnew4}
|2cx-a+d|_\infty>1.
\end{equation}
We subdivide $M_3(\ast,1)$ in terms of \emph{dyadic vectors} $\bz=(z_v)\in\NN^{\{v\mid\infty\}}$,
\begin{equation}\label{subdivnew1b}
M_3(\ast,1)=\sum_\bz M_3(\ast,1,\bz),
\end{equation}
where $M_3(\ast,1,\bz)$ denotes the subcount of $M_3(\ast,1)$ with the additional condition
\begin{equation}\label{subdivnew5}
\frac{1}{2}<\frac{z_v|2c_vx_v-a_v+d_v|}{C_1|l_v|^{1/2}\sqrt{\delta_v}}\leq 1\ \ \text{for $v$ real,}\qquad\quad
\frac{1}{2}<\frac{z_v|2c_vx_v-a_v+d_v|}{C_1|l_v|^{1/2}}\leq 1\ \ \text{for $v$ complex.}
\end{equation}
Here, $C_1>0$ is the maximum of the two implied constants in \eqref{est_4} and \eqref{est_8c}, so that \eqref{subdivnew1b} holds with dyadic vectors $\bz\in\NN^{\{v\mid\infty\}}$. By \eqref{subdivnew4} and \eqref{subdivnew5}, either $M_3(\ast,1,\bz)$ is empty or
$|\bz|_\infty\ll L|\delta|_\RR^{1/2}$, and hence by $l_v\asymp L^{2/n}$ the number of choices for $a-d$ is
\begin{equation}
\label{impr_choicesforaminusd}
\#(a-d)\ll L|\delta|^{1/2}_{\RR}|\bz|_{\infty}^{-1}.
\end{equation}
Fixing $a-d$ and combining \eqref{est_8c} with \eqref{subdivnew5}, a moment of thought gives that
\begin{equation}
\label{interval}
\arg(l_v)\equiv\pi+2\arg(2c_vx_v-a_v+d_v)+O(z_v\sqrt{\delta_v})\pmod{2\pi},\qquad\text{$v$ complex}.
\end{equation}

At this point the stage is set for the application of realness rigidity, Lemma~\ref{aplusd}. We write
\[N:=\left\lceil\Bigl(C_2L^{2(m-1)}|\delta|_{\CC}^{1/4}\Bigr)^{1/r_2}\right\rceil,\]
where $C_2>0$ is a sufficiently large constant (depending only on $F$) to be chosen shortly. Inequality \eqref{interval} states that
$\arg(l_v)$ belongs to a fixed interval of length $\ll z_v\sqrt{\delta_v}$ for every complex place $v$. We split each of these intervals into subintervals $I(v,j_v)$ of length at most $\sqrt{\delta_v}/N$, where $j_v\ll z_vN$ is a positive integer. Then, writing $\bj:=(j_v)$, the total number of combinations of these subintervals at the various complex places can be bounded as
\begin{equation}
\label{impr_choicesforintervals}
\#\bj\ll |\bz|_{\CC}^{1/2}N^{r_2}\ll |\bz|_{\CC}^{1/2}\left(1+L^{2(m-1)}|\delta|_{\CC}^{1/4}\right).
\end{equation}
We claim that for each combination $\bj$, there exists at most one determinant $l=l_1l_2$ with $l_1,l_2\in P(L)$ such that $\arg(l_v)\in I(v,j_v)$ for all complex places $v$. Indeed, let $l=l_1l_2$ and $l'=l'_1l'_2$ be any two such determinants, and let $\xi:=l'/l$ be their ratio. Then, $\arg(l_v)$ and $\arg(l'_v)$ differ by at most $\sqrt{\delta_v}/N$ at every complex place $v$, and hence by the definition of $P(L)$ in Subsection~\ref{napart},
\[\xi_v\ll 1\quad\text{and}\quad \im \xi_v \ll \sqrt{\delta_v}/N,\qquad v\mid\infty.\]
Consider the number field $F_1:=F_0(\xi)$. If $F=F_1$, then Lemma~\ref{aplusd} is applicable, and we obtain
\[1\ll L^{8(m-1)}|\delta|_\CC/N^{4r_2}\leq 1/C_2^{4}.\]
By choosing the constant $C_2>0$ to be sufficiently large, this inequality is impossible, hence $F_1$ is a \emph{proper} subfield of $F$. We claim that $\xi\in\mo$. We can write $\xi\mo_{F_1}$ as a ratio $\ma/\mb$ of nonzero coprime ideals $\ma,\mb\subseteq\mo_{F_1}$ in $F_1$. Then $\xi\mo$ is the ratio $(\ma\mo)/(\mb\mo)$ of the nonzero coprime ideals $\ma\mo,\mb\mo\subseteq\mo$ in $F$, and therefore $\mb\mo$ divides $l\mo$. By the definition of $P(L)$, $l\mo$ is a product of totally split prime ideals such that distinct prime ideal factors lie above distinct rational primes, hence we infer that $\mb\mo=\mo$. As a result, $\xi\in\ma\mo\subseteq\mo$ is an integer as claimed. By switching the roles $l$ and $l'$, we also see that $\xi^{-1}\in\mo$, i.e.\ $\xi\in\mo^\times$ is a unit. However, again by the definition of $P(L)$, this forces $\xi=1$, i.e.\ $l=l'$.

By the previous paragraph, we have (cf.\ \eqref{impr_choicesforintervals})
\[\# l\leq \#\bj \ll |\bz|_{\CC}^{1/2}\left(1+L^{2(m-1)}|\delta|_{\CC}^{1/4}\right).\]
Moreover, by \eqref{est_6}, for each fixed pair $(a-d,l)$ we have (noting that $c$ is also fixed)
\[\#b \ll 1+L|y|_\infty|\delta|_\infty^{1/2}.\]
As the triple $(a-d,l,b)$ determines the matrix $\gamma=\sabcd$ counted by $M_3(\ast,1,\bz)$ up to two choices, we see by
\eqref{impr_choicesforaminusd} and our last two bounds that
\[M_3(\ast,1,\bz)\ll \frac{L|\delta|^{1/2}_{\RR}}{|\bz|_{\RR}|\bz|_{\CC}^{1/2}}
\left(1+L^{2(m-1)}|\delta|_{\CC}^{1/4}\right)\left(1+L|y|_\infty|\delta|_\infty^{1/2}\right).\]
Summing up over all dyadic vectors $\bz\in\NN^{\{v\mid\infty\}}$, we infer
\begin{equation}\label{subdivnew6}
M_3(\ast,1)\ll \left(L+L^{2m-1}|\delta|_\RR^{1/2}|\delta|_\CC^{1/4}\right)\left(1+L|y|_\infty|\delta|_\infty^{1/2}\right).
\end{equation}

Finally, combining \eqref{subdiv6}, \eqref{subdivnew1}, \eqref{subdivnew7}, \eqref{subdivnew6},
and using also part (b) of Lemma~\ref{lemma5}, we obtain
\begin{align*}
M_3(L,1,\delta)
\ll&\ \frac{L}{|y|_{\infty}(\N\mn)}\left(L+L^2|y|_\infty|\delta|_\infty^{1/2}+L|y|_\infty|\delta|_\RR^{1/2}\right)\\
+&\ \frac{L}{|y|_{\infty}(\N\mn)}\left(L+L^{2m-1}|\delta|_\RR^{1/2}|\delta|_\CC^{1/4}\right)\left(1+L|y|_\infty|\delta|_\infty^{1/2}\right)\\
\ll&\ L^2+L^{2m}|\delta|_\RR^{1/2}|\delta|_\CC^{1/4}+\frac{L^{2m+1}|\delta|_\RR|\delta|_\CC^{3/4}}{\N\mn}.
\end{align*}
The proof is complete.
\end{proof}

\begin{lemma}\label{j=2} We have $M_3(L,2,\delta)=0$ unless
\begin{equation}\label{unlessconclusion}
1 \ll L^{8(m-1)}|\delta|_\CC.
\end{equation}
\end{lemma}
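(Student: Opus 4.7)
The plan is to apply Lemma~\ref{aplusd} to the element
\[\xi:=\tr(\gamma)/\sqrt{\det(\gamma)}=(a+d)/(l_1l_2)\in F\]
associated with any matrix $\gamma=\sabcd$ counted by $M_3(L,2,\delta)$, where $\det\gamma=l_1^2l_2^2$ with $l_1,l_2\in P(L)$. Non-parabolicity of $\gamma$ gives $\xi\neq\pm 2$. First I would verify the hypotheses of the lemma: from \eqref{est_5} and \eqref{est_9c}, together with $|l_v|^{1/2}\asymp L^{2/n}$, I get $|\xi_v|\ll 1$ and $|\im\xi_v|\ll\sqrt{\delta_v}$ at every archimedean place $v$; writing $\xi\mo=\mk/\ml$ with coprime integral ideals, the relation $\xi l_1l_2=a+d\in\mo$ forces $\ml\mid l_1l_2\mo$, so $\N\ml\ll L^2$.

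Next I would split on the intermediate field $\hat K:=K(\xi)$ between $K$ and $F$. In the generic case $\hat K=F$, Lemma~\ref{aplusd} applies directly with $A\asymp 1$ and $\N\ml\ll L^2$, and its conclusion is precisely the desired $1\ll L^{8(m-1)}|\delta|_\CC$.

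In the remaining case $\hat K\subsetneq F$, where direct application of the lemma (via its generalized form in the remark) would yield only a weaker bound, the plan is to rule out such matrices altogether, so that the conclusion holds vacuously. I would first run the denominator analysis from the tail of the proof of Lemma~\ref{improvement}: because $l_1\mo,l_2\mo$ are totally split principal primes lying over distinct rational primes, and because every prime of $\mo_{\hat K}$ above a totally split prime of $\QQ$ extends into $[F:\hat K]\geq 2$ distinct primes of $\mo$, the condition $\ml\mid l_1l_2\mo$ is incompatible with any choice $\ml\neq\mo$. Hence $\ml=\mo$, i.e., $\xi\in\mo_{\hat K}\subseteq\mo$.

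The final step is a congruence argument exploiting the structure of $K^\flat$. The defining conditions $a-d\in\mq$, $b\in\theta_i\mo$, $c\in\theta_i^{-1}(\mn\cap\mq)$ in $\Gamma(i,l_1^2l_2^2)$, combined via the identity $(a+d)^2-4l_1^2l_2^2=(a-d)^2+4bc$ with the relation $l_1l_2\in 1+\mq$, yield $\xi^2\equiv 4\pmod{\mq}$, so $\xi^2-4\in\mq$. If $\xi\neq\pm 2$, then $\xi^2-4$ is a nonzero element of $\mq$ satisfying $|N_{F/\QQ}(\xi^2-4)|\geq\N\mq\geq C$, while $|\xi_v|\ll 1$ at every $v\mid\infty$ forces $|N_{F/\QQ}(\xi^2-4)|\ll 1$. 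Choosing the constant $C$ in~\eqref{sizeq} sufficiently large furnishes the contradiction, so $\xi=\pm 2$, contradicting non-parabolicity. The main obstacle is precisely this subfield case, where the argument synthesizes the arithmetic design of the amplifier $P(L)$ (to force $\xi$ to be an integer) with the congruences modulo $\mq$ built into $K^\flat$ (to pin the residue class of $\xi$ to $\pm 2$); the statement of Lemma~\ref{j=2} is then the precise quantitative record of when the tandem of realness rigidity and lattice discreteness becomes sharp enough to collapse $\xi$ to $\pm 2$.
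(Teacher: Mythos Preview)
Your proposal is correct and follows essentially the same route as the paper's proof: define $\xi=(a+d)/(l_1l_2)$, use \eqref{est_5} and \eqref{est_9c} to bound $|\xi_v|$ and $|\im\xi_v|$, split on whether $K(\xi)=F$ or not, apply Lemma~\ref{aplusd} in the former case, and in the latter case use the totally-split design of $P(L)$ to force $\xi\in\mo$ and then the $\mq$-congruences together with \eqref{sizeq} to reach a contradiction with nonparabolicity. The only cosmetic difference is that you first write $\xi\mo=\mk/\ml$ in $\mo$ and then invoke the $\hat K$-structure to argue $\ml=\mo$, whereas the paper works directly with $\xi\mo_{\hat K}=\ma/\mb$; since $\ml=\mb\mo$ by uniqueness of the coprime fractional-ideal representation, these are the same argument.
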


\begin{proof} Assume that $\gamma=\sabcd\in\GL_2(F)$ is a matrix counted by $M_3(L,2,\delta)$. Recall that the determinant is of the form $l=l_1^2l_2^2$ with $l_1,l_2\in P(L)$. We observe that
\begin{equation}\label{j=2a}
\xi := \frac{a+d}{l_1l_2} \in F
\end{equation}
satisfies, by \eqref{est_5b} and \eqref{est_9c},
\begin{equation}\label{xiwbounds}
|\xi_v|<17\quad\text{and}\quad \im \xi_v \ll \sqrt{\delta_v},\qquad v\mid\infty.
\end{equation}
Consider the number field $F_1:=F_0(\xi)$. If $F=F_1$, then Lemma~\ref{aplusd} is applicable, and we obtain \eqref{unlessconclusion} from \eqref{j=2a} and \eqref{xiwbounds}. Otherwise, $F_1$ is a \emph{proper} subfield of $F$. We claim that $\xi\in\mo$. If $\xi=0$, then the claim is trivial. If $\xi\neq 0$, then we can write $\xi\mo_{F_1}$ as a ratio $\ma/\mb$ of nonzero coprime ideals $\ma,\mb\subseteq\mo_{F_1}$ in $F_1$. Then $\xi\mo$ is the ratio $(\ma\mo)/(\mb\mo)$ of the nonzero coprime ideals $\ma\mo,\mb\mo\subseteq\mo$ in $F$, and therefore $\mb\mo$ divides $l_1l_2\mo$. By the definition of $P(L)$ in Subsection~\ref{napart}, $l_1l_2\mo$ is a product of totally split prime ideals such that distinct prime ideal factors lie above distinct rational primes, hence we infer that $\mb\mo=\mo$. As a result, $\xi\in\ma\mo\subseteq\mo$ is an integer as claimed. By $a-d\in\mq$ and $bc\in\mq$ (cf.\ \eqref{Rm}), we even see that $\xi^2-4=((a-d)^2+4bc)/l\in\mq$. Now $\xi^2-4\in\mq$ is nonzero (because $\gamma$ is nonparabolic), hence its norm $|\xi^2-4|_\infty$ is at least $\N\mq$. However, this is impossible by \eqref{sizeq} and \eqref{xiwbounds} as the following short calculation shows:
\[300^n\leq\N\mq\leq|\xi^2-4|_\infty=\prod_{v\mid\infty} {|\xi_v^2-4|}_v<\prod_{v\mid\infty}|17^2+4|_v=293^n.\]
The proof is complete.
\end{proof}

\section{The endgame}\label{endgame}

Theorems~\ref{thm2} and \ref{thm3} are trivial when $|T|_\infty(\N\mn)$ is bounded, hence we can assume that $|T|_\infty(\N\mn)$ is sufficiently large in terms of the number field $F$. We recall \eqref{ampbound3} in the form
\begin{equation}\label{simpleampbound}
|\phi(g)|^2\preccurlyeq (\N\mn)
\sup_{\substack{\delta =(\delta_v)_{v\mid\infty}\\T_v^{-2}\leq\delta_v\leq 4\text{ for all } v \mid \infty}}
\frac{|T|_\infty^{1/2}}{|\delta|_\infty^{1/4}}\left( \frac{ M(L,0,\delta)}{L} +\frac{M(L,1,\delta)}{L^3}+\frac{M(L,2,\delta)}{L^4}\right),
\end{equation}
where $L$ is an arbitrary amplifier length satisfying \eqref{sizeL}. We note that the vector $\delta$ satisfies
\begin{equation}\label{deltaT}
|T|_{\RR}^{-2}\leq |\delta|_{\RR}, \qquad |T|_{\CC}^{-2}\leq |\delta|_{\CC}.
\end{equation}

First we prove Theorem~\ref{thm2}. For $j\in\{1,2\}$ we have:
\begin{align*}
&M(L,0,\delta)\ll 1 + |y|_{\infty}|\delta|_{\infty}^{1/2}&&\text{by Lemma~\ref{prelimbound}},\\
&M_0(L,j,\delta)\preccurlyeq L^2 + L^{2+j} |y|_{\infty}|\delta|_{\infty}^{1/2}&&\text{by Lemma~\ref{lemma10}},\\
&M_2(L,j,\delta)\preccurlyeq L^{3j} |\delta|_{\RR}^{3/4} |\delta|_{\CC}^{1/4} (\N\mn)^{-1}&&\text{by Lemma~\ref{parabolic-real}},\\
&M_3(L,1,\delta)\ll L^2+L^{5/2} |\delta|_{\RR}^{1/4} (\N\mn)^{-1/4}+L^{4}|\delta|_{\RR}|\delta|_{\CC}^{3/4} (\N\mn)^{-1}&&\text{by Lemma~\ref{newlemma}},\\
&M_3(L,2,\delta)\preccurlyeq L^2+L^{4} |\delta|^{1/2}_{\RR} (\N\mn)^{-1/2}+L^{6}|\delta|_{\RR}|\delta|_{\CC}^{1/2} (\N\mn)^{-1}&&\text{by Lemma~\ref{newlemma}}.
\end{align*}
From \eqref{simpleampbound} and \eqref{deltaT} we infer
\[|\phi(g)|^2 \preccurlyeq (\N\mn) \left(\frac{|T|_{\infty}}{L} + |T|_{\infty}^{1/2}|y|_{\infty} + \frac{L^2 |T|_{\infty}^{1/2}}{\N\mn} + \frac{ |T|_{\RR}^{1/2} |T|_{\CC}}{(\N \mn)^{1/2}}\right).\]
We equate the first and third term by choosing $L:=|T|_{\infty}^{1/6} (\N\mn)^{1/3}$, and then our bound becomes
\begin{equation}\label{thm1stronger}
\phi(g)\preccurlyeq |T|_{\infty}^{5/12} (\N\mn)^{1/3} + |T|_{\RR}^{1/4} |T|_{\CC}^{1/2} (\N\mn)^{1/4}
\end{equation}
as long as $|y|_{\infty} \leq |T|_{\infty}^{1/3}(\N\mn)^{-1/3}$, but \eqref{thm1stronger} remains true in the opposite case by Lemma~\ref{lemma7}.
This completes the proof of Theorem~\ref{thm2}.

We prove Theorem~\ref{thm3} following a similar strategy. For $j\in\{1,2\}$ we have:
\begin{align*}
&M(L,0,\delta)\ll 1 + |y|_{\infty}|\delta|_{\infty}^{1/2}&&\text{by Lemma~\ref{prelimbound}},\\
&M_0(L,j,\delta)\preccurlyeq L^2 + L^{2+j} |y|_{\infty}|\delta|_{\infty}^{1/2}&&\text{by Lemma~\ref{lemma10}},\\
&M_2(L,j,\delta)\preccurlyeq L^{3j} |\delta|_{\RR}^{3/4} |\delta|_{\CC}^{1/4} (\N\mn)^{-1}&&\text{by Lemma~\ref{parabolic-real}},\\
&M_3(L,1,\delta)\ll L^2+L^{2m}|\delta|_\RR^{1/2}|\delta|_\CC^{1/4}+L^{2m+1}|\delta|_\RR|\delta|_\CC^{3/4}(\N\mn)^{-1}&&\text{by Lemma~\ref{improvement}},\\
& M_3(L, 2, \delta) \preccurlyeq L^2 + L^{2m+2}|\delta|^{1/2}_{\RR} |\delta|_{\CC}^{1/4} (\N\mn)^{-1/2} + L^6 |\delta|_{\RR} |\delta|_{\CC}^{1/2}(\N \mn)^{-1}&&\text{by Lemmata~\ref{newlemma} and \ref{j=2}},
\end{align*}
where $m = [F:F_0] \geq 2$, and we inserted the factor $L^{2(m-1)}|\delta|_{\CC}^{1/4} \gg 1$ artificially (cf.\ \eqref{unlessconclusion}) in the second term on the last line. From \eqref{simpleampbound} and \eqref{deltaT} we infer
\[|\phi(g)|^2 \preccurlyeq (\N\mn)
\left(\frac{|T|_{\infty}}{L} + |T|^{1/2}_{\infty}\left(|y|_{\infty}+ L^{2m-3} + \frac{L^{2m-2}}{(\N\mn)^{1/2}}\right) \right).\]
Introducing the constant $C_3:=2^nC_0$ (cf.\ \eqref{sizeL}) and choosing
\[L:=\min\left(\bigl(|T|_{\infty}\N\mn\bigr)^{\frac{1}{4m-2}}, C_3|T|_{\infty}^{\frac{1}{4m-4}}\right),\]
we obtain
\begin{equation}\label{thm2final}
\phi(g) \preccurlyeq \bigl(|T|_{\infty}\N\mn\bigr)^\frac{1}{2}
\left(\bigl(|T|_{\infty}\N\mn\bigr)^{-\frac{1}{8m-4}} + |T|_{\infty}^{-\frac{1}{8m-8}}\right)
\end{equation}
as long as $|y|_{\infty}\leq|T|_{\infty}^{1/4}$, but \eqref{thm2final} remains true in the opposite case by Lemma~\ref{lemma7}.
Using also \eqref{thm1stronger}, we can strengthen \eqref{thm2final} to
\[\phi(g) \preccurlyeq \bigl(|T|_{\infty}\N\mn\bigr)^\frac{1}{2}
\left(\bigl(|T|_{\infty}\N\mn\bigr)^{-\frac{1}{8m-4}} + \min\left(|T|_{\infty}^{-\frac{1}{8m-8}}, (\N\mn)^{-\frac{1}{4}}\right)\right).\]
However,
\[\min\left(|T|_{\infty}^{-\frac{1}{8m-8}}, (\N\mn)^{-\frac{1}{4}}\right) \leq \left(|T|_{\infty}^{-\frac{1}{8m-8}}\right)^{\frac{2m-2}{2m-1}} \left((\N\mn)^{-\frac{1}{4}}\right)^{\frac{1}{2m-1}} = \bigl(|T|_{\infty} \N\mn\bigr)^{-\frac{1}{8m-4}},\]
hence in fact our bound simplifies to
\[\phi(g) \preccurlyeq \bigl(|T|_{\infty}\N\mn\bigr)^{\frac{1}{2}-\frac{1}{8m-4}}.\]
This completes the proof of Theorem~\ref{thm3}.

\end{document}